\renewcommand{\eqref}[1]{\hyperref[#1]{(\ref{#1})}}
\newlist{enumlist}{enumerate}{1}
\setlist[enumlist]{labelindent=0cm,label=\arabic*.,labelwidth=2.5ex,labelsep=0.5ex,leftmargin=3ex,align=left,topsep=0.5ex,itemsep=1ex,parsep=1ex}
\newlist{itemlist}{itemize}{1}
\setlist[itemlist]{labelindent=0cm,label=$\bullet$,labelwidth=2.5ex,labelsep=0.5ex,leftmargin=3ex,align=left,topsep=0.5ex,itemsep=1ex,parsep=1ex}
\numberwithin{equation}{section}
\theoremstyle{definition}\newtheorem{definition}{Definition}[section]
\newtheorem*{definition*}{Definition}
\newtheorem{notation}[definition]{Notation}
\newtheorem{remark}[definition]{Remark}
\newtheorem{example}[definition]{Example}
\newtheorem*{example*}{Example}
\newtheorem*{examples*}{Examples}
\newtheorem{proposition}[definition]{Proposition}
\newtheorem{lemma}[definition]{Lemma}
\newtheorem{theorem}[definition]{Theorem}
\newtheorem{corollary}[definition]{Corollary}
\newtheorem*{corollary*}{Corollary}
\newtheorem{letterthm}{Theorem}
\theoremstyle{definition}}
\newcommand{\C}{\mathbb{C}}
\newcommand{\cC}{\mathcal{C}}
\newcommand{\ot}{\otimes}
\newcommand{\Z}{\mathbb{Z}}
\newcommand{\N}{\mathbb{N}}
\newcommand{\Tr}{\operatorname{Tr}}
\newcommand{\E}{\mathbb{E}}
\newcommand{\Ad}{\operatorname{Ad}}
\newcommand{\cF}{\mathcal{F}}
\newcommand{\T}{\mathbb{T}}
\newcommand{\cB}{\mathcal{B}}
\newcommand{\cW}{\mathcal{W}}
\newcommand{\cM}{\mathcal{M}}
\newcommand{\cL}{\mathcal{L}}
\newcommand{\cE}{\mathcal{E}}
\newcommand{\cFC}{\mathcal{FC}}
\newcommand{\cP}{\mathcal{P}}
\newcommand{\cT}{\mathcal{T}}
\newcommand{\bP}{\mathbb{P}}
\newcommand{\G}{\mathbb{G}}
\newcommand{\FT}{\mathbb{FT}}
\DeclareMathOperator{\spann}{span}
\DeclareMathOperator{\End}{End}
\DeclareMathOperator{\mult}{mult}
\DeclareMathOperator{\TL}{TL}
\DeclareMathOperator{\FC}{FC}
\DeclareMathOperator{\Part}{Part}
\DeclareMathOperator{\Mo}{Mo}
\DeclareMathOperator{\Ba}{Ba}
\DeclareMathOperator{\Ind}{Ind}
\DeclareMathOperator{\Wk}{Wk}
\begin{document}

\begin{center}
{\boldmath\Large\bf Traces On Diagram Algebras I: \\
Free Partition Quantum Groups, Random Lattice Paths And Random Walks On Trees}

\bigskip

{\sc by Jonas Wahl\footnote{\noindent Hausdorff Center for Mathematics, Bonn (Germany).\\ E-mail: wahl@iam.uni-bonn.de.}}

\end{center}

\begin{abstract}
\noindent We classify extremal traces on the seven direct limit algebras of noncrossing partitions arising from the classification of free partition quantum groups of Banica-Speicher \cite{BS09} and Weber \cite{We13}. For the infinite-dimensional Temperley-Lieb-algebra (corresponding to the quantum group $O^+_N$) and the Motzkin algebra ($B^+_N$), the classification of extremal traces implies a classification result for well-known types of central random lattice paths. For the $2$-Fuss-Catalan algebra ($H_N^+$) we solve the classification problem by computing the \emph{minimal or exit boundary} (also known as the \emph{absolute}) for central random walks on the Fibonacci tree, thereby solving a probabilistic problem of independent interest, and to our knowledge the first such result for a nonhomogeneous tree. In the course of this article, we also discuss the branching graphs for all seven examples of free partition quantum groups, compute those that were not already known, and provide new formulas for the dimensions of their irreducible representations.
\end{abstract}

\section{Introduction}

The classification problem for extremal traces on direct limits of finite-dimensional $C^*$-algebras has a decennia long history in mathematics that is intertwined with many different fields such as probability theory, symmetric functions, $K$-theory or representation theory \cite{BO16} just to name a few. For instance, Thoma's classification of extremal traces on $\C[S_{\infty}]$ \cite{Th64}, the group algebra of the infinite symmetric group, has spurned a vast literature of beautiful results ranging from applications to determinantal point processes \cite{BO98} \cite{O03} to free probability \cite{Bn98} to solutions of the Yang-Baxter equation \cite{LPW19}.
 
In this article, we are interested in a family of direct limit algebras that arises from the theory of partition (a.k.a. easy) quantum groups initiated by Banica and Speicher in their seminal article \cite{BS09}. More precisely, Banica and Speicher introduced the notion of \emph{categories of partitions} $\cC = (\cC(k,l))_{k,l \geq 0}$ which model the representation theory of the partition quantum groups. The elements of $\cC(k,k), \ k \geq 0$ have a simple diagrammatical representation and serve as a basis for an inductive sequence of finite-dimensional algebras 
\[ \ldots \subset A_{(\cC,\delta)}(k) \subset A_{(\cC,\delta)}(k+1) \subset \dots \]
depending on an additional loop parameter $\delta > 0$. If the loop parameter $\delta$ is chosen generically, the algebras become semisimple and thus admit a limit object $A_{(\cC,\delta)}(\infty)$. Banica and Speicher further divided the categories of partitions in several subfamilies that are amenable to classification. In this article, we are concerned with the limit algebras of categories of \emph{noncrossing} partitions of which there are exactly seven (corresponding to the compact quantum groups $\G = O_N^+,S_N^+, B_N^+, H_N^+, S_N^{'+}, B_N^{'+},B_N^{ \# +}$ when $\delta =N$) as shown by Weber \cite{We13}.

The most prominent limit algebras appearing in this setting are the infinite Temperley-Lieb algebra $\TL(\infty,\delta)$ (in the three cases $\G = O_N^+,S_N^+,S_N^{'+}$, see Remark \ref{rem.modifiedthesame}) and the $2$-Fuss-Catalan-algebra $\FC_2(\infty,\delta)$ (when $\G = H_N^+$) introduced by Bisch and Jones \cite{BiJo95} in their analysis of intermediate subfactors, see also \cite{La01}. 
The extremal traces of the infinite Temperley-Lieb algebra have been classified in the PhD thesis of A. Wassermann \cite{Was81} with each extremal trace corresponding to a parameter value $\lambda \in [1/2,1]$. Wassermann also pointed out that this classification problem is equivalent to the computation of the \emph{minimal boundary} in the sense of Vershik and Kerov of the \emph{semi-Pascal graph} (drawn below in Figure \ref{fig.semiPascalintro}). Loosely speaking, this graph describes how the irreducible components of the algebras $\TL(0,\delta) \subset \TL(1,\delta)\subset \TL(2,\delta) \subset \dots$ are nested inside each other. Its minimal boundary consists of probability measures on the graph that satisfy a Gibbs-type consistency condition and are extremal among all probability measures satisfying this condition, see Subsection \ref{subsec.minimalboundary}.
\begin{figure}[h!]
\begin{center}
\includegraphics[scale=0.25]{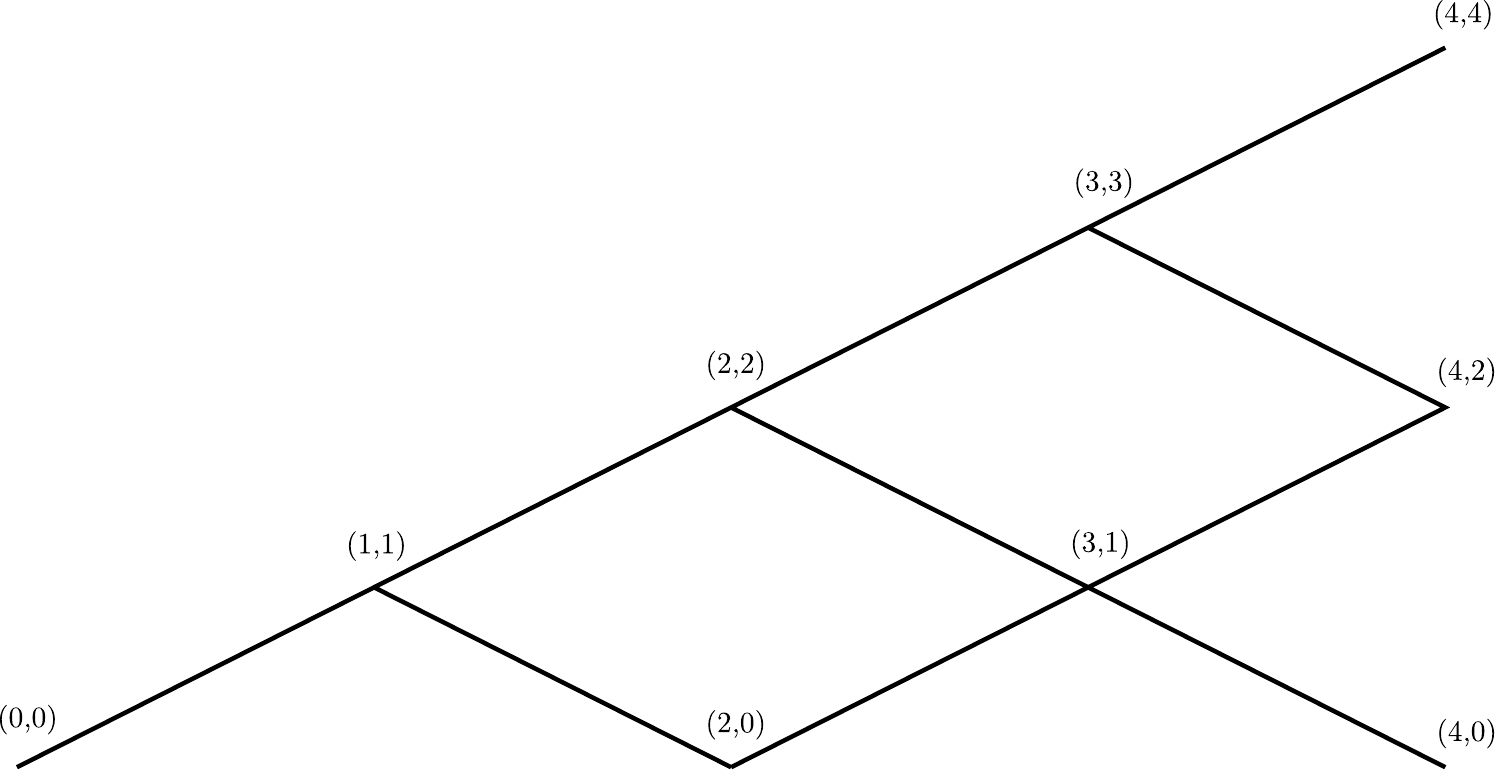}
\end{center}
\caption{\label{fig.semiPascalintro} The first five levels of the semi-Pascal graph.}
\end{figure}

In fact, the relationship between traces on the infinite Temperley-Lieb algebra and boundary measures on the semi-Pascal graph is an instance of a more general correspondence between traces on inductive limits of finite dimensional $C^*$-algebras and boundary measures on their \emph{branching graphs} or \emph{Bratteli diagrams}, which is well-known to experts and will be fleshed out in Section \ref{sec.branchinggraphs}. By combining this correspondence with results due to Vershik and Kerov \cite{VK81}, Vershik and Malyutin \cite{VM15}, Vershik and Nikitin \cite{VN06} and Wassermann \cite{Was81}, we develop a general methodology to classify traces on infinite diagram algebras. We will describe this methodology thoroughly in Section \ref{sec.generalmethodology}, following this introduction. Of particular importance to our approach is the process of \emph{pascalization} of an $\N$-graded graph, see Section \ref{sec.branchinggraphs}. In a nutshell, the branching graphs associated to diagram algebras all arise  through this process of pascalization from significantly smaller and more simple graphs. This is of great use when computing relevant combinatorial quantities such as path counts between vertices. The technique of pascalization, though not the terminology, is also well-known in subfactor theory, see e.g. \cite{JS97}, where the smaller graph is called the \emph{principal graph} of the larger one.

Concretely, for the different examples of infinite dimensional diagram algebras derived from categories of noncrossing partitions, we arrive at the following results which in some cases go beyond a mere classification of traces:
\paragraph*{Infinite Temperley-Lieb algebra.} We recap Wassermann's trace classification, rephrase it as a classification of \emph{central random ballot paths} and make his result much more explicit in this probabilistically natural context. A ballot path (the terminology originates from Bertrand's famous ballot problem) is a lattice path on $\N^2$ starting at $(0,0)$ that is allowed to take steps $(1,1)$ and $(1,-1)$. Consequently, a random ballot path is a probability measure $\mu$ on the space of infinite ballot paths which we require to satisfy the following memory-loss condition (called centrality): given the event that the random path passes through a point $(n,k)$ on the lattice, every path from $(0,0)$ to $(n,k)$ is chosen with the same probability. Wassermann's result then implies that every central random ballot path is the mixing of Markov chains $M_{\lambda}, \ \lambda \in [1/2,1] $ (Theorem \ref{thm.randomballot}). Expanding Wassermann's work, we provide explicit transition probabilities for these Markov chains in Section \ref{sec.templieb}.

\paragraph*{Infinite Motzkin algebra.} In Section \ref{sec.freebistochasic}, we address the trace classification problem for the direct limit algebra $A_{(\cB^+,\delta)}(\infty)$ associated to the free bistochastic quantum groups $B_N^+$ and $B_N^{'+}$. Again, the problem admits an equivalent formulation in terms of random lattice paths and the type of lattice paths appearing is known in the combinatorial literature as \emph{Motzkin paths}. These resemble ballot paths but allow for additional even level steps $(1,0)$. We establish this connection by an analysis of the representations of the algebras $A_{(\cB^+,\delta)}(k), \ k \geq 0$. In particular, this yields a description of the branching graph of the sequence $A_{(\cB^+,\delta)}(0) \subset A_{(\cB^+,\delta)}(1) \subset \dots$ (see also \cite{BH14} for another approach). By combining our reinterpretation of the branching graph of the infinite Motzkin algebra with lattice path counting results from the algebraic combinatorics literature (our main reference here is \cite{Kra15}), we obtain explicit formulas for the dimensions of all irreducible representations of $A_{(\cB^+,\delta)}(k)$. Next, we prove in Theorem \ref{thm.randomMotzkinclass} that every central random Motzkin path is a mixture of Markov chains $M_{(\lambda_1,\lambda_2)}$ indexed by parameters $\lambda_1 \leq \lambda_2$ with $0 \leq \lambda_1 + \lambda_2 \leq 1$. By virtue of the correspondence between traces on $A_{(\cB^+,\delta)}(\infty)$, this also yields a classification of traces on this algebra, see Corollary \ref{cor.Motzkintraceclassification}. Lastly, we provide formulas (in terms of $\lambda_1, \lambda_2$) for the probability that the chain $M_{(\lambda_1,\lambda_2)}$ returns to the root after a given number of steps and a recursion that determines the transition probabilities of this chain uniquely.

\paragraph*{Infinite diagram algebra $A_{(\cB^{\#+},\delta)}(\infty)$ and infinite Fuss-Catalan algebra.} In Section \ref{sec.freelymodified}, we determine the representation theory of the finite dimensional  algebras $A_{(\cB^{\#+},\delta)}(0) \subset A_{(\cB^{\#+},\delta)}(1) \subset \dots $ associated to the freely modified bistochastic quantum group $B_N^{\#+}$. We show that the branching graph of this tower of algebras is the pascalization of the \emph{derooted Fibonacci tree}, see Figure \ref{fig.Fibintro}. In fact, this case closely resembles the last remaining case, the infinite $2$-\emph{Fuss-Catalan algebra} $\FC_2(\infty,\delta)$ which is associated to the free hyperoctahedral group $H_N^+$ and whose branching graph is the pascalization of the (full) Fibonacci tree \cite{BiJo95}. Due to this similarity, we will discuss dimension formulas and the trace classification problem for both infinite dimensional diagram algebras jointly in Section \ref{sec.FussCat}. 

\begin{figure}[h!]
\begin{center}
\includegraphics[scale=0.4]{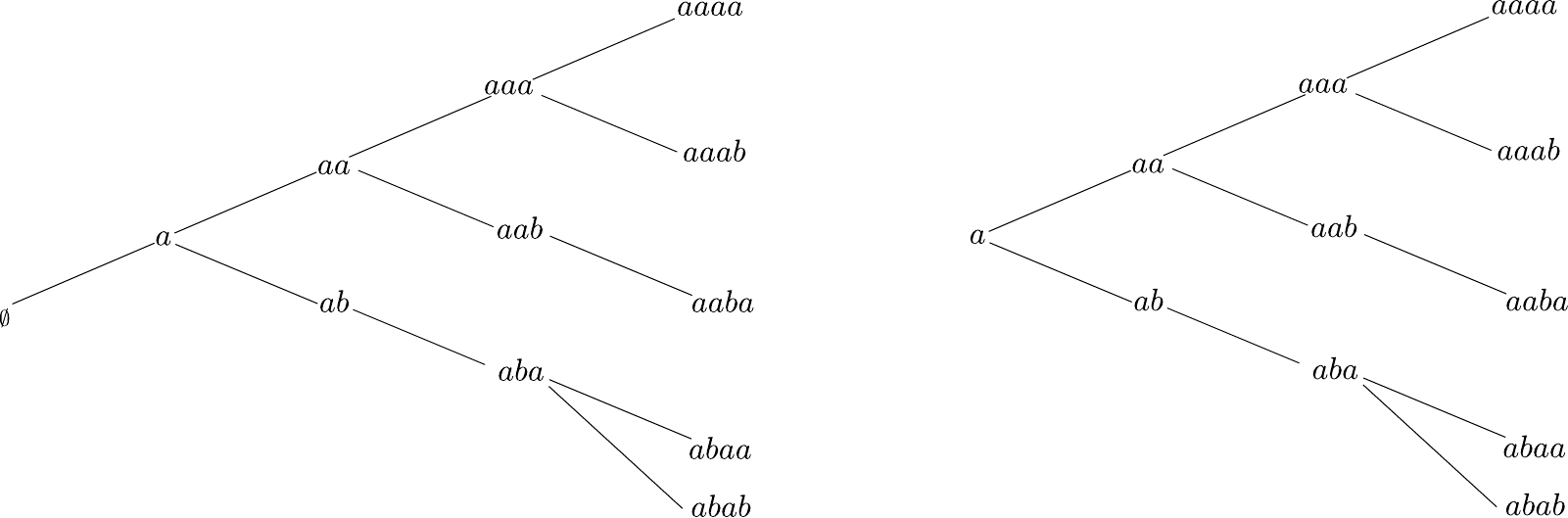}
\end{center}
\caption{\label{fig.Fibintro} The first levels of the Fibonacci tree $\FT$ and the derooted Fibonacci tree $\FT^*$.}
\end{figure}

In both cases, the description of the branching graph as the pascalization of a version of the Fibonacci tree translates the trace classification problem into an equivalent probabilistic problem of independent interest: the computation of the \emph{exit boundary for random walks} on the Fibonacci tree (to follow the terminology of \cite{VM15}). In \cite{VM15}, this problem was solved for homogeneous trees by making clever use of their symmetric nature. For the less symmetric Fibonacci tree, the combinatorics of this problem become more involved and as a result, the computation of the exit boundary becomes more subtle. For a discussion on exit boundaries in a different setting, see also \cite{VM18}, where the exit boundary is called the \emph{absolute}.

In essence, we prove that every element of the exit boundary, that is to say every ergodic central Markov chain on the pascalization $\cP(\FT)$ of the Fibonacci tree $\FT$ is the lift of a transient random walk $S_{(t,\eta)}$  on $\FT$. This random walk is uniquely determined by a \emph{structure constant} $\eta \in [0,4/27]$, which describes the probability to cross edges once in both directions, and an \emph{end} $t \in \partial \FT$ of  the tree $\FT$ to which $S_{(t,\eta)}$ converges almost surely. 
In summary, we obtain the following theorem.
\begin{letterthm}[see also Theorem \ref{thm.classificationFC}] \label{thmA}
The ergodic central Markov measures on $\cP(\FT)$ are the lifts of random walks $S_{(t,\eta)}$ with 
\[(t,\eta) \in \partial \FT \times [0,4/27].\]
\end{letterthm}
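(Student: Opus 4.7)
The plan is to apply the Vershik--Kerov ergodic method adapted to pascalized branching graphs, thereby reducing the classification problem on $\cP(\FT)$ to an analysis of transient central random walks on $\FT$ itself. As a first step, I identify ergodic central Markov measures on $\cP(\FT)$ with the extreme points of the (Choquet) simplex of central measures, or equivalently with the minimal harmonic functions on $\cP(\FT)$. Using the pascalization construction discussed in Section~\ref{sec.freelymodified}, I would then show that any central Markov chain on paths in $\cP(\FT)$ projects canonically to a Markov chain on $\FT$ (by forgetting the Pascal coordinate), and that this projection establishes a bijection between the relevant simplices which preserves extremality.

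Next I would analyse the projected process. Because triviality of the tail $\sigma$-algebra forces extremality, and because centrality excludes the possibility of a non-trivial random oscillation between disjoint subtrees, an ergodic central measure must project to a Markov chain on $\FT$ which is almost surely transient and converges almost surely to a single end $t \in \partial \FT$. This already produces the first parameter of the classification and reduces the problem to classifying, for each fixed end $t$, the central transient walks on $\FT$ converging to $t$.

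For the second parameter, I would write down the general form of such a central walk. Exploiting the self-similar Fibonacci-type recursion governing the local structure of $\FT$, one sees that all transition probabilities of a central walk converging to $t$ are determined by a single scalar: the asymptotic probability $\eta$ of crossing an edge once in each direction. The admissible range $\eta \in [0,4/27]$ will then emerge from imposing non-negativity and normalization at every type of vertex; at the ternary branching vertices this reduces to requiring a quantity of the form $p(1-p)^2$ to be at most $\eta$, whose maximum on $[0,1]$ is indeed $4/27$, attained at $p = 1/3$. Conversely, for each $(t,\eta)$ in the claimed range, I would check that the explicit transition probabilities produced are consistent and positive, and that the lift to $\cP(\FT)$ yields an ergodic central measure (via uniqueness of the associated minimal harmonic function).

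The main obstacle will be the explicit harmonic analysis on the Fibonacci tree: unlike the homogeneous trees treated in \cite{VM15}, $\FT$ has two distinct vertex types (corresponding to the Fibonacci letters $1$ and $2$) and no vertex-transitive symmetry, so one must carefully track how the Fibonacci recursion propagates the constraints from one generation to the next. Deriving the tight bound $4/27$, as opposed to a weaker numerical range, and establishing the rigidity that forces a single structure constant $\eta$ (rather than a family of local parameters) along the chosen end $t$, will require a delicate induction on the branching structure combined with the dimension formulas for the irreducible representations of $\FC_2(k,\delta)$ computed earlier in the paper.
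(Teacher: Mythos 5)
Your overall strategy --- Vershik--Kerov ergodic method, path--walk identification to reduce to central random walks on $\FT$ with parameters $t \in \partial\FT$ and a crossing constant $\eta$ --- is the paper's. But two of the steps have genuine gaps.

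Your transience argument fails: triviality of the tail $\sigma$-algebra is just the definition of extremality and does not rule out recurrence (a recurrent chain can have a trivial tail), and ``centrality excludes non-trivial oscillation'' is not an argument. The paper proves transience (Lemma \ref{lem.transient}) by a quantitative estimate: the ergodic method plus Landau's dimension/walk-counting formula force, under a recurrence hypothesis, the probability of stepping from the first level back to the root to equal $4/27$, so the return probability after $2n$ steps is $C_n^2(4/27)^n$, which is summable because $4/27$ is the radius of convergence of $G(z) = \sum C_n^2 z^n$ and the boundary series still converges by Stirling. Your derivation of the $4/27$ bound via maximizing $p(1-p)^2$ is numerically right --- one has $\eta = p(1-p)^2$ for $p = 1 - G(\eta)^{-1}$, equivalently the Lagrange relation $G = \eta G^3 + 1$ --- but this constraint lives at the label-$1$ (degree-two) vertices rather than the ``ternary'' ones, and the paper obtains the sharp bound from convergence of the generating function rather than from a local optimization.

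The more serious gap is ergodicity of the $\nu_{(t,\eta)}$. ``Via uniqueness of the associated minimal harmonic function'' is circular: that uniqueness is precisely what has to be proved. The paper resolves this with the law of large numbers for exit times (Theorem \ref{thmB}/\ref{thm.llnexit}): $(N_k - k)/r(t_k)$ converges almost surely to a constant $f(\eta)$ with $f$ injective on $[0,4/27)$, producing a tail-measurable invariant which, combined with almost-sure convergence to $t$ (Corollary \ref{cor.convergenceRW}, itself requiring recurrence of an auxiliary walk, Lemma \ref{lem.recaux}), separates distinct $(t,\eta)$; the ergodic decomposition of $\nu_{(t,\eta)}$ over the candidate family must then be a Dirac mass. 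The critical case $\eta = 4/27$ is treated separately by a weak-limit argument. Your plan identifies no substitute for this tail invariant --- the ``delicate induction'' with dimension formulas can show that every ergodic measure has the form $\nu_{(t,\eta)}$, but not that the measures in this family are themselves ergodic.
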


The random walks $S_{(t,\eta)}$ and their lifts to the path space of $\cP(\FT)$ are defined in Section \ref{sec.deFintheorem}. Their transition probabilities admit nice formulas on all edges that do not lie on the specified end $t$ and these formulas involve the evaluation at the value $\eta$ of the generating function $G(z) = \sum_{n=0}^{\infty} C^2_n z^n$ of the \emph{Fuss-Catalan numbers } $(C^2_n)_{n \geq 0} $. This explains the upper bound $4/27$ for the parameter $\eta$ which is exactly the radius of convergence of  the power series $G(z)$. More generally, we define these random walks for general $s$-Fuss-Catalan trees with $s \geq 2$ in which case the critical value for the structure constant becomes $ \tfrac{s^{s}}{(s+1)^{s+1}}$. 

By the general correspondence between traces on infinite diagram algebras and the minimal boundary of their branching graphs, Theorem \ref{thmA} thus results in a classification of traces on $\FC_2(\infty,\delta)$. The one-to-one correspondence between the random walks $S_{(t,\eta)}$ and the extremal traces on $\FC_2(\infty,\delta)$ is explicitly spelled out in Corollary \ref{cor.classFC}.
An important ingredient in the proof of Theorem \ref{thmA} and an interesting result on its own, is the law of large numbers (Theorem \ref{thm.llnexit}) for the exit times $N_k$ at the vertex $t_k$, the number of steps after which $S_{(t,\eta)}$ leaves $t_k$ forever.
The proof of Theorem \ref{thmA} can be copied almost verbatim for the derooted Fibonacci tree with only a slight adaptation of the formula for the random walks $\tilde{S}_{(t,\eta)}$, see Subsection \ref{subsubsec.randomwalks}. 

By combining the results explained in the previous paragraphs, we obtain a classification of traces on all infinite diagram algebras associated to free partition quantum groups. We summarize the main results of this work in the following table.

\begin{center}
\begin{tabular}{|c|c|c|c|c|}
\hline
\makecell{Infinite \\ diagram \\ algebra} & \makecell{Quantum group \\ at $\delta = \sqrt{N}$ } & \makecell{Branching graph \\ is pascalization \\ of} & \makecell{Extremal traces \\ parametrized \\ by} & \makecell{Probabilistic \\ description} \\
\hline \hline
\makecell{Temperley-Lieb \\ algebra \\ $\TL(\infty,\delta)$} & \makecell{ $O_N^+,S_N^+, S_N^{'+}$ \\ \cite{BS09} \\ Remark \ref{rem.modifiedthesame}} & \makecell{ half-line $\N$ \\ \cite{Jo83} \cite{Was81} \\ Section \ref{sec.templieb} } & \makecell{$ \lambda \in [1/2,1]$ \\ \cite{Was81} \\ Section \ref{sec.templieb} } & \makecell{random ballot \\ paths \\ Subsection \ref{subsubsec.Temptraces}} \\
\hline 
\makecell{Motzkin \\ algebra \\ $\Mo(\infty,\delta)$} & \makecell{ $B_N^+, B_N^{'+}$ \\ \cite{BS09} \\ Remark \ref{rem.modifiedthesame}} & \makecell{ladder \\ Remark \ref{rem.ladder} } & \makecell{$\lambda_1,\lambda_2$ s.t. \\ $0 \leq \lambda_2 \leq \lambda_1 \leq 1$, \\ $0 \leq \lambda_1 + \lambda_2 \leq 1$, \\ Corollary \ref{cor.Motzkintraceclassification} } & \makecell{random Motzkin \\ paths \\ Subsection \ref{subsubsec.tracesMotzkin}} \\
\hline
\makecell{$2$-Fuss-Catalan \\ algebra \\ $\FC_2(\infty,\delta)$} & \makecell{ $H_N^+$ \\  \cite{BBC07} \cite{BS09} } & \makecell{Fibonacci tree  \\ $\FT$ \\  \cite{BiJo95}} & \makecell{$(t,\eta) \in$ \\ $ \partial \FT \times [0,4/27]$ \\ Corollary \ref{cor.classFC} } & \makecell{random walks \\ on $\FT$ \\ Subsection \ref{subsubsec.randomwalks}}  \\
\hline
$A_{(\cB^{\#+},\delta)}(\infty)$ & \makecell{$B_N^{\#+}$ \\ \cite{BS09} } & \makecell{derooted \\ Fibonacci tree $\FT^*$ \\ Corollary \ref{cor.pascofderootedFT} } & \makecell{$(t,\eta)\in$ \\ $\partial \FT \times [0,4/27]$ \\ Corollary \ref{cor.classFC} } & \makecell{random walks \\ on $\FT^*$ \\ Subsection \ref{subsubsec.randomwalks}}  \\
\hline
\end{tabular}
\end{center}


There are several other classes of infinite diagram algebras appearing in the work of Banica and Speicher \cite{BS09}, Weber \cite{We13} and others, e.g. \cite{RaWe16} \cite{FlP18} \cite{VV19}, for which the trace classification problem is still open and for which in many cases even the branching graphs are unknown. In the companion article \cite{Wa20}, we will classify the extremal traces on the next prominent class of infinite diagram algebras of \cite{BS09}, namely those containing the crossing partition. Originally, these algebras arose in the context of Schur-Weyl duality for compact groups \cite{W46}, and once more, it will be advantageous to think about the involved branching graphs as pascalizations of smaller principal graphs. In contrast to the principle graphs in this article, however, the underlying principle graphs in \cite{Wa20} will be variations of the famous Young graph, the branching graph associated to the infinite symmetric group $S_{\infty}$.


\paragraph*{Acknowledgements}
I am grateful to A. Bufetov, E. Peltola and P. Tarrago for stimulating discussions on the subject matter. I would also like to thank M. Weber for his thoughful comments on a previous draft of this article. Lastly, I would like to thank the anonymous referee whose suggestions have significantly improved this article. This work was supported by the Deutsche Forschungsgemeinschaft (DFG, German Research Foundation) under Germany’s Excellence Strategy – EXC 2047 “Hausdorff Center for Mathematics”.

\section{General Methodology} \label{sec.generalmethodology}

In this section, we will describe a rough general recipe for classifying traces on infinite diagram algebras.

\subparagraph*{Step 1:} Given a tower of finite dimensional diagram algebras $A_1 \subset A_2 \subset \dots$, the first step is to determine the irreducible representations of the algebras $A_k$ and their decomposition rules when restricted to the subalgebra $A_{k-1}$. A general approach to this tailored to diagram algebras was presented by Freslon and Weber \cite{FrWe16}. The data of irreducible representations plus restriction rules is then graphically encoded in the branching graph.  

\subparagraph*{Step 2:} The next step is to determine the principle graph of the diagram algebra, i.e. the smaller graph from which the branching graph can be obtained by pascalization. Although there is an explicit recipe to compute the principle graph from the branching, see e.g. \cite{JS97}, in our case it is often easy to guess it and then check that the correct branching graph is obtained after pascalization. \\  

For the Temperley-Lieb algebras, the representations, the branching graph and the principal graph are known \cite{Jo83} and we describe them in Subsection \ref{subsubsec.semi-Pascal}. For the Motzkin algebras, we compute the branching graph and read of the principal graph in Subsection \ref{subsubsec.freebistochbranching}. For the algebras $A_{(\cB^{\#+},\delta)}(k)$, Steps 1 and 2 are carried out in Subsection \ref{sec.freelymodified}. For the Fuss-Catalan algebras, branching and principal graph are once again known \cite{BiJo95} and we summarize these results in Subsection \ref{sec.FussCat}.

\subparagraph*{Step 3:} By Theorem \ref{thm.boundarytraces} in the preliminaries, the classification of traces on an infinite diagram algebra is equivalent to the classification of central measures on its associated branching graph, see Definition \ref{def.centralmeasure}. In order to classify these measures, we want to apply the Vershik-Kerov ergodic method (Theorem \ref{thm.ergodicmethod}) in Step 4, which relies on path counting formulas on the branching graph. Hence Step 3 is to find these path counting formulas. For the Temperley-Lieb and the Motzkin algebras, we infer them from results in the lattice path literature, see Theorems \ref{thm.numberballotpaths} and \ref{thm.Motzkinnumbers}. For the Fuss-Catalan algebras and the algebras $A_{(\cB^{\#+},\delta)}(k)$, the relevant path counting formulas follow form the work of Landau \cite{La01} and are presented in Subsection \ref{subsec.walkcount}.

\subparagraph*{Step 4:} To obtain candidates for the ergodic central measures, we now apply the Vershik-Kerov ergodic method, Theorem \ref{thm.ergodicmethod}, using the path counting formulas computed and collected in Step 3. According to the ergodic method, finding candidate measures comes down to determining the limit behaviours of fractions of path counting numbers. These computations have to be done case-by-case.

 \subparagraph*{Step 5:} The last step is to show that the candidate measures obtained in the Step 4 are in fact ergodic. To do so, there is no general method available and this once again has to be approached on a case-by-case basis. \\
 
For the Temperley-Lieb algebras, Step 4 and 5 have been carried out in Wassermann's thesis \cite{Was81} and we summarize the results in Subsection \ref{subsubsec.Temptraces}. For the Motzkin algebras, Steps 4 and 5 are executed in Subsection \ref{subsubsec.tracesMotzkin}, where we essentially imitate Wassermann's algebraic approach to Step 5. The remaining two cases (Fuss-Catalan and $A_{(\cB^{\#+},\delta)}(k)$) require by far the most work, see Subsections \ref{sec.deFintheorem}, \ref{subsubsec.randomwalks} and \ref{subsubsec.LLN}. Our approach here is probabilistic and the ergodic measures will be described as the laws of random walks on the Fibonacci tree. Step 5 then will follow from a law of large number for the exit times of these random walks proved in Subsection \ref{subsubsec.LLN}. 

\section{Preliminaries} \label{sec.preliminaries}

\subsection{Lattice paths} \label{subsec.latticepaths}
We will now introduce the lattice path models whose random behaviour we would like to study in the first part of this article. The literature on the combinatorics of lattice paths is vast and we will mostly refer to the survey \cite{Kra15}.

Generally speaking, a lattice path in $\Z^2$ is a (finite or infinite) sequence of points in the lattice $\Z^2$ that is only allowed to change according to an a priori specified set of steps (a step is the difference between two consecutive elements of the sequence). The lattice paths we will focus on in this article, will not be allowed to cross the $x$- and $y$-axes and thus will be restricted to the upper right quadrant $\N \times \N$, where we include $0$ in $\N$ by convention. We will call the tuples $(1,1), (1,-1), (1,0)$ an \emph{up-step, down-step} and \emph{level-step} respectively.

\begin{definition}
\begin{itemize}
\item[(1)] A \emph{ballot path} from $(a,b) \in \N \times \N$ to $(c,d) \in \N \times \N$ is a lattice path starting at $(a,b)$ and ending at $(c,d)$ that takes only up- or down-steps. We will also allow for infinite ballot paths, that is to say ballot paths with a specified starting point performing an infinite number of steps.
\item[(2)] A \emph{Motzkin path} from $(a,b) \in \N \times \N$ to $(c,d) \in \N \times \N$ is a lattice path starting at $(a,b)$ and ending at $(c,d)$ that is allowed to take up-steps, down-steps and level-steps. Again, we will also be interested in infinite Motzkin paths for which only a starting point is specified and which perform infinitely many up-, down-, or level-steps.
\end{itemize}
\end{definition}
\begin{figure}[h!]
\centering
\begin{subfigure}[b]{0.4\textwidth}
\begin{tikzpicture}[scale=0.6]
   \tkzInit[xmax=8,ymax=8,xmin=0,ymin=0]
   \tkzGrid
   \tkzAxeXY
   \draw[ thick,-latex,red] (0,0) -- (1,1) node[anchor=south west] {};
   \draw[ thick,-latex,red] (1,1) -- (2,2) node[anchor=south west] {};
   \draw[ thick,-latex,red] (2,2) -- (3,1) node[anchor=south west] {};
   \draw[ thick,-latex,red] (3,1) -- (4,2) node[anchor=south west] {};
   \draw[ thick,-latex,red] (4,2) -- (5,3) node[anchor=south west] {};
   \draw[ thick,-latex,red] (5,3) -- (6,2) node[anchor=south west] {};
   \draw[ thick,-latex,red] (6,2) -- (7,1) node[anchor=south west] {};
   \draw[ thick,-latex,red] (7,1) -- (8,2) node[anchor=south west] {};
  \end{tikzpicture}
  \subcaption{A ballot path from $(0,0)$ to $(8,2)$.}
\end{subfigure}
\hfill
\begin{subfigure}[b]{0.4\textwidth}
\begin{tikzpicture}[scale=0.6]
   \tkzInit[xmax=8,ymax=8,xmin=0,ymin=0]
   \tkzGrid
   \tkzAxeXY
   \draw[ thick,-latex,red] (0,0) -- (1,1) node[anchor=south west] {};
   \draw[ thick,-latex,red] (1,1) -- (2,1) node[anchor=south west] {};
   \draw[ thick,-latex,red] (2,1) -- (3,0) node[anchor=south west] {};
   \draw[ thick,-latex,red] (3,0) -- (4,1) node[anchor=south west] {};
   \draw[ thick,-latex,red] (4,1) -- (5,2) node[anchor=south west] {};
   \draw[ thick,-latex,red] (5,2) -- (6,2) node[anchor=south west] {};
   \draw[ thick,-latex,red] (6,2) -- (7,3) node[anchor=south west] {};
   \draw[ thick,-latex,red] (7,3) -- (8,2) node[anchor=south west] {};
  \end{tikzpicture}
  \subcaption{A Motzkin path from $(0,0)$ to $(8,2)$.}
  \end{subfigure} 
  \caption{Ballot and Motzkin paths.}
\label{fig.paths}  
\end{figure}
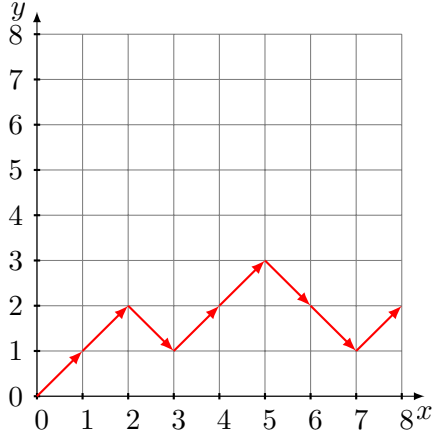
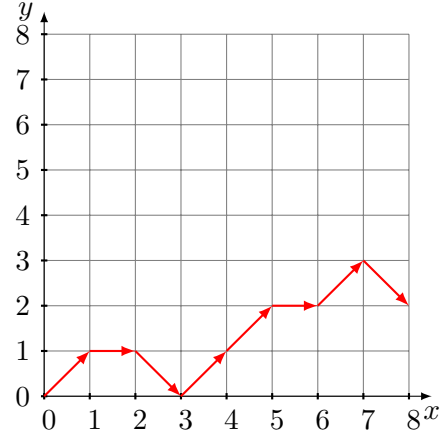

\begin{remark} \label{rem.paths}
\begin{itemize}
\item[(1)] Ballot paths that start at the root $(0,0)$ and end in a point on the $x$-axis are commonly referred to as \emph{Dyck paths}.
\item[(2)] It is also common to define ballot paths somewhat differently as lattice paths that take steps $(0,1)$ and $(1,0)$, start above the diagonal $x=y$ and do not cross it. Clearly, a 45 degree clockwise rotation translates this setting into ours. 

\item[(3)] Ballot paths can also be identified with walks on the one sided line (i.e. the graph with vertex set $\N$ and edges between neighboring non-negative integers). Similarly, Motzkin paths are in one-to-one correspondence with trajectories of a \emph{lazy walker} on $\N$ that is to say, one that is allowed to move to the left or to the right or to rest in place. 
\end{itemize}
\end{remark}

We will denote the set of ballot paths (resp. Motzkin paths) from $(a,b)$ to $(c,d)$ by $\Ba((a,b),(c,d))$ (resp. $\Mo((a,b),(c,d))$). For the following result, see \cite[Theorem 10.3.1]{Kra15}. Note that there, this theorem is formulated in the setting mentioned in Remark \ref{rem.paths}.

\begin{theorem} \label{thm.numberballotpaths}
The number of ballot paths from $(a,b)$ to $(c,d)$ is
\begin{align*}
|\Ba((a,b),(c,d))| = \binom{c-a}{(c-a-d+b)/2} - \binom{c-a}{(c-a+d+b+2)/2}
\end{align*}
where by convention a binomial coefficient is $0$ if its bottom component is not an integer. 
\end{theorem}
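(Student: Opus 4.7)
The plan is to apply André's reflection principle. A lattice path from $(a,b)$ to $(c,d)$ with steps $(1,\pm 1)$ has exactly $n := c-a$ steps; if there are $u$ up-steps and $d_s$ down-steps, then $u + d_s = n$ and $u - d_s = d-b$, giving $d_s = (c-a-d+b)/2$. In particular, a necessary condition for such a path to exist is that $(c-a-d+b)/2$ be a non-negative integer, which matches the convention stated in the theorem that binomial coefficients with a non-integer bottom entry are zero.

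The first step is to count the unrestricted $(1,\pm 1)$-lattice paths from $(a,b)$ to $(c,d)$, ignoring the non-negativity constraint. These are obtained simply by choosing which $d_s$ of the $n$ steps are down-steps, yielding $\binom{c-a}{(c-a-d+b)/2}$, the first binomial in the formula. The second step is to subtract the ``bad'' paths, i.e.\ those that touch the line $y = -1$ at some point (equivalently, leave the upper-right quadrant). Given a bad path, let $\tau$ be the first time it hits $y=-1$, and reflect the initial segment from $(a,b)$ to $(\tau,-1)$ across the horizontal line $y = -1$. The starting point $(a,b)$ is sent to its reflection $(a, -b-2)$, and one obtains an unrestricted lattice path from $(a,-b-2)$ to $(c,d)$. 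Conversely, since $-b-2 < 0 \le d$, every lattice path from $(a,-b-2)$ to $(c,d)$ must cross $y=-1$, and reflecting before its first such crossing recovers a bad path from $(a,b)$ to $(c,d)$; thus this construction is a bijection. Applying the count from the first step to the new starting height, and noting that the vertical displacement is now $d-(-b-2)=d+b+2$, the number of bad paths equals $\binom{c-a}{(c-a+d+b+2)/2}$. Subtracting gives the claimed formula.

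There is essentially no serious obstacle here—this is a textbook application of the reflection principle. The only point requiring a little attention is the parity/integrality convention: one should verify that whenever the first binomial is nonzero (so $(c-a-d+b)$ is a non-negative even integer), the bottom entry of the second binomial is also an integer, which is immediate since $(c-a+d+b+2) - (c-a-d+b) = 2d+2$ is even. One should also record the edge case $d=0$ (paths ending on the $x$-axis), where the bijection still works because the argument only uses $-b-2 < 0 \le d$. Once these bookkeeping checks are in place, the proof is complete.
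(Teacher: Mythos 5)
Your proof via the reflection principle is correct. The paper itself gives no proof of this statement but simply cites \cite[Theorem~10.3.1]{Kra15}; the reflection argument you present is the standard one underlying that cited formula, so there is nothing to reconcile.
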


The analogous result for Motzkin paths is the following (see \cite[Theorem 10.6.1]{Kra15}).

\begin{theorem} \label{thm.Motzkinnumbers}
The number of Motzkin paths from $(a,b)$ to $(c,d)$ is 
\begin{align*}
|\Mo((a,b),(c,d))| = \sum_{k=0}^{c-a} \binom{c-a}{k} \bigg( \binom{c-a-k}{(c-a-k+d-b)/2} - \binom{c-a-k}{(c-a-k+b+d+2)/2} \bigg).
\end{align*}
\end{theorem}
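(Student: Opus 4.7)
The plan is to reduce the Motzkin count to the ballot count already computed in Theorem \ref{thm.numberballotpaths} by conditioning on the number of level steps.

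First I would fix a Motzkin path from $(a,b)$ to $(c,d)$ of length $n = c-a$, and let $k$ denote the number of level steps $(1,0)$ it takes. I would then set up a bijection between Motzkin paths of length $n$ from $(a,b)$ to $(c,d)$ with exactly $k$ level steps and pairs $(S, \gamma)$, where $S \subseteq \{1,\dots,n\}$ is the set of indices at which the level steps occur (with $|S|=k$) and $\gamma$ is a ballot path of length $n-k$ from $(0,b)$ to $(n-k, d)$. The direction Motzkin-path-to-pair is obtained by recording the positions of the level steps and then deleting them; the inverse reinserts level steps at the prescribed positions. The number of choices for $S$ is $\binom{n}{k}$.

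The key observation to justify the bijection and apply Theorem \ref{thm.numberballotpaths} is that a Motzkin path stays in $\N \times \N$ if and only if the underlying ballot path (after deleting the level steps) stays in $\N \times \N$. This is immediate because level steps do not change the $y$-coordinate, so the minima of the heights are attained on the non-level portion. Hence the extracted $\gamma$ really is an admissible ballot path and, conversely, inserting level steps cannot violate non-negativity.

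By Theorem \ref{thm.numberballotpaths}, the number of admissible $\gamma$'s equals
\begin{align*}
\binom{n-k}{(n-k - d + b)/2} - \binom{n-k}{(n-k + d + b + 2)/2},
\end{align*}
and using the symmetry $\binom{n-k}{j} = \binom{n-k}{n-k-j}$ on the first term rewrites this as $\binom{n-k}{(n-k+d-b)/2} - \binom{n-k}{(n-k+d+b+2)/2}$, matching the summand in the statement. Summing over $k=0,\dots,n = c-a$ yields the claimed formula. The only potentially subtle point is the non-negativity compatibility between the Motzkin path and its underlying ballot path, but this is essentially a one-line observation, so I do not expect any real obstacle.
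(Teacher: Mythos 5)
Your proof is correct, and it is worth noting that the paper does not actually prove Theorem \ref{thm.Motzkinnumbers}; it simply cites \cite[Theorem~10.6.1]{Kra15}. So any correct argument is ``new'' relative to the text, and yours is the cleanest one available: condition on the set $S$ of positions of the $k$ level steps, note that deleting level steps gives a bijection with pairs $(S,\gamma)$ where $\gamma$ is a ballot path of length $n-k$ from height $b$ to height $d$, apply Theorem~\ref{thm.numberballotpaths}, and sum over $k$. The only ingredient that needs justifying is the compatibility of non-negativity with insertion/deletion of level steps, and your observation that level steps do not change the height (so the multiset of heights visited by the Motzkin path equals that visited by $\gamma$ up to repetitions) settles this. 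The algebraic cleanup via the symmetry $\binom{n-k}{j}=\binom{n-k}{n-k-j}$ applied to the first binomial coefficient is also exactly what is needed to match the statement as printed. In short, a complete elementary bijective proof where the paper only gives a reference.
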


\begin{remark}
\begin{enumerate}
\item The numbers 
\begin{align*}
m_n := |\Mo((0,0),(n,0))| = \sum_{l=0}^{\lfloor \tfrac{n}{2} \rfloor} \frac{n!}{(n-2l)!(l+1)!l!}
\end{align*}  
are known in the literature as the \emph{Motzkin numbers} (sequence A001006 in the OEIS) and they satisfy the identity $m_n = \sum_{k=0}^{\lfloor \tfrac{n}{2} \rfloor} \binom{n}{2k} C_k$, where $C_k = \frac{1}{k+1} \binom{2k}{k}$ are the Catalan numbers.
\end{enumerate}
\end{remark}





\subsection{Branching graphs} \label{sec.branchinggraphs}

A \emph{branching graph} or \emph{Bratteli diagram} $\Gamma = (V,E)$ is a locally finite $\N$-graded rooted graph, that is to say, a graph whose set of vertices $V$ can be subdivided into levels $V=\sqcup_{n \in \N} V_n$ with $V_0 = \{ \emptyset \}$ containing the root $\emptyset$, and whose edges can only connect vertices of adjacent levels. All concrete examples of branching graphs in this article will have finite level sets, i.e. $|V_n| < \infty$ for all $n \in \N$.

Branching graphs are typically used to encode the induction/restriction rules of inductive sequences $G_0= \{e \} \hookrightarrow G_1 \hookrightarrow G_2 \hookrightarrow \dots$ of finite groups  of more generally of inductive sequences $A_0 = \C \hookrightarrow A_1 \hookrightarrow A_2 \hookrightarrow \dots $ of semisimple $*$-algebras. To recall how this graph is obtained, let us fix the following notation. If $A \subset B$ is an inclusion of semisimple algebras and $M$ is a left $B$-module, we denote by $M^{\downarrow}$ the left $A$-module obtained by restricting the action of $B$ to $A$.

\begin{definition} \label{def.indrestrgraph}
Let $A_0 = \C \hookrightarrow A_1 \hookrightarrow A_2 \hookrightarrow \dots $ be an inductive sequence of semisimple $*$-algebras. The \emph{induction/restriction graph} of $(A_n)_{n \geq 0}$ is the branching graph $\Gamma = (V,E)$ defined in the following way.
\begin{itemize}
\item The $n$-th level vertex set $V_n$ is the set of (equivalence classes of) simple modules of $A_n$.
\item There are exactly $\mult(v, w^{\downarrow})$ edges between the simple $A_n$-module $v \in V_n$ and the simple $A_{n+1}$-module $w \in V_{n+1}$ where  $\mult(v, w^{\downarrow})$ denotes the multiplicity of $v$ in the decomposition of $w^{\downarrow}$ into simple $A_n$-modules.
\end{itemize}
\end{definition}

\begin{remark} \label{rem.dimensions}
The branching graph of a tower $A_0 = \C \hookrightarrow A_1 \hookrightarrow A_2 \hookrightarrow \dots $ produces a useful combinatorial interpretation of the dimensions of simple $A_n$-modules for all $n \in \N$. The number of paths on the graph starting at a simple $A_m$-module $v \in V_m$ and ending at a simple $A_n$-module $w$, $n > m$, is exactly the multiplicity of $v$ in the decomposition into simple summands of $w$ considered as an $A_m$-module. In particular, when $v = \emptyset$ is the root vertex corresponding to the one-dimensional trivial simple module $\cong \C$ of $A_0 = \C$, the number of paths from $\emptyset$ to $w$ encodes the multiplicity of $\C$ in the decomposition of $w$ as a $\C$-module. Thus, the number of rooted paths ending at $w$ is exactly $\dim(w)$.
\end{remark}

\begin{notation} \label{not.dimensions}
For arbitrary branching graphs, we will make use of the notation $\dim(v,w)$ for the number of paths leading from $v \in V_m$ to $w \in V_n, \ m > n$ and we will shorten $\dim(\emptyset,w)$ to $\dim(w)$.
\end{notation}

In most of the examples that we will consider in this article, the branching graphs in question are generated by smaller ones through a process dubbed \emph{pascalization} in \cite{VN06}.

\begin{definition}
Let $\Gamma = (V,E)$ be a branching graph. The \emph{pascalized graph} $\cP(\Gamma) = (\cP(V), \cP(E))$ of $\Gamma$ is defined in the following way.
\begin{itemize}
\item The vertex set of level $n$ is $\cP(V)_n = \{ (n,v) \ ; \ v \in V_k, \ k \leq n, \ k \equiv n \mod 2 \}$. We will regularly make use of the projection $\pi: \cP(V) \to V, \ (n,v) \mapsto v$.
\item In $\cP(\Gamma)$, the number of edges between $(n,v) \in \cP(V)_n$ and $(n+1,w) \in \cP(V)_{n+1}$ is the number of edges between $v \in V_k$ and $w \in V_l$ in the original graph $\Gamma$. In particular, this number can only be non-zero if $v$ and $w$ are  on neighbouring levels of $\Gamma$, that is $|k-l|=1$.
\end{itemize}
\end{definition}

\begin{example} \label{ex.pascalization}
\begin{enumerate}
\item[(1)] If we consider the set of integers $\Z$ as a branching graph with $\N$-grading $n \mapsto |n|$ and edges connecting neighboring integers, then $\cP(\Z)$ is the \emph{Pascal graph}, whence the name pascalization, see Figure \ref{fig.pascalization}.
\item[(2)] Similarly if we consider the set of non-negative integers $\N$ as a branching graph, its pascalization is the semi-Pascal graph $\cP(\N)$. This branching graph is the Bratteli diagram of the inductive family of Temperley-Lieb algebras at a generic parameter, see e.g \cite{Jo83} \cite{GHJ89}. We will discuss this example in more detail in Section \ref{sec.free}.
\item[(3)] More generally, the procedure of pascalization will be familiar to readers with a background in \emph{subfactor theory} although the name pascalization is not commonly used there. It is exactly the method by which one constructs the Bratteli diagram of irreducible bimodules of a subfactor $N \subset M$ from its principal graph, see e.g. \cite{JS97}.    
\end{enumerate}
\end{example}

\begin{figure}[h!]
\begin{center}
\includegraphics[scale=0.3]{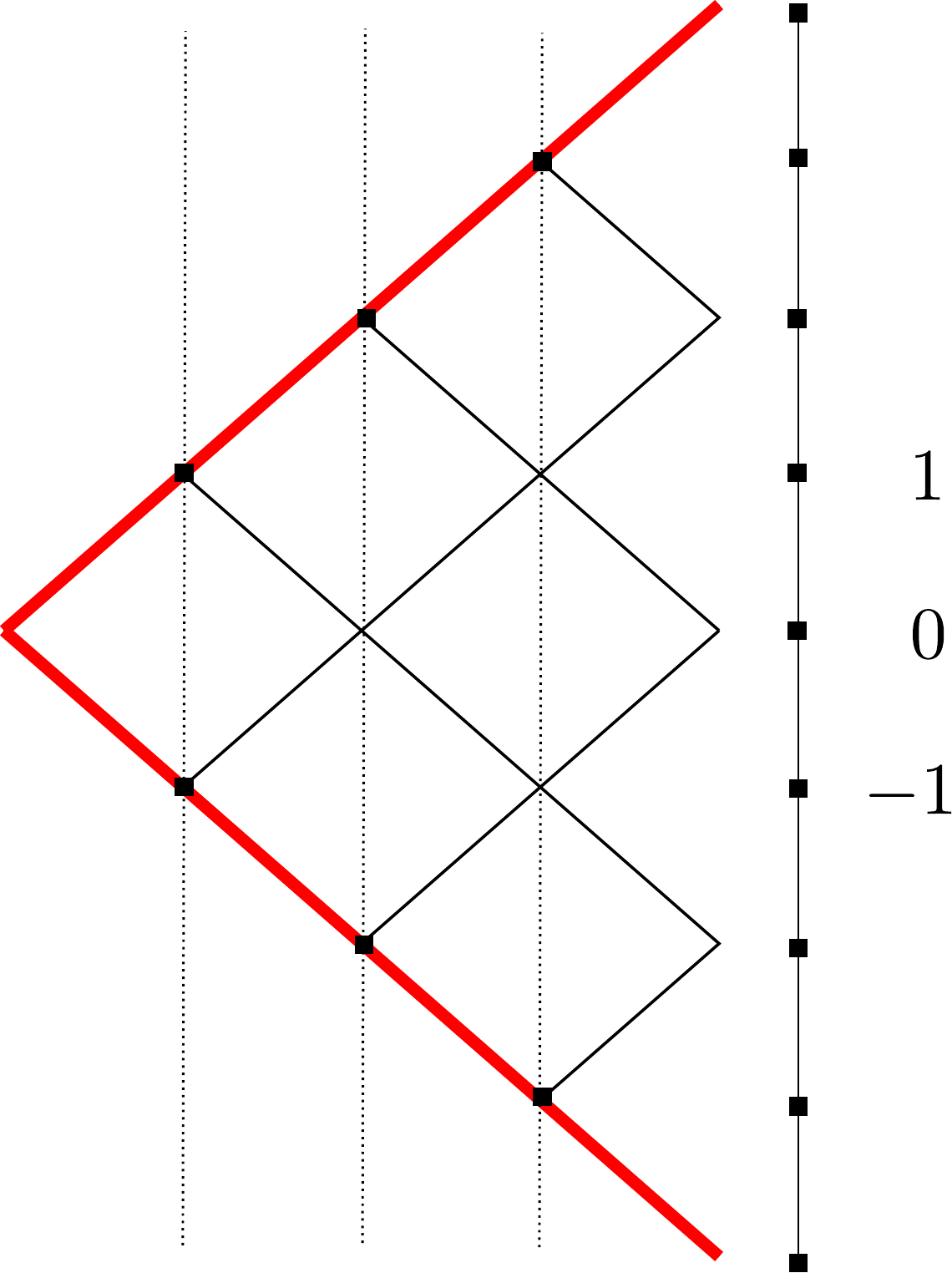}
\end{center}
\caption{\label{fig.pascalization} The principal graph $\Z$ (in red) and its pascalization, the Pascal graph. Every vertex on level $n$ that does not lie on the principal graph is obtained as a reflection of a vertex on level $n-2$ along the vertical line passing through level $n-1$.}
\end{figure}

\subsection{The minimal boundary of a branching graph} \label{subsec.minimalboundary}
In this section, we will recall the fundamentals on the minimal boundary of branching graphs. The methods explained here go back to the work of Vershik and Kerov \cite{VK81} \cite{VK82} and they are heavily used in asymptotic representation theory of inductive limit groups such as $S(\infty)$ or $U(\infty)$, see e.g. the excellent book \cite{BO16}. 

For a Bratteli diagram $\Gamma = (V,E)$, let us denote by $(\Omega,\cF):= (\Omega_{\Gamma}, \cF_{\Gamma})$ the space $\Omega \subset \prod_{n\geq 0} V_n$ of infinite paths on $\Gamma$ equipped with the restriction $\cF$ of the product $\sigma$-algebra to $\Omega$. The $n$-th coordinate projection will be denoted by $X_n: \Omega \to V_n, \ X_n((v_k)_{ \geq 1}) = v_n$. We will say that two infinite paths $x,y \in \Omega$ are \emph{tail equivalent} ($x \sim y$) if they coincide up to finitely many steps. The corresponding equivalence relation, the \emph{tail relation}, is Borel and will be referred to by $\cT$.

\begin{definition} \label{def.centralmeasure}
A probability measure $\nu$ on $(\Omega,\cF)$ will be called \emph{central} if it is invariant under $\cT$ or equivalently if for all $k\geq 0$, $v \in V_k$, and every path $v_0=\emptyset, v_1,\dots, x_k=v$ from the root to $v$, we have 
\[\nu(\{x \in \Omega \ ; \ x_1 = v_1 ,\dots, x_k= v \}) = \frac{\nu(\{ x_k = v \})}{\dim v}. \]
The central measure $\nu$ will be called \emph{ergodic} if it is ergodic w.r.t. $\cT$, that is to say if the only $\cT$-invariant subsets of $\Omega$ are those that have $\nu$-measure $0$ or $1$.
\end{definition}  

The topological space of ergodic central measures on $(\Omega,\cF)$ (equipped with the weak topology) is known as the \emph{minimal boundary} of $\Gamma$ and we will denote it by $\partial \Gamma$. The set $\cM_c(\Gamma)$ of central probability measures on $(\Omega,\cF)$ forms a Choquet simplex (w.r.t. the weak topology) whose extremal points are given by the boundary. An arbitrary central probability $\nu'$ can therefore always be decomposed into its ergodic components by Choquet's theorem, that is to say, there exists a (unique up to nullsets) probability measure $\mu$ on $\partial \Gamma$ such that $\nu'(A) = \int_{\partial \Gamma} \nu(A) \ d \mu(\nu)$.

Observe also that the marginal measures 
\[ \nu_k(v):= \nu(\{x \in \Omega \ ; x_k= v \}), \qquad (v \in V_k, \ k \geq 0),
\] of a central measure $\nu$ form a \emph{coherent system}, i.e. they satisfy the consistency condition
\begin{align*}
\nu_k(v) = \sum_{w: v \nearrow w} \frac{\dim(v)}{\dim(w)} \nu_{k+1}(w) \qquad \text{for all } \qquad v \in V_k, \ k \geq 0,  
\end{align*}
where the sum runs over all vertices $w \in V_{k+1}$ such that there is an edge from $v$ to $w$. Conversely, any coherent system of probability measures $(\nu_k)_{k \geq 0}$, where $\nu_k$ lives on $V_k$, uniquely determines a central measure $\nu$ on $(\Omega,\cF)$ with the family $(\nu_k)_{k \geq 0}$ as its marginals. This essentially follows by an application of the Kolmogorov extension theorem, see \cite[Chapter 7]{BO16} for a detailed argument. The one-to-one correspondence between coherent systems and central measures also interacts nicely with the weak topology:

\begin{lemma} \label{lem.weakcoherent}
Let $(\nu^{(i)})_i$ be a net of central measures on $(\Omega,\cF)$ and let $(\nu^{(i)}_k)_{k \geq 0}$ be the coherent system of $\nu^{(i)}$. Then $(\nu^{(i)})_i$ converges weakly to a central measure $\nu$ with coherent system $(\nu_k)_{k \geq 0}$ if and only if for every $k \geq 0, \  v\in V_k$, we have $\nu^{(i)}_k(v) \to \nu(v)$.
\end{lemma}

\begin{proof}
Since this result is once again well known to experts, we will only sketch the proof. First as the sets $V_k$ are finite, one observes that by Tychonoff's theorem, $\Omega$ is a compact topological space equipped with the restriction of the product topology. The first implication which states that weak convergence of central measures implies pointwise convergence of coherent systems is immediate. For the converse direction, consider the family of characteristic functions $(\mathbf{1}_{Z(v_1,\dots,v_k)})_{(v_1 \nearrow \dots \nearrow v_k)}$ of the cylinder sets $Z(v_1,\dots,v_k)= \{x \in \Omega \ ; \ x_1 = v_1 ,\dots, x_k= v_k \}$ where the index sets runs over all finite paths on $\Omega$ of arbitrary length $k$. One checks that this family separates points and vanishes nowhere and is therefore dense in the space of continuous functions $C(\Omega)$ by the Stone-Weierstrass theorem. Since $\int_{\Omega} \mathbf{1}_{Z(v_1,\dots,v_k)}(x) d\nu(x) = \frac{\nu_k(v_k)}{\dim(v_k)} $ by centrality, the assertion follows.
\end{proof}

 The following theorem is well-known to experts, see e.g \cite[Proposition 7.8]{BO16} in the context of characters of compact groups. 

\begin{theorem} \label{thm.boundarytraces}
Let $\C = A_0 \subset A_1 \subset \dots$ be a sequence of finite-dimensional semisimple $*$-algebras with inductive limit algebra $A_{\infty}$. Further, let $\Gamma$ be the induction/restriction branching graph of this sequence, so that the vertices $v \in V_n$ correspond to the irreducible summands of $A_n$. Then, the Choquet simplex of tracial states on the envelopping $C^*$-algebra $C^*(A_{\infty})$ is homeomorphic to $\cM_c(\Gamma)$. More precisely, this homeomorphism takes the central measure $\nu$ to the tracial state $\tau_{\nu}$ determined by
\begin{align*}
\tau_{\nu}(x) = \sum_{i=1}^m \nu(X_n = v_i) \frac{\tau_i(x)}{\dim(v_i)} \qquad (x \in A_n),
\end{align*}
where $\tau_i$ is the extremal trace on $A_n$ belonging to the irreducible summand $v_i$. Under this homeomorphism, extremal central measures are mapped onto extremal tracial states.
\end{theorem}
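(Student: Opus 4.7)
The plan is to construct the map $\Phi: \cM_c(\Gamma) \to \{\text{tracial states on } C^*(A_\infty)\}$ directly by the formula in the statement, build its inverse in an analogous manner, and then check that both are continuous and affine. The extremal correspondence follows automatically once $\Phi$ is identified as an affine homeomorphism between the two Choquet simplices, combined with the observation (already made in the paper) that the extremal points of $\cM_c(\Gamma)$ are precisely the ergodic central measures.

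\textbf{Step 1 (well-definedness of $\Phi$).} For $\nu \in \cM_c(\Gamma)$, the formula yields a positive linear functional $\tau_\nu$ on each $A_n$. The nontrivial point is consistency under the inclusion $A_n \hookrightarrow A_{n+1}$: the level-$n$ and level-$(n+1)$ formulas must agree on every $x \in A_n$. I would use the branching identity $\tau_w^{(n+1)}(x) = \sum_{v \in V_n} \mult(v,w^{\downarrow})\, \tau_v^{(n)}(x)$ for $x \in A_n$ to reduce consistency to
\[\frac{\nu(X_n = v)}{\dim(v)} = \sum_{w \in V_{n+1}} \mult(v, w^{\downarrow})\, \frac{\nu(X_{n+1} = w)}{\dim(w)}.\]
This follows by counting paths and applying Definition \ref{def.centralmeasure}: each of the $\mult(v,w^{\downarrow})$ edges from $v$ to $w$ extends a given path to $v$ into a path to $w$, and by centrality every such path carries weight $\nu(X_{n+1}=w)/\dim(w)$. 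Once consistency is in place, $\tau_\nu$ assembles into a tracial state on the pre-$C^*$-algebra $A_\infty = \bigcup_n A_n$ which, being positive and tracial, extends by continuity to $C^*(A_\infty)$.

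\textbf{Step 2 (construction of $\Phi^{-1}$).} Conversely, given a tracial state $\tau$ on $C^*(A_\infty)$, I would restrict to $A_n \cong \bigoplus_{v \in V_n} M_{\dim(v)}(\C)$, where the restriction decomposes as $\tau|_{A_n} = \sum_v \alpha_v^{(n)}\, \tau_v^{(n)}/\dim(v)$ with non-negative $\alpha_v^{(n)}$ summing to $1$. Setting $\nu(X_n = v) := \alpha_v^{(n)}$ and computing $\tau|_{A_n}$ via the embedding $A_n \subset A_{n+1}$ produces exactly the identity from Step 1, so Kolmogorov's extension theorem yields a unique probability measure $\nu$ on $(\Om, \cF)$. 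Centrality then reduces to the observation that $\tau_v^{(n)}/\dim(v)$ is the normalized matrix trace on a single simple summand and is therefore invariant under any permutation of the corresponding basis; translated into path probabilities on $\Gamma$, this is precisely the centrality condition.

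\textbf{Step 3 (homeomorphism and extremality).} Both maps are manifestly affine. Continuity follows from the finiteness of each $V_n$: weak convergence $\nu_k \to \nu$ is equivalent to pointwise convergence of the marginals $v \mapsto \nu_k(X_n = v)$, which by the formula is equivalent to pointwise convergence of $\tau_{\nu_k}$ on the dense $*$-subalgebra $A_\infty$, hence to weak-$*$ convergence of the associated states. As an affine homeomorphism of Choquet simplices, $\Phi$ maps extreme points to extreme points, and combining this with the identification of $\partial \Gamma$ with the extremal points of $\cM_c(\Gamma)$ yields the extremal correspondence. I expect the main bookkeeping challenge to be Step 1 — correctly tracking the interplay of dimensions, multiplicities, and path counts — after which Steps 2 and 3 are essentially formal consequences.
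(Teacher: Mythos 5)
The paper does not give a proof of this theorem; it is stated as well-known with a reference to \cite{BO16}, so there is no "paper proof" to compare against. Your argument is the standard one and is essentially correct, but one point in Step 2 deserves tightening: the identity from Step 1 is a relation between the one-dimensional marginals $\nu(X_n = v)$, and that alone is not enough to invoke Kolmogorov extension, since you must specify the joint finite-dimensional distributions, not just the marginals of each $X_n$. The natural (and forced) choice is to declare that every path $\emptyset = v_0, v_1, \dots, v_n = v$ has cylinder probability $\alpha^{(n)}_v / \dim(v)$; centrality is then built into the definition rather than something to be verified afterward, and the marginal relation from Step 1 is exactly the Kolmogorov consistency condition for this family of cylinder distributions. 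Your remark about permutation invariance is really the justification for why this is the only admissible definition of the inverse: in a Bratteli-compatible system of matrix units for $A_\infty$, the minimal projections of $A_n$ lying inside the simple summand labelled by $v$ are indexed precisely by paths from $\emptyset$ to $v$, and they are all Murray--von~Neumann equivalent, so any tracial state assigns them the common value $\alpha^{(n)}_v/\dim(v)$. Phrased that way, it also makes transparent why $\Phi$ and $\Phi^{-1}$ are mutually inverse. With that reordering, Steps 1--3 are complete and correct, and the extremal correspondence follows formally from $\Phi$ being an affine homeomorphism of Choquet simplices as you state.
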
 

\begin{proof}[Sketch of the proof]
In order to check that $\tau_{\nu}$ is well defined, one needs to show that the traces $\tau_{\nu}^{(n)}$ defined by 
\[ \tau_{\nu}^{(n)}(x) = \sum_{i=1}^m \nu_n(v_i) \frac{\tau_i(x)}{\dim(v_i)}, \qquad (x \in A_n), \] satisfy $\tau_{\nu}^{(n)}|_{A_{n-1}} = \tau_{\nu}^{(n-1)}$ for all $n \geq 1$. However, a quick calculation shows that is exactly guaranteed by the centrality of the measure $\nu$ or equivalently the consistency of the coherent system $(\nu_n)_{n \geq 0}$. The injectivity of the map $\nu \mapsto \tau_{\nu}$ is then not hard to check. To show that this map is also surjective, one need only note that the restriction $\tau|_{A_n}$ of any tracial state on $C^*(A_{\infty})$ to the subalgebra $A_n$ admits a decomposition 
\begin{align*}
\tau|_{A_n} = \sum_{i=1}^m p_i^{(n)} \frac{\tau_i(x)}{\dim(v_i)},
\end{align*}
where $\sum_{i=1}^m p_i^{(n)} =1, p_i^{(n)} \geq 0$. The fact that the restriction of $\tau|_{A_n}$ to $A_{n-1}$ is nothing but $\tau|_{A_{n-1}}$ then yields that the family of probability measures defined by $\nu_n(v_i) := p_i^{(n)}, \ v_i \in V_n, $ forms a coherent system. Finally, checking that the map $\nu \mapsto \tau_{\nu}$ is continuous is once again straightforward, using e.g. Lemma \ref{lem.weakcoherent}. Since both tracial states and central measures form a Choquet simplex, the map $\nu \mapsto \tau_{\nu}$ is a homeomorphism.
\end{proof}

Thanks to the works of Vershik and Kerov \cite{VK81} \cite{VK82}, there is a well-developed machinery available to compute the minimal boundary of a branching graph $\Gamma$. For an ergodic central measure $\nu \in \partial \Gamma$ and connected vertices $v_n \in V_n, \ v_{n+1} \in V_{n+1}$, we will denote by $p_{\nu}(v_n,v_{n+1})$ the transition probability $\nu(X_{n+1}=v_{n+1} | X_{n}=v_{n} )$.  The following theorem is known as the \emph{(Vershik-Kerov-)ergodic method}, see e.g. \cite{VM15}.

\begin{theorem} \label{thm.ergodicmethod}
Let $\nu \in \partial\Gamma$ be an ergodic central measure on $(\Omega_{\Gamma}, \cF_{\Gamma})$.
\begin{itemize}
\item[(i)] For every finite path $(v_0 =\emptyset,v_1,\dots,v_n)$  and $\nu$-almost every infinite path $(\omega_i)_{i \geq 0}$, the sequence $\left(\frac{\dim(v_n,\omega_i)}{\dim(\omega_i)} \right)_{i \geq 0}$ has a limit and this limit is equal to
\begin{align*}
\lim_{i \to \infty} \frac{\dim(v_n,\omega_i)}{\dim(\omega_i)} = \frac{\nu(X_n = v_n)}{\dim(v_n)} = \prod_{l=0}^{n-1} p_{\nu}(v_l,v_{l+1}).
\end{align*}
\item[(ii)] Similarly, for every edge $(v,w)$ on $\Gamma$ and $\nu$-almost every infinite path $(\omega_i)_{i \geq 0}$, the sequence $\left(\frac{\dim(w,\omega_i)}{\dim(v,\omega_i)} \right)_{i \geq 0}$ has a limit and this limit is equal to
\begin{align*}
\lim_{i \to \infty} \frac{\dim(w,\omega_i)}{\dim(v,\omega_i)} =  p_{\nu}(v,w).
\end{align*}
\end{itemize}
 
\end{theorem}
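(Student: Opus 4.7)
The strategy is to exhibit the ratios $\frac{\dim(v_n, X_i)}{\dim(X_i)}$ as a reverse martingale whose almost sure limit, by ergodicity of $\nu$, is forced to be the constant $\nu(A)$ for the appropriate cylinder event $A$. This is the classical Vershik-Kerov argument, which I would carry out as follows.

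For part (i), fix a finite path $(v_0 = \emptyset, v_1, \dots, v_n)$ and set $A = \{X_1 = v_1, \dots, X_n = v_n\}$. For $i \geq n$, let $\cF_i = \sigma(X_j : j \geq i)$, a decreasing family of $\sigma$-algebras in $i$. Using centrality one computes for every cylinder $C = \{X_i = u_i, \dots, X_m = u_m\}$ that $\nu(A \cap C)/\nu(C) = \dim(v_n, u_i)/\dim(u_i)$, independently of $u_{i+1}, \dots, u_m$; this simultaneously gives the Markov property $\nu(A \mid \cF_i) = \nu(A \mid X_i)$ and the explicit formula
\[ M_i := \E_\nu[\1_A \mid \cF_i] = \frac{\dim(v_n, X_i)}{\dim(X_i)}. \]
Since $(\cF_i)_{i\geq n}$ is decreasing, $(M_i)$ is a reverse martingale, so Doob's reverse martingale convergence theorem yields $M_i \to \E_\nu[\1_A \mid \cF_\infty]$ almost surely, where $\cF_\infty = \bigcap_i \cF_i$. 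A standard approximation argument identifies $\cF_\infty$ modulo $\nu$-null sets with the $\sigma$-algebra of $\cT$-invariant events, so the assumed $\cT$-ergodicity of $\nu$ forces $\cF_\infty$ to be $\nu$-trivial. Hence the limit equals the constant $\nu(A) = \nu(X_n = v_n)/\dim(v_n)$, the last equality being centrality once more. The product expansion $\prod_{l=0}^{n-1} p_\nu(v_l, v_{l+1})$ then follows because centrality makes $(X_l)$ a Markov chain under $\nu$, so $\nu(A)$ factors as a product of one-step transition probabilities.

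Part (ii) reduces to (i). On the common full-measure set where both limits exist, applying (i) to a path ending at $v$ and to its one-step extension ending at $w$ and taking the ratio yields
\[ \lim_i \frac{\dim(w, \omega_i)}{\dim(v, \omega_i)} = \frac{\nu(X_{n+1}=w)\,\dim(v)}{\dim(w)\,\nu(X_n = v)}, \]
and the right-hand side equals $p_\nu(v, w)$ by the centrality identity $\nu(X_n = v, X_{n+1} = w) = \dim(v)\,\nu(X_{n+1}=w)/\dim(w)$. The main subtlety in the whole argument is the identification of the tail $\sigma$-algebra with the $\cT$-invariant $\sigma$-algebra modulo $\nu$-null sets; this is classical but requires a careful approximation of $\cT$-invariant sets by cylinders supported above level $i$. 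Once that point is handled, the rest is a textbook application of Doob's theorem.
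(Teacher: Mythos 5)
The paper does not actually prove Theorem \ref{thm.ergodicmethod}; it is stated as the known Vershik--Kerov ergodic method with a citation to \cite{VM15} (and implicitly to \cite{VK81,VK82}), so there is no in-paper proof to compare against. Your reverse-martingale argument is the standard one and is correct: the centrality identity $\nu(A\cap C)/\nu(C)=\dim(v_n,u_i)/\dim(u_i)$ for cylinders $C=\{X_i=u_i,\dots,X_m=u_m\}$ does indeed pin down $\E_\nu[\1_A\mid\cF_i]=\dim(v_n,X_i)/\dim(X_i)$, reverse-martingale convergence gives the a.s.\ limit, and ergodicity identifies the limit as the constant $\nu(A)$. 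The product formula and part (ii) then follow by the chain rule and the Markov property that centrality provides, exactly as you say. One small remark: you do not really need the identification of $\cF_\infty$ with the $\cT$-invariant $\sigma$-algebra modulo null sets (which is true for Bratteli path spaces, but is the one place where some care is needed). It is simpler to observe that the limit random variable $Y=\lim_i\dim(v_n,X_i)/\dim(X_i)$ is manifestly $\cT$-invariant as a function --- two tail-equivalent paths produce the same sequence of ratios eventually --- so ergodicity forces $Y$ to be $\nu$-a.s.\ constant directly, and $\E_\nu[Y]=\nu(A)$ identifies the constant. This sidesteps the approximation argument entirely while yielding the same conclusion.
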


The above theorem identifies the minimal boundary as a subset of another space of central measures, the so-called \emph{Martin boundary}, see \cite{BO16}. The ergodic method does not make any statements on whether or not the measures obtained from it are in fact ergodic (they need not be) and one typically has to check this 'by hand' for the example in question.

\subsection{Random walk interpretation of the boundary of a pascalized graph} \label{subsec.walkinterpretation}

If one is interested in the boundary $\partial \cP(\Gamma)$ of the pascalization of a branching graph $\Gamma$, it can be difficult to target this problem directly as the pascalized graph can be significantly larger than the original one. However, it is possible to reduce the study of the boundary $\partial \cP(\Gamma)$ to a problem on $\Gamma$ if one notes the following, see e.g. \cite{VM15}.

By an \emph{infinite (rooted) walk} on $\Gamma$, we mean a sequence of vertices $(v_n)_{n\geq 0}$  such that $v_n$ and $v_{n+1}$ are connected by an edge for all $n \geq 0$. We turn the space $\cW_{\Gamma}$ of walks on $\Gamma$ that start at $\emptyset$ into a Borel space by equipping it with the product $\sigma$-algebra $\cE_{\Gamma}$ inherited from the inclusion $\cW_{\Gamma} \subset \prod_{n\geq 1} V_{\leq n}, \ V_{\leq n} = \cup_{l \leq n \atop l =  n \ \mathrm{mod} \ 2} V_l $. The proof of the following result is straighforward.

\begin{lemma} \label{lem.pathwalkident}
The projection $\pi:((n,w_n))_{n \geq 0} \mapsto (w_n)_{n \geq 0}$ yields a one-to-one correspondence between infinite paths on $\cP(\Gamma)$ and rooted walks on $\Gamma$. More strongly, this map is an isomorphism of Borel spaces $(\Omega_{\cP(\Gamma)},\cF_{\cP(\Gamma)}) \cong (\cW_{\Gamma},\cE_{\Gamma})$. 
\end{lemma}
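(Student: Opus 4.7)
The plan is to verify that the first coordinate $n$ in a vertex $(n,v) \in \cP(V)_n$ carries no genuine information once we know the starting point $\emptyset$, so that $\pi$ amounts to discarding redundant data. First I would check that $\pi$ sends $\Omega_{\cP(\Gamma)}$ into $\cW_\Gamma$: given a path $((n,w_n))_{n \geq 0}$, the definition of pascalization forces $w_n$ and $w_{n+1}$ to lie on neighbouring levels of $\Gamma$ and to be connected by an edge there (edges in $\cP(\Gamma)$ exist only in that case), and since paths start at the root $(0,\emptyset)$ we have $w_0 = \emptyset$. Hence $(w_n)_{n \geq 0}$ is indeed a rooted walk on $\Gamma$.

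Next I would construct the inverse. Given $(w_n)_{n \geq 0} \in \cW_\Gamma$ with $w_n \in V_{k_n}$, the fact that $\Gamma$ is bipartite with edges only between adjacent levels yields $|k_{n+1}-k_n|=1$ for all $n$; starting from $k_0=0$, induction gives $k_n \leq n$ and $k_n \equiv n \bmod 2$. Therefore $(n,w_n) \in \cP(V)_n$ is well-defined, and by the definition of the pascalized edge set, successive vertices $(n,w_n),(n+1,w_{n+1})$ are joined in $\cP(\Gamma)$ by exactly the edges joining $w_n$ and $w_{n+1}$ in $\Gamma$. The map $(w_n)_{n \geq 0} \mapsto ((n,w_n))_{n \geq 0}$ is clearly a two-sided inverse to $\pi$, establishing the bijection.

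Finally I would upgrade the bijection to a Borel isomorphism by checking measurability on cylinder sets, which generate both $\sigma$-algebras. A cylinder $\{x \in \Omega_{\cP(\Gamma)} \ : \ X_n(x) = (n,v)\}$ is sent by $\pi$ to $\{(w_k) \in \cW_\Gamma \ : \ w_n = v\}$, a cylinder in $\cE_\Gamma$; conversely, $\pi^{-1}$ sends the cylinder $\{w_n = v\}$ (with $v \in V_k$, $k \equiv n \bmod 2$, $k \leq n$) to the cylinder $\{X_n = (n,v)\}$. Thus both $\pi$ and $\pi^{-1}$ are measurable.

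There is no serious obstacle here: the content of the lemma is entirely bookkeeping, and the only mildly nontrivial observation is the parity congruence $k_n \equiv n \bmod 2$, which is the reason the level label $n$ in $\cP(V)_n$ may be dropped without loss of information.
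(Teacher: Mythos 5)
Your argument is correct and complete. The paper itself offers no proof (it merely remarks that the result is ``straightforward''), and your proof supplies exactly the expected verification: the parity bookkeeping $k_n \equiv n \bmod 2$ shows the level index $n$ is recoverable from the walk, giving the bijection, and the cylinder-set computation upgrades it to a Borel isomorphism.
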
 

\begin{proof}
The fact that the map $\pi$ is bijective is trivial. To see that $\pi$ respects the Borel structure, we simply note that since all $\cF_{\cP(\Gamma)}$ is generated by cylinder sets $\{x \in \Omega_{\cP(\Gamma)} \ ; \ x_0=(0,\emptyset), x_1 = (1,w_1) ,\dots, x_n= (n,w_n) \}$ where $(0,\emptyset) \nearrow (1,w_1) \nearrow \dots \nearrow (n,w_n)$ is an arbitrary finite path on $\cP(\Gamma)$. Similarly, by definition, $\cE_{\Gamma}$ is generated by cylinders $\{y \in \cW_{\Gamma} \ ; \ y_0=\emptyset, y_1 = w_1 ,\dots, y_n= w_n \}$ for arbitrary finite rooted walks $(\emptyset,w_1,\dots,w_n)$ on $\Gamma$. As $\pi$ maps cylinder sets onto cylinder sets, the assertion follows. 
\end{proof}

Lemma \ref{lem.pathwalkident} asserts in particular that every central measure on $(\Omega_{\cP(\Gamma)},\cF_{\cP(\Gamma)})$ can be interpreted as a random walk on $\Gamma$. This random walk can be time-inhomogeneous in the sense that transition probabilities $p((n,w_n),(n+1,w_{n+1}))$ can depend on $n$ and not only on the vertices $w_n, w_{n+1}$ themselves. Because of Lemma \ref{lem.pathwalkident}, we will freely switch between the interpretations of sequences $((n,w_n))_{n \geq 0}$ as paths on $\cP(\Gamma)$ and infinite walks on $\Gamma$.

\subsection{Preliminaries on diagram algebras}

The branching graphs to be discussed in this article will always be derived from towers of finite-dimensional algebras whose basis will be given by \emph{set partitions} and whose operations can be encoded diagrammatically. These algebras originate from Schur-Weyl duality of compact groups and have been used in \cite{BS09} by Banica and Speicher to introduce a class of compact quantum groups which they called \emph{easy}. These easy (or partition) quantum groups have also been extensively studied in \cite{We13}. Together, the articles \cite{BS09} and \cite{We13} have achieved a full classification of three important subclasses of easy quantum groups: the \emph{free} easy quantum groups \cite{BS09} \cite{We13}, the \emph{half-liberated} easy quantum groups \cite{We13} and the easy (classical) groups \cite{BS09}. Later, the full classification of all easy quantum groups was finalized in \cite{RaWe16}. The main focus of this paper is the subclass of free easy quantum groups. The branching graphs associated to easy groups will be addressed in the forthcoming article \cite{Wa20}. 

Let us recall that by a set partition with $k$ upper and $l$ lower points, we mean a decomposition of the set $\{1,2,\dots,k,1',\dots,l'\}$ into disjoint subsets (the \emph{blocks} of the partition). Order the boundary points as $1 < 2 < \dots < k < l' < (l-1)' < \dots < 1'$. A set partition is called \emph{noncrossing} if w.r.t. this order, the following holds: if $a<b<c<d$ and $a,c$ belong to the same block and $b,d$ belong to the same block, then all of them must belong to the same block. A diagrammatic depiction of a noncrossing partition can be found below in Figure \ref{fig.partition}. The set of all partitions with $k$ upper and $l$ lower points will be denoted by $\Part(k,l)$ and the set of noncrossing partitions  with $k$ upper and $l$ lower points will be denoted by $\mathrm{NC}(k,l)$. See \cite{BS09} or \cite[Definition 1.4]{We13} for more details on the operations in the following definition.

\begin{figure}[h!]
\begin{center}
\includegraphics[scale=0.3]{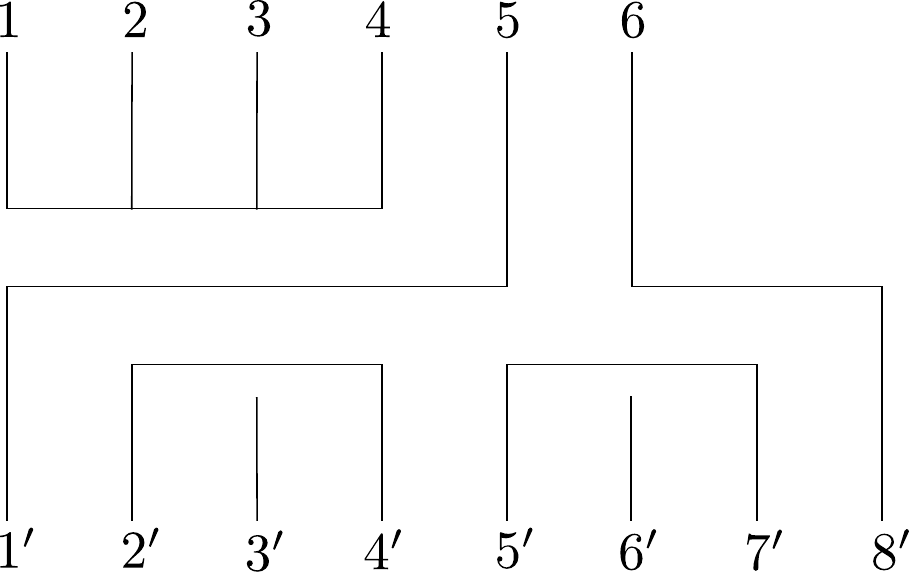}
\end{center}
\caption{\label{fig.partition} A partition in  $\mathrm{NC}(6,8)$.}
\end{figure}

\begin{definition}
A \emph{category of partitions} $\cC$ is a collection $(\cC(k,l))_{k,l \in \N}$ of subsets $\cC(k,l) \subset \Part(k,l)$ such that
\begin{itemize}
\item $\cC(1,1)$ contains the identity partition that connects the upper and the lower point.
\item the family is invariant under the category operations tensor product (i.e. vertical concatenation of diagrams), rotation, involution (i.e. reflecting a diagram along a horizontal line in the middle) and composition (i.e. vertical concatenation of compatible partitions $p_1 \in \cC(l,m), \ p_2 \in \cC(k,l)$, see Figure \ref{fig.multiplication} below).
\end{itemize} 
\end{definition}
  
\begin{figure}[h!]
\begin{center}
\includegraphics[scale=0.4]{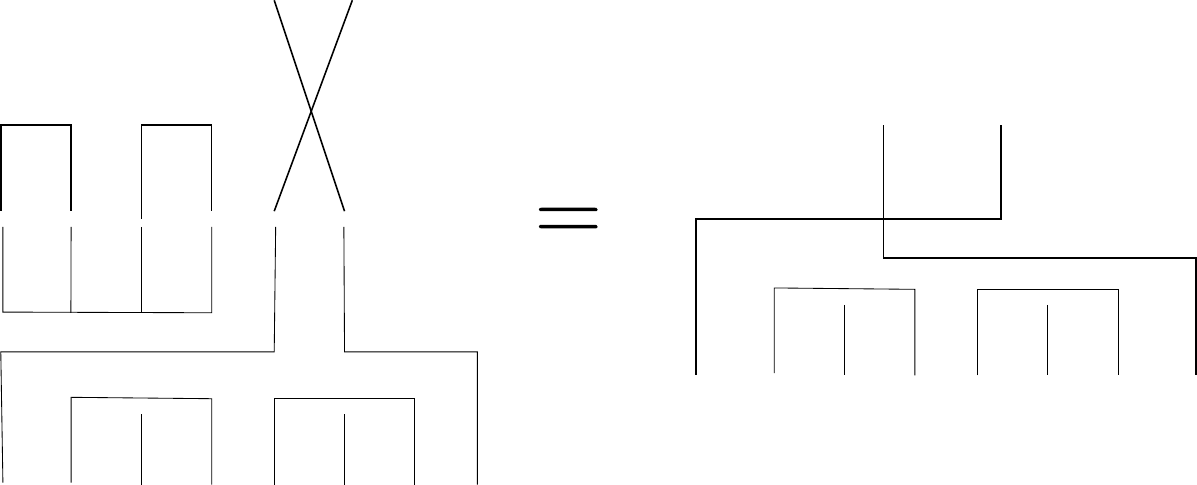}
\end{center}
\caption{\label{fig.multiplication} Vertical concatenation of partitions $p_1 \in \Part(6,8)$ and $p_2 \in \Part(2,6)$.}
\end{figure}

Given the pair $(\cC,\delta)$ of a category of partitions $\cC$ and a loop parameter $\delta$ and $k \geq 0$, we shall consider the free $\C$-vector space
\begin{align*}
 A_{(\cC,\delta)}(k) := \spann \{ e_p, \ p \in \cC(k,k)  \}
\end{align*}
spanned linearly by the elements of $\cC(k,k)$. We can define a multiplication on $A_{(\cC,\delta)}(k)$ by specifying the product of two basis vectors $e_{p_1}, e_{p_2},$ and extending this multiplication bilinearly to the whole space. To define $e_{p_1} \cdot e_{p_2}$, we concatenate $p_1$ and $p_2$ vertically, that is we draw $p_2$ on top of $p_1$ and then connect blocks of $p_1$ and $p_2$ that meet in the middle. We then erase every closed loop appearing in the middle of the picture that is not connected to any upper or lower points, so that we obtain a new partition $p_3$. We also record the number of loops that we erased and define the multiplication of base vectors $e_{p_1}\cdot e_{p_2} := \delta^{\# erased \ loops} e_{p_3}$. As mentioned above, the involution $p^*$ of a diagram $p$ is given by reflecting $p$ along a horizontal line in the middle, whence we get an involution $e_p^* = e_{p^*}$ on $A_{(\cC,\delta)}(k)$.

\begin{definition} \label{def.diagramalgebra}
The involutive algebra $A_{(\cC,\delta)}(k)$ defined in the previous paragraph is called the \emph{k-th diagram algebra of the pair} $(\cC,\delta)$ or alternatively the \emph{k-th diagram algebra of} $\cC$ \emph{at loop parameter} $\delta$.
\end{definition}

Recall that a finite-dimensional algebra $A$ is semisimple if and only if it possesses a \emph{positive} involution, i.e. one for which $x^*x=0$ implies $x=0$, see e.g. \cite[Appendix II]{GHJ89}. In addition, if there is a positive involution on $A$, there is also a unique $C^*$-norm on $A$ turning it into a $C^*$-algebra. Lastly, if $A \subset B$ is an inclusion of finite-dimensional semisimple algebras, for any positive involution on $A$, there is a positive involution on $B$ extending it. Therefore, when given an inductive sequence of finite-dimensional semisimple algebras, we can always consider its inductive limit in the category of $C^*$-algebras which is the viewpoint we want to take here. 

\begin{definition}
Let $(\cC,\delta)$ be a category of partitions at loop parameter $\delta$ and assume that for all $k\geq 1$, $A_{(\cC,\delta)}(k)$ is semisimple.  Let $\alpha_k: A_{(\cC,\delta)}(k) \to A_{(\cC,\delta)}(k+1)$ be the embedding obtained by mapping $e_p \to e_{p'}$, where $p'$ is obtained from $p$ by adding a through-string on the right of the diagram. The inductive limit (in the category of $C^*$-algebras) w.r.t. these embeddings will be denoted $A_{(\cC,\delta)}(\infty)$.   
\end{definition}

\begin{figure}[h!]
\begin{center}
\includegraphics[scale=0.3]{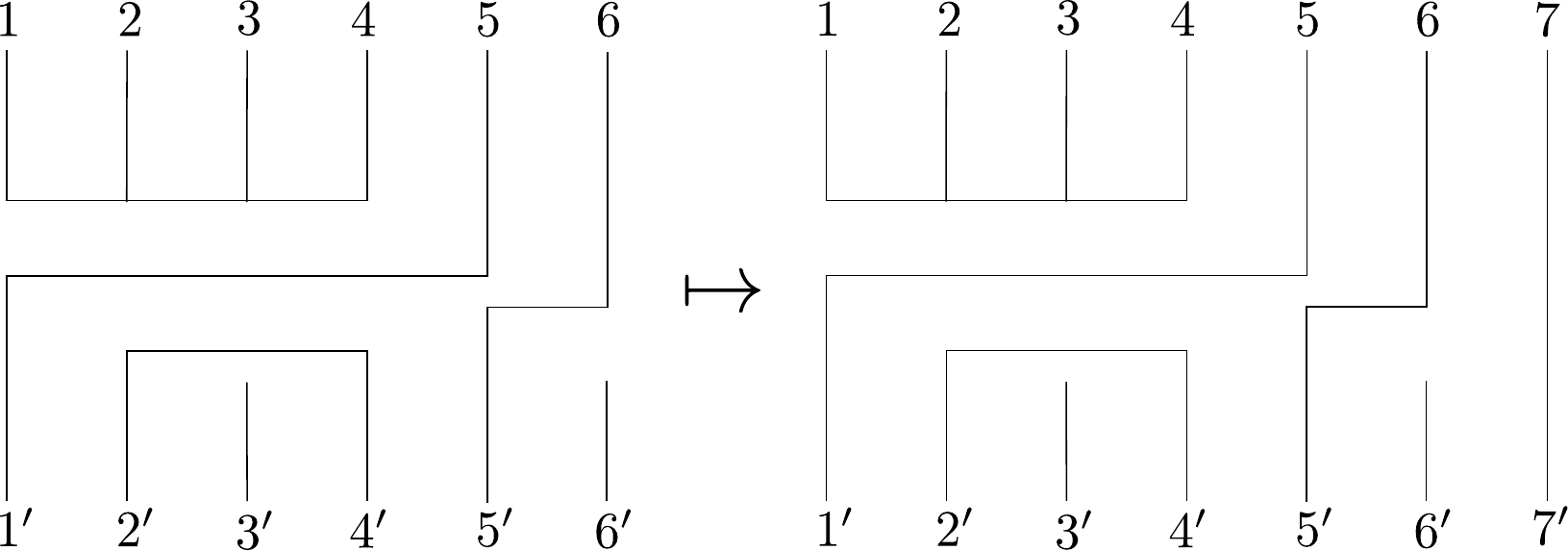}
\end{center}
\caption{\label{fig.embedding} The graphical embedding from $\cC(6,6)$ into $\cC(7,7)$ by adding a through-string.}
\end{figure}

Note that the set $\cC(\infty):= \bigcup_{k \geq 1} \cC(k,k)$ yields a natural basis for $A_{(\cC,\delta)}(\infty)$. Also we remark that by \cite[Theorem 5.13]{HR05}, the set of generic parameters (that is parameter values for which $A_{(\cC,\delta)}(k)$ is semisimple for all $k \geq 0$) is always co-countable. For more precise results on genericity of the loop parameter, see \cite{FM20}.

\subsection{The classification of free partition quantum groups} \label{subsec.classfreeeasyqg}

We will now recall the classification of categories of noncrossing set partitions due to Banica, Speicher \cite{BS09} and Weber \cite{We13}. These categories describe the representation theory of the \emph{free} easy quantum groups in the setting of Banica and Speicher and there are exactly seven of them. For details on this and the exact relationship between categories of partitions and compact quantum groups, we refer the reader to \cite{BS09} \cite{We13}.

\begin{theorem}[\cite{BS09},\cite{We13}] \label{thm.freecategories}
There are exactly seven categories of noncrossing partitions, namely
\begin{enumerate}
\item The category $\cC_{S^+} = \mathrm{NC}$ of all noncrossing partitions, corresponding to the quantum permutation groups $S_N^+$;
\item The category $\cC_{O^+} = \mathrm{NC}_2$ of all noncrossing pair partitions, corresponding to the free orthogonal groups $O_N^+$;
\item The category $\cC_{B^+}$ of all noncrossing partitions with blocks of size one or two, corresponding to the bistochastic quantum groups $B_N^+$;
\item The category $\cC_{H^+}$ of all noncrossing partitions with blocks of even size, corresponding to the hyperoctahedral quantum groups $H_N^+$;
\item The category $\cC_{S'^{+}}$ of all noncrossing partitions with an even number of blocks of odd size, corresponding to the modified quantum permutation groups $S_N'^{+}$;
\item The category $\cC_{B'^{+}}$ of all noncrossing partitions with an even number of blocks of size one and an arbitrary number of blocks of size two, corresponding to the modified bistochastic quantum groups $B_N'^{+}$;
\item The category $\cC_{B^{\#+}}$ of all noncrossing partitions with boundary points labelled by alternating symbols $a,b$ with an even number of blocks of size one and an arbitrary number of blocks of size two. Each block of size two has to connect a point labelled $a$ to a point labelled $b$. This category  corresponds to the freely modified bistochastic quantum groups $B_N^{\#+}$.
\end{enumerate}
\end{theorem}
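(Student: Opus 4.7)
The plan is to establish the classification in two phases: first, verifying directly that each of the seven listed families defines a category of noncrossing partitions, and second, showing that no further category exists by an invariant-based case analysis.

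For the first phase, I would check for each of $\cC_{S^+}, \cC_{O^+}, \cC_{B^+}, \cC_{H^+}, \cC_{S'^+}, \cC_{B'^+}$ and $\cC_{B^{\#+}}$ that the identity partition belongs to $\cC_\bullet(1,1)$ and that the defining constraint is preserved under tensor product, rotation, involution, and composition. The only nontrivial operation is composition, where blocks meeting in the middle of two stacked diagrams can merge, so the block-size (or block-parity) constraint must be shown to propagate. For example, in $\cC_{H^+}$ two even-sized blocks merge into an even-sized block; in $\cC_{S'^+}$, one checks that composition preserves the parity of the number of odd-sized blocks (the removed loops contributing an even amount). For $\cC_{B^{\#+}}$ one has the additional bookkeeping of the alphabet $\{a,b\}$, and the key point is that alternation forces every pair block to connect one $a$ to one $b$, a condition obviously preserved under all operations.

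For the completeness phase, the strategy is to associate to any hypothetical category $\cC \subset \mathrm{NC}$ two invariants: the set $\cB(\cC) = \{|B| : B \text{ is a block of some } p \in \cC\}$ of occurring block sizes, and, when singletons are present, a possible parity constraint on their total count. A chain of generation lemmas, each proved by explicit rotations and compositions, shows that these invariants determine $\cC$ uniquely. Typical lemmas are: (a) given a block of size $k \geq 2$ together with the identity and the pair partition, repeated composition produces blocks of every size $k, k+2, k+4,\dots$; (b) in the absence of singletons, a block of odd size $\geq 3$ can be rotated and capped with pair partitions to yield a singleton, contradicting the assumption and therefore forcing all blocks to have even size; (c) singletons combined with pair partitions generate all of $\mathrm{NC}$. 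An induction on the number of blocks then shows that every noncrossing partition compatible with the invariants actually belongs to $\cC$, and the resulting list of possibilities matches exactly the seven cases above.

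The main obstacle is the delicate combinatorial bookkeeping required in the generation lemmas, which must be done diagrammatically and must also rule out "hybrid" categories combining features of several examples (e.g.\ one with only even blocks except for a single prescribed odd block size). The labelled case $\cC_{B^{\#+}}$ is the most subtle: one must verify that no further labelled categories arise beyond those which simply forget the labels, and this uses the rigidity of the $a/b$ alternation to argue that any admissible labelled block other than the prescribed one would either force a crossing or break the alphabet structure under composition. Putting the two phases together gives the exhaustive list of seven categories claimed in the theorem.
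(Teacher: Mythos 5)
The paper does not prove this theorem; it quotes it from Banica--Speicher \cite{BS09} and Weber \cite{We13}, so there is no in-paper proof to compare against. Your two-phase plan (verify that each of the seven families is a category, then argue completeness by invariants) is the right skeleton and matches the spirit of Weber's case analysis, and phase one is unobjectionable. The problem is in phase two: the invariant system you commit to --- the set of occurring block sizes $\cB(\cC)$ together with a parity constraint on the number of singletons --- is provably too coarse to finish the argument.

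Concretely, it cannot separate $\cC_{S^+}$ from $\cC_{S'^{+}}$: the defining constraint of $\cC_{S'^{+}}$ is the parity of the number of \emph{odd-sized} blocks, not of singletons. The partition $\{1\},\{2,3,4\}$ of four points has one singleton but two odd blocks, so it lies in $\cC_{S'^{+}}$; hence singleton-count parity is unconstrained in both $\cC_{S^+}$ and $\cC_{S'^{+}}$, and both have block-size set $\{1,2,3,\dots\}$. Worse, your invariants cannot separate $\cC_{B'^{+}}$ from $\cC_{B^{\#+}}$: both have block sizes $\{1,2\}$ and an even number of singletons, and the actual distinction is a \emph{positional} constraint on which points a pair may join relative to the alternating labelling, which neither of your invariants sees. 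Any complete classification must therefore introduce a further invariant of positional type; in Weber's argument this is done by testing membership of concrete small partitions (the singleton $\uparrow$, the double singleton $\uparrow \ot \uparrow$, the four-block, and a ``positioner'' of the form $\uparrow \ot \sqcap \ot \uparrow$ versus its rotate), and the case split runs on exactly which of these lie in $\cC$. Relatedly, your treatment of $\cC_{B^{\#+}}$ as a separately handled ``labelled'' category is a conceptual misreading: it is a genuine category of ordinary noncrossing partitions, and the alternating $a,b$-alphabet is only a bookkeeping device describing which pairs are admissible. So the ``main obstacle'' you flag at the end is not merely tedious combinatorics; with the invariants as stated, the completeness argument would actually stop short rather than go through.
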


We can right away observe the following result:

\begin{lemma} \label{lem.modifiedthesame}
For all $k \geq 0$, we have $A_{(\cC_{S^+},\delta)}(k) = A_{(\cC_{S^{'+}},\delta)}(k)$ and $A_{(\cC_{B^+},\delta)}(k) = A_{(\cC_{B^{'+}},\delta)}(k)$.
\end{lemma}

\begin{proof}
We simply observe that the difference between the categories of partitions $\cC_{S^{+'}}$ and $\cC_{S^+}$ is not visible in the sets $\cC(k,k), \ k\geq 0$, as the total number $k+k = 2k$ of boundary points is even. Indeed, the defining rule for $\cC_{S^{+'}}$ (see Theorem \ref{thm.freecategories}) which states that the number of blocks of odd size must be even, is automatically satisfied for all noncrossing partitions with an even number of boundary points. Since the diagram algebras $A_{(\cC_{S^+},\delta)}(k), A_{(\cC_{S^{'+}},\delta)}(k)$ belonging to these categories only make reference to the sets $\cC(k,k), \ k\geq 0$, they must be the same. As a consequence, also the branching graphs associated to these categories must be the same. This argument is also applicable to the categories $\cC_{B^+}$ and $\cC_{B^{+'}}$, which therefore produce the same diagram algebras and thus yield the same branching graph.
\end{proof}

\section{Traces on diagram algebras associated to free easy quantum groups} \label{sec.free}

We will now approach the new results of this article, namely the description of the traces on the algebras $A_{(\cC,\delta)}(\infty)$ at the generic parameter. We will begin with the diagram algebra associated to the category $\cC_{O^+} = \mathrm{NC}_2$ of all non-crossing pair partitions, which is better known as the \emph{infinite Temperley-Lieb algebra} $\TL(\infty,\delta)$. The main result of Subsection \ref{sec.templieb} is Theorem \ref{thm.randomballot}, which rephrases the classification of extremal traces on $\TL(\infty,\delta)$ as a classification result for random ballot paths. As explained in Remark \ref{rem.modifiedthesame}, Theorem \ref{thm.randomballot} will also settle the  trace classification problem for the examples $\cC_{S^+}$ and $\cC_{S'^{+}}$. The trace classification for the diagram algebras of the categories of partitions $\cC_{B^+}$ and $\cC_{B'^{+}}$ or equivalently, the classification of central random Motzkin paths, will be addressed in Subsection \ref{sec.freebistochasic}. Before stating the classification result (Theorem \ref{thm.randomMotzkinclass}), we will first carry out the algebraic legwork of computing the relevant branching graph which yields dimension formulas for irreducible representations of the quantum groups $B_N^+$ as a side product. In Subsection \ref{sec.freelymodified} , we compute the branching graph associated to the category $\cC_{B^{\#+}}$ and in Subsection \ref{sec.FussCat}, we review known results on the branching graph of $\cC_{H^+}$. There we will also deal with more general \emph{Fuss-Catalan-algebras}. In Subsection \ref{sec.FussCat}, we will moreover deduce new dimension formulas for irreducible representations of the quantum groups $B_N^{\#+}$. The trace classification for the remaining two examples $\cC_{B^{\#+}}$ and $\cC_{H^+}$ is in our eyes the most interesting and it will follow from the stochastic results on the boundary of the Fibonacci tree that we will obtain in Section \ref{sec.deFintheorem}.

\subsection{Traces on the infinite-dimensional Temperley-Lieb algebra} \label{sec.templieb}

The algebra spanned by noncrossing pair partitions in $\mathrm{NC}_2(k,k)$ between $k$ upper and $k$ lower points is the \emph{Temperley-Lieb algebra} $\TL(k,\delta)$ and it is typically defined as the associative unital algebra generated by elements $e_i, \ i = 1,\dots,k-1$ satisfying the relations 
\begin{align*}
e_i^2 &= e_i \qquad &i=1,\dots k-1 \\
e_i e_j &= e_j e_i \qquad &|i-j| > 1 \\
\delta e_i e_{j} e_i &= e_i \qquad &|i-j| = 1.
\end{align*}
The set of generic parameter values $\delta \in \C$ for the Temperley-Lieb algebras was determined by Jones \cite{Jo83} and contains in particular the interval $[2,\infty)$. If $\delta \in [2,\infty)$, the declaration $e_i^* = e_i, \ i=1,\dots, k-1$ defines a positive involution on $\TL(k,\delta)$ for all $k$, thus turning the generating idempotents into projections. 
The $k$-th Temperley-Lieb algebra admits a diagrammatical interpretation as the $k$-th diagram algebra of the category of partitions $\mathrm{NC}_2$ at loop parameter $\delta$, see e.g. \cite{BiJo95}. In this interpretation, the generator $e_i$ is identified with the element $\delta^{-1/2} e_{p_i}$, where $p_i$ is the partition that consists of the blocks $\{i,i+1\}, \{i',(i+1)'\}$ and $\{j,j' \}$ for $j\neq i,i+1$, see Figure \ref{fig.TLprojection} below.

\begin{figure}[h!]
\begin{center}
\includegraphics[scale=0.3]{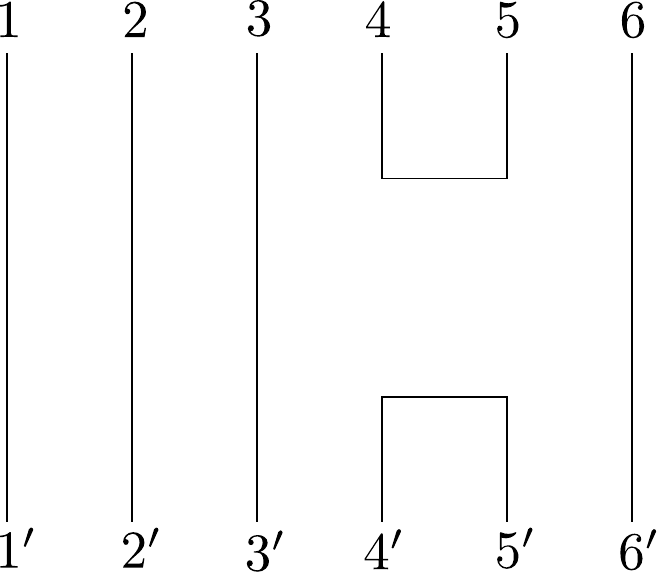}
\end{center}
\caption{\label{fig.TLprojection} The diagram representing the generator $e_4 \in \TL(6,\delta)$ up to a constant $\delta^{-1/2}$.}
\end{figure}

The $k$-th Temperley-Lieb algebra $\TL(k,\delta)$ embeds naturally into $\TL(k+1,\delta)$ by mapping $e_i \in \TL(k,\delta)$ to $e_i \in \TL(k+1,\delta)$ or equivalently by adding a through string on the right of every diagram in the diagrammatical interpretation. The inductive limit algebra under these embeddings will be denoted $\TL(\infty,\delta)$. The images of $\TL(k,\delta)$ at parameter value $\delta=N = 2,3\dots$ under the Banica-Speicher representation in \cite{BS09}, yield the endomorphism spaces the fundamental representations of the easy quantum groups $O_N^+$.

\subsubsection{The semi-Pascal graph} \label{subsubsec.semi-Pascal}

In the generic case, the Bratteli diagram of $\TL(\infty,\delta)$ does not depend on the choice of the parameter $\delta$ (as long as it is generic) and is given by the semi-Pascal graph, see Example \ref{ex.pascalization}. Let us label the vertices of $\cP(\N)_m$ by $(m,s), \ s=0,2,\dots,m$ if $m$ is even and by $(m,s), \ s=1,3,\dots m,$ if $m$ is odd such that we have an edge between $(m,s)$ and $(m+1,t)$ if and only if $t \in \{s-1,s+1\}$. A picture of the first few levels of the semi-Pascal graph $\cP(\N)$ is drawn in Figure \ref{fig.semiPascal} below.

\begin{figure}[h!]
\begin{center}
\includegraphics[scale=0.4]{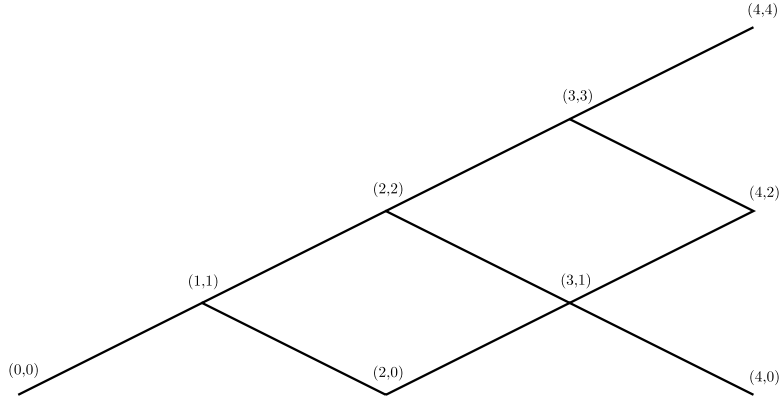}
\end{center}
\caption{\label{fig.semiPascal} The first five levels of the semi-Pascal graph.}
\end{figure}

From the description of the Bratteli diagram as the semi-Pascal graph, it is immediate that its infinite rooted paths exactly conincide with infinite ballot paths starting at $(0,0)$. Central measures on the semi-Pascal graph therefore translate into random ballot paths such that the conditional probability measure given that the random path passes through $(n,k)$ is the uniform distribution on $\Ba((0,0),(n,k))$.

\subsubsection{Traces on the Temperley-Lieb algebra in terms of random ballot paths} \label{subsubsec.Temptraces}

The boundary of the semi-Pascal graph (and thus the trace simplex on $\TL(\infty,\delta)$) has been studied in the PhD thesis of Wassermann \cite{Was81}, where it is shown that  the extremal trace simplex is homeomorphic to the half interval $[1/2,1]$. An algebraic explanation of this goes as follows. $\TL(\infty,\delta)$ is a quotient of the infinite Hecke algebra $H(\infty,q), \ q+q^{-1} = \delta$, a $q$-deformation of the symmetric group algebra. The extremal traces/ II$_1$-factor representations of $H(\infty,q)$ coincide with those of $S_{\infty}$ as $C^*(H_q(\infty)) \cong C^*(S_{\infty})$. The only II$_1$-factor representations that factor through the quotient map onto the Temperley-Lieb algebra are the ones corresponding to the Thoma parameters satisfying $\alpha_1 + \alpha_2 = 1$. The extremal trace simplex is thus homeomorphic to the half interval $[1/2,1]$. \\ 

Recall that by Theorem \ref{thm.boundarytraces}, traces on $\TL(\infty,\delta)$ are in natural correspondence to central measures on the semi-Pascal graph which in turn correspond to random ballot paths by the previous subsection. We will now describe the ergodic central measures (and thus the extremal traces) explicitly.



Let $\lambda \in (1/2,1]$. Define the Markov chain $M^{\lambda}$ on the space $(\Omega_{\cP(\N)}, \cF_{\cP(\N)})$ of infinite rooted ballot paths with transition probabilities
\begin{align*}
p_{\lambda}((m,s),(m+1,s+1)) \ &= \ \frac{(1-\lambda)^{s+2} - \lambda^{s+2}}{(1-\lambda)^{s+1} - \lambda^{s+1}}, \\
p_{\lambda}((m,s),(m+1,s-1)) \ &= \ 1- \frac{(1-\lambda)^{s+2} - \lambda^{s+2}}{(1-\lambda)^{s+1} - \lambda^{s+1}}.
\end{align*}
For $\lambda=1/2$, define the transition probabilities of $M_{1/2}$ by 
\begin{align*}
p_{1/2}((m,s),(m+1,s+1)) \ &= \ \frac{1}{2} \cdot \frac{s+2}{s+1}, \\
p_{1/2}((m,s),(m+1,s-1)) \ &= \ 1- \frac{1}{2} \cdot \frac{s+2}{s+1}.
\end{align*}

Note that the Markov chains $M^{\lambda}$ are time-homogeneous in the sense that the transition probabilities do not depend on $m$.  Denote the law of $M^{\lambda}$ by $\nu_{\lambda}$. The following theorem can be interpreted as a one-sided de Finetti theorem for discrete stochastic processes conditioned to stay non-negative.

\begin{theorem} \label{thm.randomballot}
Every central rooted random ballot path $M$ with law $\nu_M$ is a mixing of the Markov chains $M^{\lambda}, \ \lambda \in [1/2,1]$, that is to say there is a probability measure $\mu$ on $[1/2,1]$ such that 
\[ \nu_M = \int_{1/2}^1 \nu_{\lambda} \ d\mu(\lambda). \]
\end{theorem}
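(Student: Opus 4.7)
The plan is to combine Choquet's theorem for the simplex $\cM_c(\cP(\N))$ with the Vershik--Kerov ergodic method to identify the minimal boundary $\partial \cP(\N)$ with the family $\{\nu_\lambda\}_{\lambda \in [1/2,1]}$. Since every central measure decomposes as an integral over the extreme points of this Choquet simplex, the theorem will follow once we establish that the ergodic central measures on the semi-Pascal graph are exactly the laws of the Markov chains $M^{\lambda}$.

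First I would verify that each $\nu_\lambda$ is a central probability measure by a direct telescoping argument. Setting $u_s := \lambda^{s+1} - (1-\lambda)^{s+1}$ (for $\lambda > 1/2$), the up-step transition probability reads $u_{s+1}/u_s$, while the three-term identity $u_{s+1} = u_s - \lambda(1-\lambda) u_{s-1}$ rewrites the down-step probability as $\lambda(1-\lambda) u_{s-1}/u_s$. Multiplying the step weights along any finite ballot path $(0, s_0 = 0), (1, s_1), \dots, (n, s_n)$ then telescopes to $(u_{s_n}/u_0)\cdot(\lambda(1-\lambda))^{(n-s_n)/2}$, a quantity that depends only on the endpoint $(n, s_n)$. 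The boundary case $\lambda = 1/2$ is handled by passing to the limit $\lambda \to 1/2^+$, which yields the stated formula $p_{1/2}((m,s),(m+1,s+1)) = \tfrac{1}{2}\cdot\tfrac{s+2}{s+1}$.

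Conversely, for any ergodic central measure $\nu \in \partial \cP(\N)$, Theorem \ref{thm.ergodicmethod}(ii) computes the transition probabilities as the limits $\lim_{i \to \infty} \dim((m+1,s+1), \omega_i)/\dim((m,s), \omega_i)$ along $\nu$-almost every path $\omega_i = (N_i, K_i)$. Since $\dim((m,s),(N,K)) = |\Ba((m,s),(N,K))|$, Theorem \ref{thm.numberballotpaths} expresses each of these terms as a difference of two binomials. By ergodicity and the exchangeability encoded in centrality, the relative height $K_i/N_i$ converges almost surely to a deterministic slope $2\lambda - 1$ for some $\lambda \in [1/2, 1]$. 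A careful binomial asymptotic along this direction then yields
\begin{align*}
\frac{|\Ba((m+1,s+1),(N_i, K_i))|}{|\Ba((m,s),(N_i,K_i))|} \longrightarrow \lambda \cdot \frac{1 - (\tfrac{1-\lambda}{\lambda})^{s+2}}{1 - (\tfrac{1-\lambda}{\lambda})^{s+1}} = \frac{u_{s+1}}{u_s},
\end{align*}
identifying $\nu$ with $\nu_\lambda$. It is crucial here to retain the \emph{reflection binomial} $\binom{n}{(n+K+s+2)/2}$ appearing in Theorem \ref{thm.numberballotpaths}: its ratio to the leading binomial tends to $((1-\lambda)/\lambda)^{s+1}$ and precisely accounts for the difference between $\lambda^{s+1}$ and the full expression $u_s$.

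The main obstacle is this uniform binomial asymptotic: the two binomial terms are of the same exponential order, and one must track the subleading reflection term explicitly in $s$ rather than discard it. Once this asymptotic is established, the final loose end is verifying that each $\nu_\lambda$ is actually ergodic and that the assignment $\lambda \mapsto \nu_\lambda$ is injective; both follow from the strong law of large numbers applied to the height process along $M^\lambda$, which shows that the slope $s_n/n$ converges almost surely to $2\lambda - 1$, so that distinct $\lambda$ yield mutually singular measures and each $\nu_\lambda$ is indeed extreme in $\cM_c(\cP(\N))$. Choquet's theorem then delivers the desired integral decomposition $\nu_M = \int_{1/2}^{1} \nu_\lambda \, d\mu(\lambda)$.
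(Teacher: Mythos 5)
Your proposal follows the same blueprint as the paper's proof — ergodic method to pin down the transition probabilities of an ergodic central $\nu$, binomial asymptotics via the ballot-path count of Theorem \ref{thm.numberballotpaths}, and then Choquet decomposition — so in that sense it is the same approach. The difference is in the two places where the paper is terse or defers to the literature, and your treatment is the more self-contained one. For centrality of $\nu_\lambda$ the paper says only that ``it is easily checked''; your explicit telescoping using $u_s = \lambda^{s+1}-(1-\lambda)^{s+1}$, the three-term identity $u_{s+1}=u_s-\lambda(1-\lambda)u_{s-1}$, and the resulting product $(u_{s_n}/u_0)(\lambda(1-\lambda))^{(n-s_n)/2}$ depending only on the endpoint is exactly the right verification, and worth writing down. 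For ergodicity of each $\nu_\lambda$ the paper cites Wassermann's operator-algebraic argument (restriction of product states on $\bigotimes M_2(\C)$ to $\TL(\infty,\delta)$), remarking that a probabilistic variant ``adapted from Section \ref{sec.FussCat}'' is possible; your strong-law argument — $s_n/n \to 2\lambda-1$ a.s.\ under $M^\lambda$, giving mutual singularity across distinct $\lambda$, and then extremality via the Choquet decomposition — is precisely that variant and is the cleaner of the two. Two small remarks. First, your assertion that ``$K_i/N_i$ converges almost surely to a deterministic slope'' is a consequence of the ergodic method rather than a separate use of exchangeability: one must check that the asymptotic ratio of ballot-path counts is a continuous, strictly monotone function of the slope $K/N$, so that a.s.\ convergence of the dimension ratios forces a.s.\ convergence of $K_i/N_i$; this is the same uniformity issue you flag, and should be stated explicitly. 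Second, the law of large numbers you invoke for $s_n/n$ is not the iid SLLN, since the increments of $M^\lambda$ are Markov with state-dependent probabilities; it needs a short justification (e.g.\ the up-probability $u_{s+1}/u_s \to \lambda$ monotonically as $s \to \infty$ together with transience, allowing coupling with iid Bernoulli walks, or a Birkhoff-type ergodic argument), and the critical case $\lambda = 1/2$, where the drift decays like $1/(2s)$, deserves a separate sentence, since there $s_n/n \to 0$ by a Bessel-type estimate rather than a positive-drift LLN.
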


\begin{proof}
We need to show that the family $\nu_{\lambda}, \ \lambda \in [1/2,1]$ exhausts the ergodic central measures on $(\Omega_{\cP(\N)}, \cF_{\cP(\N)})$. The theorem then follows from the ergodic decomposition theorem. \\
Thus, let $\nu$ be an ergodic central measure on $(\Omega_{\cP(\N)}, \cF_{\cP(\N)})$. We invoke the ergodic method of Theorem \ref{thm.ergodicmethod} to compute the transition probabilities $p_{\nu}((m,s),(m+1,s+1))$. Set $k= (m-s)/2$, let $(N,S)$ be another vertex of $\cP(\N)$ and set $K = (N-S)/2$. By Theorem \ref{thm.numberballotpaths}, we have
\begin{align*}
\frac{\dim_{\cP(\N_0)}((m+1,s+1),(N,S))}{\dim_{\cP(\N_0)}((m,s),(N,S))} = \frac{\binom{N-m-1}{K-k} - \binom{N-m-1}{N-K-k+1}}{\binom{N-m}{K-k} - \binom{N-m}{N-K-k+1}}.
\end{align*} 
A quick computation shows that for large values of $N$ this expression is asymptotically equal to 
\begin{align*}
\frac{N-K}{K} \ \cdot \ \frac{1-\big(\frac{K}{N-K} \big)^{m-2k+2}}{1-\big(\frac{K}{N-K} \big)^{m-2k+1}}.
\end{align*}
This expression converges to the desired value
\begin{align*}
\frac{(1-\lambda)^{s+2} - \lambda^{s+2}}{(1-\lambda)^{s+1} - \lambda^{s+1}},
\end{align*}
if and only if $\tfrac{K}{N} \to \lambda \in (1/2,1]$  and to $\tfrac{1}{2} \cdot \tfrac{s+2}{s+1}$ if and only if $\tfrac{K}{N} \to \tfrac{1}{2}$.
\\
On the other hand, it is easily checked that the measures defined by the Markov chains $M_{\lambda}$ are indeed central. Their ergodicity is proven in \cite[p. 119-122]{Was81} by showing that the corresponding trace $\tau_{\lambda}$ on $\TL(\infty,\delta)$ is the restriction of the product state $\bigotimes_{n\geq 0} \Tr( \ \cdot \ \mathrm{diag}(\lambda, 1-\lambda))$  on $\bigotimes_{n\geq 0} M_2(\C)$ to $\TL(\infty,\delta)$ and are therefore extremal. We will not present the details of this argument here, but we will invoke a similar argument in the classification of central ergodic Motzkin paths in the next section. If one desires a more probabilistic argument, one can alternatively adapt the proof presented in Section \ref{sec.FussCat} to this simpler setup. 
\end{proof}




\begin{remark} \label{rem.modifiedthesame}
\begin{enumerate}
\item For the category $\cC_{S^+} = \mathrm{NC}$ of all noncrossing partitions, it has been observed in the literature that $A_{(\cC_{S^+},\delta)}(k) \cong A_{(\cC_{O^+},\sqrt{\delta})}(2k) = \mathrm{TL}(2k,\sqrt{\delta})$ through the so-called fattening isomorphism, see e.g \cite{LT16}. Since this isomorphism respects the inclusion $A_{(\cC_{S^+},\delta)}(k) \subset A_{(\cC_{S^+},\delta)}(k+1)$ to $\mathrm{TL}(2k,\sqrt{\delta}) \subset \mathrm{TL}(2k+2,\sqrt{\delta})$, it follows that, up to a change of the loop parameter, the limit algebra of $(\cC_{S^+},\delta)$ is isomorphic to the infinite Temperley-Lieb algebra, that is
\begin{align*}
A_{(\cC_{S^+},\delta)}(\infty) \cong \mathrm{TL}(\infty,\sqrt{\delta}).
\end{align*}

In particular, the trace simplex of $A_{(\cC_{S^+},\delta)}(\infty)$ is homeomorphic to that of the infinite Temperley-Lieb algebra by this identification. By Lemma \ref{lem.modifiedthesame}, the same holds true for $A_{(\cC_{S^{'+}},\delta)}(\infty)$ as it is exactly the same algebra as $A_{(\cC_{S^+},\delta)}(\infty)$.

On the level of graphs, we see that the branching graph of $A_{(\cC_{S^+},\delta)}(\infty)$ is the graph of even levels $\cP(\N_0)_{2n}$ of the semi-Pascal graph. In this graph, there is an edge from $v$ to $w$ for every two step path from $v$ to $w$ in $\cP(\N_0)$. The minimal boundary is thus homeomorphic to $[1/2,1]$ and the transition probabilities $\tilde{p}_{\lambda}(v,w)$ of the ergodic central Markov process $\tilde{M}_{\lambda}$ indexed by $\lambda$ is the probability that the process $M_{\lambda}$ defined above moves from $v \in \cP(\N_0)_{2n}$ to $w \in \cP(\N_0)_{2n+2}$ in two steps on the semi-Pascal graph.
\end{enumerate}
\end{remark}

\subsection{Traces on the infinite-dimensional Motzkin algebra} \label{sec.freebistochasic}

The next direct limit algebra that we would like to study is the one associated to the category $\cB=\cC_{B^+}$ which describes the representation theory of the free bistochastic quantum group in the setting of Banica and Speicher.  Recall from Theorem \ref{thm.freecategories} above that $\cC_{B^+}$ is formed by the noncrossing partitions with blocks of size one and two. In the next section we will compute the Bratteli diagram of $A_{(\cB,\delta)}(\infty)$. We learned after these computations were carried out that this branching graph had already been computed in \cite{BH14}, where the algebras $A_{(\cB,\delta)}(n)$ appear under the name \emph{Motzkin algebras}. We decided to nevertheless include our computations in the present article since they are a nice application of the recipe to compute the representation theory of free easy quantum group presented in \cite{FrWe16}. 

\subsubsection{The branching graph of $A_{(\cB,\delta)}(\infty)$} \label{subsubsec.freebistochbranching}

We will describe the representations of the finite-dimensional algebras $A_{(\cB,\delta)}(n), \ n\geq 1$ in a way that is similar to the standard description of Temperley-Lieb representations through link states. 

\begin{definition} \label{def.linkstate}
\begin{itemize}
\item A $n$-\emph{link state} for $\cB$ is a partition of $n$ points into pairs and two types of singletons: \emph{proper singletons} and \emph{defects}. Pictorially, a defect is labelled by an arrowhead while a proper singleton is not. A singleton that is braced by a pair is not allowed to be a defect, i.e. if $j$ is a singleton, $i<j < k$ and $i$ and $k$ are paired, then $j$ must be a proper singleton. A singleton that is not braced by a pair is allowed to be either a defect or a proper singleton.
\item A $(n,d)$-link state for $\cB$ is a $n$-link state with $d$ defects and the set of $(n,d)$-link states is denoted by $\cL_{(n,d)}$.  
\end{itemize}
\end{definition}

$n$-Link states for $\cB$ arise as upper halves of projective partitions in $\cB(n,n)$ (as defined in \cite{FrWe16}) when we cut them across a horizontal line in the middle of the diagram. A diagramatic example in which defects are marked by arrows is depicted in Figure \ref{fig.link} below.

\begin{figure}[h!]
\begin{center}
\includegraphics[scale=0.4]{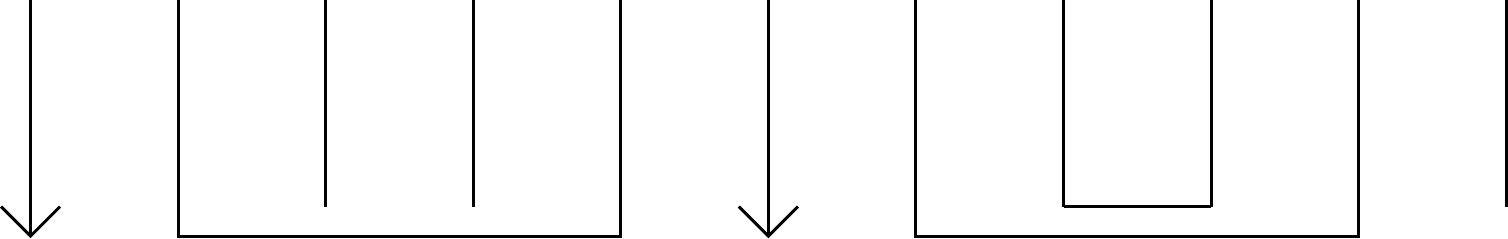}
\end{center}
\caption{\label{fig.link} A $(11,2)$-link state.}
\end{figure}

Analogous to the multiplication of partition diagrams, we can define an action $v \mapsto p \cdot v$ of a partition $p \in \cB(n,n)$ on an $n$-link state $v$ by drawing $p$ above $v$, connecting lines and deleting any lines that do not connect to points on the upper boundary. For any closed block (that is any line without defect) deleted in this way, we pay by multiplying the resulting diagram with the loop parameter $\delta$. Any line with a defect that is not connected to upper points may however be deleted right away (that is without multiplying by a scalar). Let $M_n$ denote the complex span of all $n$-link states. We can linearly extend the action of partitions on link states to a linear representation $A_{(\cB,\delta)}(n) \to \End(M_n)$, turning $M_n$ into a left $A_{(\cB,\delta)}(n)$-module.

\begin{definition} \label{def.linkmodule}
 We denote by $M_{(n,d)} \subset M_n$ the complex span of all $(n,d')$-link states with $d' \leq d$. For notational convenience, we also set $M_{(n,-1)} := \emptyset$.
\end{definition}    

It is clear that $M_{(n,d)}$ is a submodule of $M_n$. Moreover, by definition we have $M_{(n,d)} \subset M_{(n,d+1)}$ for all $d=0,\dots, n-1$. Let us denote the quotient modules by
\begin{align*}
V_{(n,0)}:= M_{(n,0)}, \qquad V_{(n,d)} := \faktor{M_{(n,d)}}{M_{(n,d-1)}}, \ d=1,\dots n.
\end{align*}

\begin{proposition}
$\{V_{(n,0)},\dots V_{(n,n)} \}$ is a full set of inequivalent irreducible modules for $A_{(\cB,\delta)}(n)$, that is to say we have a decomposition 
\begin{align*}
A_{(\cB,\delta)}(n) \cong \bigoplus_{d=0}^{n} \End(V_{(n,d)}).
\end{align*}
Moreover, the set $B_{(n,d)}=\{ v + M_{(n,d-1)}; \ v \in \cL_{(n,d)} \}$ is a basis of $V_{(n,d)}$ for $d=0,\dots,n$.
\end{proposition}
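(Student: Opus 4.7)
The plan is to prove the proposition in four steps: a basis identification, a dimension count, an irreducibility argument, and a Wedderburn assembly. The basis claim is immediate. By definition $\cL_n = \bigsqcup_{d=0}^n \cL_{(n,d)}$ is a basis of $M_n$ and $M_{(n,d)}$ is spanned by link states with at most $d$ defects, so the cosets $\{v + M_{(n,d-1)} : v \in \cL_{(n,d)}\}$ descend to a basis $B_{(n,d)}$ of $V_{(n,d)}$, and in particular $\dim V_{(n,d)} = |\cL_{(n,d)}|$.

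For the dimension count, I will exhibit a bijection
\[
\cB(n,n) \;\longleftrightarrow\; \bigsqcup_{d=0}^n \cL_{(n,d)} \times \cL_{(n,d)},
\]
which immediately gives $\dim A_{(\cB,\delta)}(n) = \sum_d |\cL_{(n,d)}|^2$. For $p \in \cB(n,n)$, slice along a horizontal midline: blocks of $p$ have size $\leq 2$, and those crossing the line are necessarily through-strings joining one upper and one lower point. Reading off both halves and marking through-string endpoints as defects yields a pair of $n$-link states with equal defect number $d =$ number of through-strings. Noncrossedness supplies the no-bracing property: a defect nested inside an upper pair would force the associated through-string to cross that pair. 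The inverse reattaches defects in left-to-right order and clearly remains noncrossing.

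Irreducibility is the core step. For $u, v \in \cL_{(n,d)}$, let $p_{u,v} \in \cB(n,n)$ be the partition whose upper half is $u$, whose lower half is the mirror $v^*$ of $v$, and whose $d$ through-strings connect defects of $v^*$ to those of $u$ in left-to-right order -- equivalently, the preimage of $(u,v)$ under the bijection above. A direct diagrammatic computation of $p_{u,v} \cdot v$ shows that when $p_{u,v}$ is stacked on $v$, every pair and every proper singleton of $v$ closes against its mirror in $v^*$ to form an interior block with no defects (contributing a factor of $\delta$), while the $d$ defects of $v$ chain through $v^*$ and the through-strings to land at the defects of $u$, and the pairs/proper singletons of $u$ survive untouched at the top. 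Hence $p_{u,v} \cdot v = \delta^{m(v)} u$ where $m(v) = \#\mathrm{pairs}(v) + \#\mathrm{singletons}(v)$. For arbitrary $x = \sum_{v} c_v (v + M_{(n,d-1)}) \in V_{(n,d)} \setminus \{0\}$, fix $v_0$ with $c_{v_0} \neq 0$ and apply $p_{v_0, v_0}$: the $v_0$-term contributes $\delta^{m(v_0)} (v_0 + M_{(n,d-1)})$, while any $v' \neq v_0$ exhibits a block-structure mismatch at the $v_0^*/v'$ interface that forces at least one through-string of $p_{v_0, v_0}$ to be absorbed into a pair or proper singleton of $v'$, strictly reducing the defect count and sending the $v'$-contribution into $M_{(n,d-1)}$. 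Combined with the transitivity $v_0 \leadsto u$ via $p_{u, v_0}$, this proves $V_{(n,d)}$ is irreducible.

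Finally, the $V_{(n,d)}$ for distinct $d$ are pairwise inequivalent because the two-sided ideal generated by partitions with fewer than $d$ through-strings annihilates $V_{(n,d)}$ yet acts nontrivially on each $V_{(n,d')}$ with $d' < d$, as exhibited by $p_{v,v}$ for $v \in \cL_{(n,d')}$. Combined with the dimension identity $\sum_d (\dim V_{(n,d)})^2 = \dim A_{(\cB,\delta)}(n)$ from step~2, semisimplicity and Wedderburn's theorem force $A_{(\cB,\delta)}(n) \cong \bigoplus_{d=0}^n \End(V_{(n,d)})$, completing the proof. The main obstacle is the step~3 diagrammatic bookkeeping -- in particular, verifying that any block-structure mismatch between $v_0$ and $v'$ necessarily absorbs a through-string, which relies on the no-bracing property together with noncrossedness to rule out alternative closures that would preserve the defect count.
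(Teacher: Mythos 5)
Your step 3 contains a false claim. You assert that for $v' \neq v_0$ with the same defect count $d$, the action of $p_{v_0,v_0}$ on $v'$ must absorb a through-string and land in $M_{(n,d-1)}$. This is not true. Take $n=4$, $d=2$, and let $v_0$ be the link state (defect at $1$, pair $\{2,3\}$, defect at $4$) and $v'$ the link state (defect at $1$, singleton at $2$, singleton at $3$, defect at $4$). Then $p_{v_0,v_0}$ has blocks $\{1,1'\}$, $\{2,3\}$, $\{2',3'\}$, $\{4,4'\}$. Stacking on $v'$: the two defects of $v'$ chain cleanly through the two through-strings to the defects of $v_0$; the singletons $2_{v'}, 3_{v'}$ close against the lower pair $\{2',3'\}$ into a closed loop; the upper pair $\{2,3\}$ survives. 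The result is $p_{v_0,v_0}\cdot v' = \delta\, v_0$, which is \emph{not} in $M_{(n,1)}$, even though $v' \neq v_0$. So your "block-structure mismatch" intuition fails: the lower half $v_0^*$ can resolve the singletons/pairs of $v'$ into a closed loop without touching any through-string.

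What is true — and what the paper uses — is that the \emph{image} of $p_{v_0,v_0}$ inside $V_{(n,d)}$ is contained in $\langle v_0 \rangle$ (the resulting link state always has the shape of the upper half $v_0$, possibly with fewer defects), and that $p_{v_0,v_0}^2 = \delta^{m(v_0)} p_{v_0,v_0}$, so that (after normalization) it is a rank-one idempotent onto $\langle v_0\rangle$. But its kernel is not the span of the other basis vectors, so you cannot conclude that $p_{v_0,v_0}\cdot x \neq 0$ merely from $c_{v_0} \neq 0$. This means that starting from an arbitrary nonzero $x \in V_{(n,d)}$ you cannot, by your argument alone, hit a basis vector; some further input is needed to rule out that $x$ sits simultaneously in the kernels of all the $p_{v,v}$. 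The paper sidesteps this by importing from Freslon--Weber the classification of minimal projections as projective partitions and their equivalence classes, which is doing more work than your diagram bookkeeping. One way to repair your proof while staying elementary would be to combine your dimension identity $\dim A_{(\cB,\delta)}(n) = \sum_d |\cL_{(n,d)}|^2$ with semisimplicity: the map $A_{(\cB,\delta)}(n) \to \bigoplus_d \End(V_{(n,d)})$ has kernel consisting of elements that strictly lower the defect filtration, hence nilpotent on $M_n$; if one knows $M_n$ is faithful, semisimplicity kills the kernel and the dimension count forces the map to be an isomorphism, giving irreducibility of every $V_{(n,d)}$ at once. As written, though, step 3 is broken.

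Your step 2 (the bijection $\cB(n,n) \leftrightarrow \bigsqcup_d \cL_{(n,d)}\times\cL_{(n,d)}$) is correct and is a nice explicit ingredient that the paper leaves implicit, and steps 1 and 4 are fine given a correct step 3.
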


\begin{proof}
The claim about the basis follows directly from the definition of our modules $V_{(n,d)}$. 
To see that the module $V_{(n,d)}$ is irreducible for any $d=0,\dots,n$, it suffices to note that the equivalence class of any $(n,d)$-link state $v$ is a cyclic vector for $V_{(n,d)}$. Indeed, given another $(n,d)$-link state $w$, we can define the partition $p(v,w)$ by drawing $w$ on top of $v$, flipping $v$ upside down and connecting the defects to obtain $d$ through-strings. Since $p(v,w) \cdot v = w$, it follows that the equivalence class of $v$ is cyclic and  $V_{(n,d)}$ is thus irreducible. To see, that $V_{(n,d)}$ and $V_{(n,c)}$ cannot be equivalent for $d < c$, it suffices to note that $p(v,v) (v+M_{(n,d)}) = (v+M_{(n,d)})$, while $p(v,v) (w+M_{(n,c)}) = 0 \in V_{(n,c)}$ as the number of defects is lowered. Thus, a $A_{(\cB,\delta)}(n)$-covariant linear  map $T: V_{(n,d)} \to V_{(n,c)}$ can never be invertible.
Finally, to show that we have found a full family of irreducible modules, we can invoke a dimension argument. To do so, simply note that the map $\cL_{(n,d)} \times \cL_{(n,d)} \to \cB(n,n)_d, \ (v,w) \mapsto p(v,w)$ is a bijection, where $\cB(n,n)_d$ denotes the set of partitions in $\cB(n,n)$ with $d$ through-strings. The inverse of this map is obtained by cutting up a partition $p$ in the middle so that the upper and lower part of the diagram form $d$-link states. Therefore, $\sum_{d=0}^n \dim(V_{(n,d)})^2 = \sum_{d=0}^n |\cL_{(n,d)} |^2 = |\cB(n,n)| = \dim(A_{(\cB,\delta)}(n))$.  
\end{proof}

To determine the branching graph of $A_{(\cB,\delta)}(\infty)$, we need to determine the decomposition of the modules $V_{(n,0)},\dots V_{(n,n-1)} $ of $A_{(\cB,\delta)}(n)$, when considered as modules of $A_{(\cB,\delta)}(n-1)$. Here, as always, we work with the natural embedding $A_{(\cB,\delta)}(n-1) \to A_{(\cB,\delta)}(n)$ that adds a through string to any partition diagram. When considered as an $A_{(\cB,\delta)}(n-1)$-module, we write $V_{(n,d)}$ as $V_{(n,d)}^{\downarrow}$.

\begin{proposition} \label{prop.decompbistochastic}
The $A_{(\cB,\delta)}(n-1)$-module $V_{(n,d)}^{\downarrow}$ decomposes as
\begin{align*}
V_{(n,d)}^{\downarrow} \cong V_{(n-1,d-1)} \oplus V_{(n-1,d)} \oplus V_{(n-1,d+1)}
\end{align*}
for $d=1,\dots,n-2$. In the remaining cases $d=0, \ d=n-1, \ d=n$, we have
\begin{align*}
V_{(n,0)}^{\downarrow} \cong V_{(n-1,0)} \oplus V_{(n-1,1)}, \quad V_{(n,n-1)}^{\downarrow} \cong V_{(n-1,n-2)} \oplus V_{(n,n-1)}, \quad V_{(n,n)}^{\downarrow} \cong V_{(n-1,n-1)}.
\end{align*}
\end{proposition}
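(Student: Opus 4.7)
The plan is to classify the basis $\cL_{(n,d)}$ of $V_{(n,d)}$ by the role of position $n$---as a defect (Case I), a proper singleton (Case II), or a pair endpoint with some $j<n$ (Case III)---and use this classification to produce the three-summand decomposition.

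For Cases I and II, I construct explicit $A_{(\cB,\delta)}(n-1)$-equivariant embeddings. The map $\iota_I : V_{(n-1,d-1)} \hookrightarrow V_{(n,d)}^{\downarrow}$ sends an $(n-1,d-1)$-link state $w$ to $w \sqcup \{n\}_{\mathrm{def}}$, and analogously $\iota_{II} : V_{(n-1,d)} \hookrightarrow V_{(n,d)}^{\downarrow}$ sends $w$ to $w \sqcup \{n\}_{\mathrm{prop}}$. Equivariance follows from the observation that the through-string appended to $p$ under the embedding $\alpha_{n-1} : A_{(\cB,\delta)}(n-1) \to A_{(\cB,\delta)}(n)$ preserves, under diagrammatic composition, the defect or proper-singleton label at position $n$. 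These two embeddings have disjoint images in $V_{(n,d)}^{\downarrow}$, yielding the first two summands.

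For Case III the naive attempt to embed $V_{(n-1,d+1)}$ by pairing the rightmost defect of $w$ with $n$ fails: the action of $\alpha_{n-1}(p)$ on such a link state can create a chain terminating in a defect or a proper singleton elsewhere, producing Case I or Case II link states. Hence the span of Case III basis vectors is not invariant. Instead, I identify the $V_{(n-1,d+1)}$-summand with the quotient $V_{(n,d)}^{\downarrow} / \bigl(\iota_I(V_{(n-1,d-1)}) + \iota_{II}(V_{(n-1,d)})\bigr)$, via the linear map sending a Case III link state with pair $(j,n)$ to the $(n-1,d+1)$-link state obtained by replacing the pair endpoint at $j$ with a defect and keeping all other blocks unchanged. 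The main technical step is verifying $A_{(\cB,\delta)}(n-1)$-equivariance of this quotient map: one tracks how $\alpha_{n-1}(p)$ acts on a Case III link state, observing that the through-string together with the pair $(j,n)$ transmits $p$'s action at position $j$ to the upper position $n$, with outputs falling into Cases I or II (and thus vanishing in the quotient) precisely when the chain terminates in a defect or in an unpaired singleton of $p$.

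Finally, semisimplicity of $A_{(\cB,\delta)}(n-1)$ splits the short exact sequence to yield the claimed direct sum decomposition. As a consistency check, the dimension identity $|\cL_{(n,d)}| = |\cL_{(n-1,d-1)}| + |\cL_{(n-1,d)}| + |\cL_{(n-1,d+1)}|$ holds, with the Case III contribution $\sum_{j=1}^{n-1} |\cL_{(j-1,d)}|\, m_{n-1-j}$ equal to $|\cL_{(n-1,d+1)}|$ via the bijection that classifies an $(n-1,d+1)$-link state by its rightmost defect $i$, a $(i-1,d)$-link state on $\{1,\dots,i-1\}$, and a Motzkin-number-many configurations of pairs and proper singletons on $\{i+1,\dots,n-1\}$. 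The boundary cases $d \in \{0, n-1, n\}$ follow the same scheme with one or two of the cases becoming vacuous. The main obstacle will be the rigorous diagrammatic verification of the Case III quotient map's equivariance, which requires careful bookkeeping of how pairs, singletons from $p$, and defects in the interval $\{j+1,\dots,n-1\}$ interact and project correctly under the quotient.
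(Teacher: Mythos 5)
Your proposal is correct and is built on the same three link-state operations as the paper's proof (append a defect at $n$, append a proper singleton at $n$, pair $n$ with the rightmost defect), but you handle the third case more carefully than the paper does --- and necessarily so. The paper writes down the basis bijection $(u,v,w)\mapsto \vec{u}+\dot{v}+\breve{w}$ and asserts that it is ``easy to check'' that this is a module isomorphism. As you correctly suspected, $\breve{\cdot}$ is not equivariant, and the span of the Case III link states is not $A_{(\cB,\delta)}(n-1)$-invariant. A minimal counterexample: take $n=3$, $d=1$. The generator $e_1$ acts as $0$ on the two-defect class in $V_{(2,2)}$, yet $\alpha_2(e_1)$ applied to $\breve{\cdot}$ of that class (the link state with a defect at position $1$ and the pair $\{2,3\}$) returns the link state with the pair $\{1,2\}$ and a defect at $3$, which is a nonzero element of $\iota_I\bigl(V_{(2,0)}\bigr)$. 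So the paper's basis map carries Case III into Case I and is not a morphism of modules. Your resolution --- keep $\iota_I$ and $\iota_{II}$ as genuine equivariant embeddings (correct, since the appended through-string fixes the type of the block at position $n$ whenever that block is a defect or a proper singleton), realize $V_{(n-1,d+1)}$ as the quotient of $V_{(n,d)}^{\downarrow}$ by $\iota_I(V_{(n-1,d-1)})\oplus\iota_{II}(V_{(n-1,d)})$, and split the resulting short exact sequence by semisimplicity of $A_{(\cB,\delta)}(n-1)$ --- is the correct way to obtain the decomposition, and in fact repairs the paper's argument rather than merely reproducing it.

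One point worth making explicit in a final write-up: the reason the inverse of $\breve{\cdot}$ lands in valid $(n-1,d+1)$-link states with $j$ as the \emph{rightmost} defect, and the reason your Motzkin-number counting bijection is correct, is the ``no braced defects'' clause of Definition~\ref{def.linkstate}. For a Case III state with pair $\{j,n\}$ this forces every defect into $\{1,\dots,j-1\}$ and forces $\{j+1,\dots,n-1\}$ to carry a defect-free noncrossing configuration, which is exactly a Motzkin-number-many set of possibilities; the same observation streamlines the equivariance bookkeeping for the quotient map, since the strand entering $p$ at the lower position $j$ is the only channel through which the defect count can change.
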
 

\begin{proof}
For a $(n-1,d)$-link state $v$, let us define $\dot{v}$ to be the $(n,d)$-link state obtained by adding a proper singleton on the right. Similarly, we define $\vec{v}$ as the $(n,d+1)$-link state obtained from $v$ by adding a defect on the right. Lastly, if $d >0$, we define $\breve{v}$ to be the $(n,d-1)$-link state that we obtain from $v$ by adding a point on the right and pairing it with the rightmost defect of $v$. For $d=1,\dots,n-2$, we define the map
\begin{align*}
 V_{(n-1,d-1)} \oplus V_{(n-1,d)} \oplus V_{(n-1,d+1)} \to V_{(n,d)}^{\downarrow}, \ (u,v,w) \mapsto \vec{u} +\dot{v} +\breve{w},
\end{align*}
where $u \in B_{(n-1,d-1)}, v \in B_{(n-1,d)}, w \in B_{(n-1,d+1)} $ and where we ignored the equivalence classes in the quotient to lighten the notation (note that this is well-defined). It is now easy to check that this map indeed extends to an isomorphism $V_{(n,d)}^{\downarrow} \cong V_{(n-1,d-1)} \oplus V_{(n-1,d)} \oplus V_{(n-1,d+1)}$ of $A_{(\cB,\delta)}(n-1)$-modules. The cases $d=0,n-1,n$ are analogous.
\end{proof}

A pictorial depiction of the first levels of the branching graph $\Gamma_{\cB^+}$ associated to $A_{(\cB,\delta)}(\infty)$ is shown in Figure \ref{fig.Motzkin}.

\begin{figure}[h!]
\begin{center}
\includegraphics[scale=0.4]{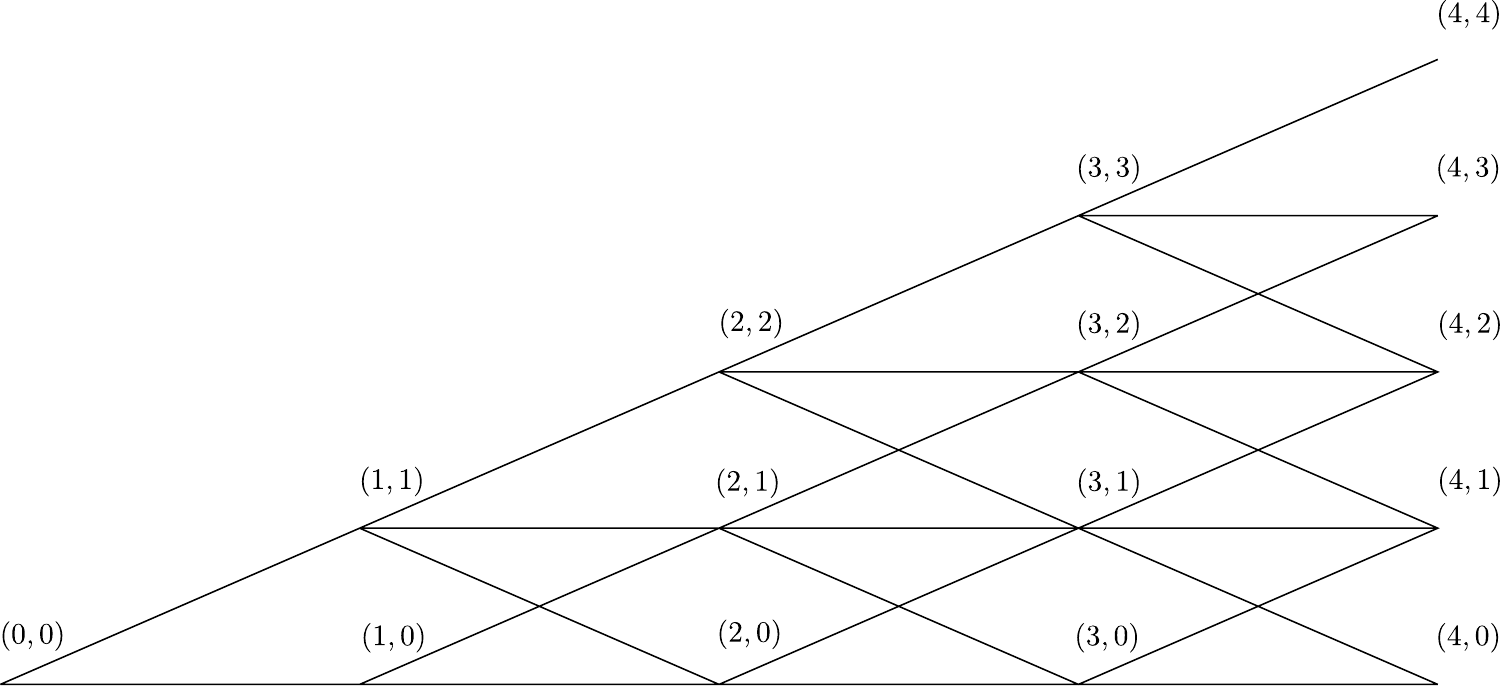}
\end{center}
\caption{\label{fig.Motzkin} The first five levels of the branching graph $\Gamma_{\cB^+}$.}
\end{figure}



The following observation is now immediate.

\begin{proposition} \label{prop.Motzkinpaths}
Let $m >n$. There is a natural bijection between paths on the branching graph $\Gamma_{\cB^+}$ starting at vertex $(n,d)$ and ending at vertex $(m,c)$ and Motzkin paths starting at $(n,d)$ and ending at $(m,c)$.
\end{proposition}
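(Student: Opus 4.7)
The plan is essentially to unpack the edge structure encoded in Proposition \ref{prop.decompbistochastic} and observe that it matches the step set of a Motzkin path exactly. Since this is a bijection claim between two combinatorial objects of the same shape, the proof is a direct identification rather than a genuine counting argument.

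First I would fix notation: a path on $\Gamma_{\cB^+}$ starting at $(n,d)$ is a sequence of vertices $(n,d) = (n_0,d_0), (n_1,d_1), (n_2,d_2), \dots$ with each consecutive pair connected by an edge, and since the graph is $\N$-graded with edges only between adjacent levels, we automatically have $n_i = n + i$. By Proposition \ref{prop.decompbistochastic}, the only edges out of a vertex $(m,e)$ go to $(m+1,e-1)$, $(m+1,e)$, or $(m+1,e+1)$, each with multiplicity one, subject to the boundary constraints $0 \leq d_i \leq n_i$ which are precisely the ones dictated by the restriction rules (no edge to a vertex with a negative second coordinate, and no edge to $(m+1, e+1)$ unless $e+1 \leq m+1$, which is automatic).

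The next step is to define the map explicitly. To a path $((n+i, d_i))_{i \geq 0}$ on $\Gamma_{\cB^+}$, associate the sequence of points $(n+i, d_i) \in \N \times \N$, viewed as a lattice path starting at $(n,d)$. Each step $(n+i,d_i) \to (n+i+1,d_{i+1})$ has $x$-component $+1$ and $y$-component $d_{i+1} - d_i \in \{-1,0,+1\}$, which is exactly a down-, level-, or up-step. The inequality $d_i \geq 0$ guarantees the path stays in $\N \times \N$, so this is a genuine Motzkin path starting at $(n,d)$. Conversely, any Motzkin path starting at $(n,d)$ gives a sequence of vertices with the correct adjacency and positivity constraints, hence a valid path on $\Gamma_{\cB^+}$. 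The two constructions are mutually inverse by inspection.

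There is no real obstacle here; the only thing to be careful about is checking that the boundary cases in Proposition \ref{prop.decompbistochastic} (namely $d=0$, $d=n-1$, $d=n$) do not introduce any edges that would force the lattice path to leave $\N \times \N$ or to take a forbidden step, and conversely that no legal Motzkin step from $(n,d)$ is missing from the branching graph. The case $d=0$ forbids a down-step (matching the restriction of a Motzkin path at height zero), and the case $d=n$ forbids an up-step (matching the fact that a vertex $(n+1, n+2)$ does not exist in $\Gamma_{\cB^+}$, but this in turn corresponds to the geometric constraint that the number of defects cannot exceed the level, which is irrelevant for Motzkin paths since the Motzkin path itself automatically respects it once one starts below the diagonal $y \leq x$ and only takes steps in $\{-1,0,+1\}$). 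Since each edge of $\Gamma_{\cB^+}$ has multiplicity one, the correspondence is bijective and not merely surjective.
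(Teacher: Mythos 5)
Your proof is correct and takes essentially the same approach as the paper's, which is a one-line observation that the edge set of $\Gamma_{\cB^+}$ (computed in Proposition \ref{prop.decompbistochastic}) produces steps $(1,1),(1,0),(1,-1)$ exactly matching the Motzkin step set; you simply spell out the boundary checks and the inverse map in more detail than the paper bothers to.
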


\begin{proof}
For any edge $(n,d) \to (n+1,d')$ in $\Gamma_{\cB}$ we have $(n+1,d') - (n,d) \in \{ (1,1),(1,0),(1,-1) \}$ corresponding to up-steps, level-steps and down-steps.
\end{proof}

As was the case for the ballot path interpretation of the branching graph of $\TL(\infty,\delta)$, Proposition \ref{prop.Motzkinpaths} yields a convenient reinterpretation
 of central measures on the space $(\Omega_{\Gamma_{\cB^+}}, \cF_{\Gamma_{\cB^+}})$ of infinite paths on $\Gamma_{\cB^+}$. A central measure $\mu$ is a \emph{random (infinite) Motzkin paths} $(w_i)_{i \geq 0}$ starting at $(0,0)$ such that the conditional distribution given $w_n = (n,d)$ is uniform on $\Mo((0,0),(n,d))$.



\begin{corollary} \label{cor.freebistochasticdimensions}

The dimension of the simple module $V_{(n,d)}$ of $A_{(\cB,\delta)}(n)$ is 
\begin{align*}
\dim V_{(n,d)} = \sum_{k=0}^{n} \binom{n}{k} \bigg( \binom{n-k}{(n-k+d)/2} - \binom{n-k}{(n-k+d+2)/2} \bigg)=: m_{n,d}.
\end{align*}
In particular the dimension of $V_{(n,0)}$ is the $n$-th Motzkin number $m_n$.
\end{corollary}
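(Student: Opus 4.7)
The proof is short and mostly a matter of assembling pieces that are already in place. The plan is to identify $\dim V_{(n,d)}$ as the number of paths from the root to the vertex $(n,d)$ in the branching graph $\Gamma_{\cB^+}$, then translate this count into Motzkin paths via Proposition \ref{prop.Motzkinpaths}, and finally invoke Theorem \ref{thm.Motzkinnumbers} with $(a,b)=(0,0)$ and $(c,d)=(n,d)$.

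First I would recall the standard fact that for a tower $\C = A_0 \subset A_1 \subset A_2 \subset \dots$ of finite-dimensional semisimple $*$-algebras with induction/restriction graph $\Gamma$, the dimension of a simple $A_n$-module labelled by $v \in V_n$ equals $\dim(v)$, the number of paths from $\emptyset$ to $v$ in $\Gamma$. This follows by a one-line induction from the decomposition of restrictions: since $V_v^{\downarrow} \cong \bigoplus_w m(w,v)\, V_w$ where $m(w,v)$ is the multiplicity of $V_w$ in $V_v^{\downarrow}$ (equivalently, the number of edges between $w$ and $v$), taking dimensions gives the recursion
\[
\dim V_v = \sum_{w} m(w,v)\, \dim V_w,
\]
which coincides with the path-counting recursion anchored at $\dim V_\emptyset = 1$.

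Next I would specialize this to our situation. Proposition \ref{prop.decompbistochastic} is precisely the statement that the edges of $\Gamma_{\cB^+}$ between $(n-1,\cdot)$ and $(n,d)$ are exactly those described there with multiplicity one, so the recursion becomes $\dim V_{(n,d)} = \dim V_{(n-1,d-1)} + \dim V_{(n-1,d)} + \dim V_{(n-1,d+1)}$ (with the obvious boundary modifications). Hence $\dim V_{(n,d)}$ is the number of rooted paths in $\Gamma_{\cB^+}$ terminating at $(n,d)$.

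Finally, Proposition \ref{prop.Motzkinpaths} identifies such rooted paths with Motzkin paths from $(0,0)$ to $(n,d)$, so $\dim V_{(n,d)} = |\Mo((0,0),(n,d))|$, and Theorem \ref{thm.Motzkinnumbers} yields the stated closed form $m_{n,d}$. The case $d=0$ then reduces to the defining sum for the Motzkin number $m_n$. There is no real obstacle here; the only thing that requires a word of care is verifying the boundary cases $d \in \{0, n-1, n\}$ of the recursion, but these are exactly the remaining assertions of Proposition \ref{prop.decompbistochastic}, so no further work is needed.
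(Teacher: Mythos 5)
Your proposal is correct and matches the paper's (implicit) argument exactly: the paper derives the corollary by identifying $\dim V_{(n,d)}$ with the number of rooted paths to $(n,d)$ in $\Gamma_{\cB^+}$, translating these into Motzkin paths via Proposition \ref{prop.Motzkinpaths}, and applying the Motzkin path count of Theorem \ref{thm.Motzkinnumbers}. Nothing to add.
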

\begin{proof}
The dimension of $V_{(n,d)}$ is equal to the number of paths in $\Gamma_{\cB}$ starting at the root and ending at $(n,d)$, see Remark \ref{rem.dimensions}. Hence, $\dim V_{(n,d)} = |\Mo((0,0),(n,d))|$ by Proposition \ref{prop.Motzkinpaths} and the result follows from the enumeration formula for Motzkin paths of Theorem \ref{thm.Motzkinnumbers}.
\end{proof}

\begin{remark}
\begin{itemize}
\item[(1)] More generally, our description of the branching graph through Motzkin paths together with Theorem \ref{thm.Motzkinnumbers} provides an exact formula for the multiplicity of $V_{(n,d)}$ in the induced representation $\Ind_{A_{(\cB,\delta)}(m)}^{A_{(\cB,\delta)}(n)} V_{(m,c)}$ for arbitrary choices of $(m,c), \ (n,d),$ as this multiplicity is equal to $|\Mo((m,c),(n,d))|$.
\item[(2)] Alternatively, one can deduce the formula for $\dim V_{(n,d)}$ from known results on easy quantum groups: the bistochastic group $B_N^+$ with fundamental representation $u_{B}$ is isomorphic as a matrix quantum group to  the free orthogonal group $O_{N-1}^+$ with representation $u_{O} \oplus 1$, where $u_O$ denotes the fundamental representation of $O_{N-1}^+$, see \cite[Theorem 4.1]{Ra12}. Since the dimension of $V_{n,d}$ is equal to the multiplicity of the irreducible representation $u_d$ in $(u_O \oplus 1)^{\ot k} = \sum_{k=0}^n \binom{n}{k} u_O^{\ot k}$, $\dim V_{(n,d)}$ can be calculated by using the well-known formulas for the multiplicities in $O_{N-1}^+$. These exactly correspond to the dimensions of the simple modules of the Temperley-Lieb algebras.  
\end{itemize}
\end{remark}

\begin{remark} \label{rem.ladder}
Although we will not make explicit use of it, the branching graph $\Gamma_{\cB} = \cP(\mathbb L)$ can also be recognized as the pascalization of the graph $\mathbb L$ depicted in Figure \ref{fig.ladder}. For obvious reasons we call this graph the \emph{ladder}.
\end{remark}

\begin{figure}[h!]
\begin{center}
\includegraphics[scale=0.4]{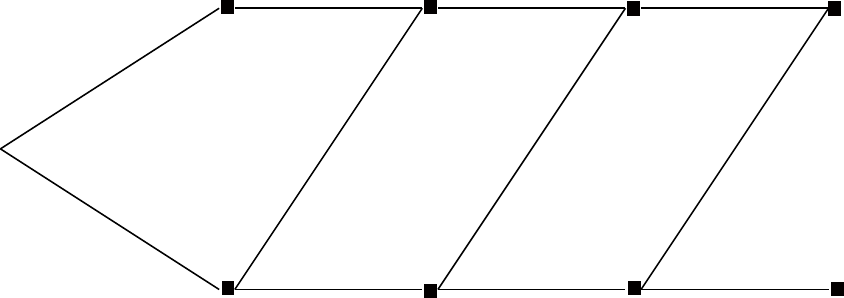}
\end{center}
\caption{\label{fig.ladder} The ladder $\mathbb L$.}
\end{figure}

\subsubsection{Traces on the Motzkin algebra in terms of random Motzkin paths} \label{subsubsec.tracesMotzkin}

We will now classify ergodic central random Motzkin paths (and thereby extremal traces on $A_{(\cB,\delta)}(\infty)$ ) in analogy to Theorem \ref{thm.randomballot}. Given an ergodic central probability measure $\nu$ let us denote the probability of arriving at vertex $(n,d)$ by 
\begin{align*}
v_{n,d} \ := \ \nu(\{ \omega \in \Omega_{\cB^+} \ ; \ \omega_n = (n,d) \}).
\end{align*}
Then, centrality implies that $\nu$ is uniquely determined by the values $v_{n,0}, \ n \geq 0$
and the recursive rules
\begin{align*}
\frac{m_{n-1,0}}{m_{n,1}} \ v_{n,1} = v_{n-1,0} \left(1- \frac{m_{n-1,0}}{m_{n,0}} v_{n,0} \right)
\end{align*}
and
\begin{align*}
\frac{m_{n-1,d}}{m_{n,d+1}} \ v_{n,d+1} = v_{n-1,d}  \left(1- m_{n-1,d} \left(\frac{v_{n,d-1}}{m_{n,d-1}} + \frac{v_{n,d}}{m_{n,d}} \right) \right)
\end{align*}
for $1 < d <n$. To see that these recursions are in fact correct, one simply notes that when dividing by $v_{n-1,d}$ one gets an expression for the probability under $\nu$ of the transition $(n-1,d) \to  (n,d+1)$.
 
Now, let $\lambda_1, \lambda_2 \in [0,1]$. By $M^{(\lambda_1, \lambda_2)} = (M^{(\lambda_1, \lambda_2)}_n)_{n \geq 0}$, we will denote the unique random Motzkin path that is Markov and whose transition probabilities are determined by
\begin{align*}
v^{(\lambda_1, \lambda_2)}_{n,0} = \bP(M^{(\lambda_1, \lambda_2)}_n = (n,0)) =  \sum_{l=0}^{\lfloor \tfrac{n}{2} \rfloor} \frac{n!}{(n-2l)!(l+1)!l!} \lambda_1^l \lambda_2^l (1-\lambda_1 - \lambda_2)^{n-2l}
\end{align*}
and the recursion above. Denote the law of $M^{(\lambda_1, \lambda_2)} $ by $\nu_{(\lambda_1, \lambda_2)}$.

Let 
\[ U:= \{ (\lambda_1,\lambda_2) \in [0,1] \times [0,1] \ ; \ \lambda_1 \geq \lambda_2, \ 0 \leq \lambda_1 + \lambda_2 \leq 1 \} \]
and note that for two distinct elements of $U$, the associated random Motzkin paths are distinct.

\begin{theorem} \label{thm.randomMotzkinclass}
A (rooted) random Motzkin path is central and ergodic if and only if it is equal to $M^{(\lambda_1, \lambda_2)}$ for some $(\lambda_1,\lambda_2) \in U $.

In particular, every central random Motzkin path $M$ with law $\nu_M$ is a mixing of the Markov chains $M^{(\lambda_1, \lambda_2)}, \ (\lambda_1,\lambda_2) \in U$, that is to say there is a probability measure $\mu$ on $U$ such that 
\[ \nu_M = \int_{U} \nu_{(\lambda_1, \lambda_2)} \ d\mu(\lambda_1,\lambda_2). \]
\end{theorem}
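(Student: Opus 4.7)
The strategy is the ergodic-method blueprint of Theorem~\ref{thm.randomballot}: first constrain any ergodic central $\nu$ to the family $\{\nu_{(\lambda_1, \lambda_2)}\}$ via asymptotics of Motzkin counts along a typical path, then verify that each $\nu_{(\lambda_1, \lambda_2)}$ with $(\lambda_1, \lambda_2) \in U$ is itself central and ergodic, and finally derive the mixing representation from Choquet's theorem.

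\emph{Step 1 (constraining the ergodic candidates).} Fix an ergodic central $\nu$ and, by Theorem~\ref{thm.ergodicmethod}(i), a $\nu$-typical path $\omega = (\omega_i)_{i \geq 0}$ with $\omega_i = (i, D_i)$, along which the ratios $|\Mo((n,d),\omega_i)|/m_{i, D_i}$ converge to $\nu(X_n = (n,d))/m_{n,d}$ for every $(n,d)$. Substituting Theorem~\ref{thm.Motzkinnumbers} and decomposing the sum over the number of level steps, a two-parameter Laplace/saddle-point expansion shows that the limit is polynomial in an unordered pair $\{\lambda_1, \lambda_2\}$ governed by two asymptotic statistics of $\omega$ along the typical path (encodable by the limiting level-step density $1-\lambda_1-\lambda_2$ together with one further symmetric function, e.g.\ identifiable from the limit of $v_{2,0}$). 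Normalising $\lambda_1 \geq \lambda_2$, the limit takes the form
\[
\lim_{i \to \infty} \frac{|\Mo((n,0),\omega_i)|}{m_{i, D_i}} \;=\; \sum_{l=0}^{\lfloor n/2 \rfloor} \frac{n!}{(n-2l)!(l+1)!\,l!}\, \lambda_1^l \lambda_2^l (1-\lambda_1-\lambda_2)^{n-2l} \;=\; v^{(\lambda_1, \lambda_2)}_{n,0},
\]
and positivity of all counts involved forces $(\lambda_1, \lambda_2) \in U$. Since the recursive rules for $v_{n,d}$ recalled in the excerpt follow from centrality alone and uniquely determine $\nu$ from the first-column sequence $(v_{n,0})_{n \geq 0}$, we conclude $\nu = \nu_{(\lambda_1, \lambda_2)}$.

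\emph{Step 2 (centrality of $\nu_{(\lambda_1, \lambda_2)}$).} For each $(\lambda_1, \lambda_2) \in U$, centrality is a direct induction on $n$: one checks that the product of the Markov transition probabilities along any path $\emptyset \to w_1 \to \dots \to w_n = (n,d)$ depends only on the endpoint and equals $v^{(\lambda_1, \lambda_2)}_{n,d}/m_{n,d}$, so the conditional law given the endpoint is uniform over $\Mo((0,0),(n,d))$.

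\emph{Step 3 (ergodicity of $\nu_{(\lambda_1, \lambda_2)}$).} Using Theorem~\ref{thm.boundarytraces} we pass to the associated trace $\tau_{(\lambda_1, \lambda_2)}$ on $A_{(\cB, \delta)}(\infty)$ and realise it as the restriction of the infinite product state
\[
\bigotimes_{n \geq 1} \Tr\bigl(\,\cdot\,\mathrm{diag}(\lambda_1,\, \lambda_2,\, 1 - \lambda_1 - \lambda_2)\bigr)
\]
on $\bigotimes_{n \geq 1} M_3(\C)$, via an embedding of the generators of the Motzkin category into the matrix product analogous to Wassermann's Temperley--Lieb construction in \cite[pp.~119--122]{Was81}; here the three diagonal entries encode up/down/level steps, and the loop parameter $\delta$ is absorbed into the trace normalisation. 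Infinite product states on matrix algebras are extremal, hence $\tau_{(\lambda_1, \lambda_2)}$ is extremal and consequently $\nu_{(\lambda_1, \lambda_2)}$ is ergodic.

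\emph{Step 4 (mixing).} The injectivity of $(\lambda_1, \lambda_2) \mapsto \nu_{(\lambda_1, \lambda_2)}$ on $U$ was noted just before the theorem; combined with Steps~1--3 this identifies $U$ with the extremal boundary of the Choquet simplex $\cM_c(\Gamma_{\cB^+})$, and Choquet's theorem yields the integral representation for any central measure $\nu_M$.

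\emph{Main obstacle.} The most delicate point is the two-dimensional asymptotic analysis in Step~1: unlike the one-parameter ballot case, two independent asymptotic statistics of a typical path drive the limits, so extracting the symmetric functions of $(\lambda_1, \lambda_2)$ from the sum in Theorem~\ref{thm.Motzkinnumbers} requires a genuine two-parameter saddle-point estimate rather than the direct binomial manipulation used in Theorem~\ref{thm.randomballot}. A close second is the explicit embedding in Step~3, where writing down generators of $\cC_{B^+}$ inside $\bigotimes M_3(\C)$ compatible with the loop parameter $\delta$ is combinatorially more intricate than in the Temperley--Lieb case.
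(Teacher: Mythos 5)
Your proposal has two genuine gaps. Step~1 is a promissory note: you assert that a ``two-parameter Laplace/saddle-point expansion'' yields $v^{(\lambda_1,\lambda_2)}_{n,0}$ as the limit, but you do not carry it out, and you yourself flag it as the most delicate point; the paper never performs this asymptotic computation either, so the gap cannot be waved away as routine. More seriously, Step~3's justification is a non-sequitur: ``Infinite product states on matrix algebras are extremal, hence $\tau_{(\lambda_1,\lambda_2)}$ is extremal'' fails because extremality of a state on the ambient algebra $\bigotimes_n M_3(\C)$ does not pass to its restriction to a subalgebra. What is actually needed is Wassermann's Theorem~4 in \cite{Was81}, which classifies the extremal traces on the fixed-point algebra of an infinite tensor-product action of a compact group as precisely the restrictions of product states with diagonal density. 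Without invoking this classification theorem, extremality of the $\tau_{(\lambda_1,\lambda_2)}$ is not established.

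The paper's route avoids both gaps. It uses the quantum-group isomorphisms $(B_3^+,u_B)\cong(O_2^+,u_O\oplus 1)$ and $O_2^+\cong(SU(2),\pi)$ to realize $A_{(\cB,3)}(\infty)$ as the fixed-point algebra of the tensor-product action $\bigotimes\Ad(\tilde{\pi})$ of $SU(2)$ on $\bigotimes M_3(\C)$, with $\tilde{\pi}=\pi\oplus 1$. Wassermann's Theorem~4 then gives the full list of extremal traces as restrictions of $\varphi_{(\lambda_1,\lambda_2)}=\bigotimes\Tr_3(\,\cdot\,\mathrm{diag}(\lambda_1,\lambda_2,1-\lambda_1-\lambda_2))$, and the identification $\tau_{(\lambda_1,\lambda_2)}(p_{(n,0)})=v^{(\lambda_1,\lambda_2)}_{n,0}$ is obtained by an explicit $SU(2)$-integral in Euler angles rather than by asymptotics along a typical path. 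The recursion you correctly point out---that centrality plus the sequence $(v_{n,0})_{n\geq 0}$ determines $\nu$---then closes the argument. In short, the paper replaces your hard Step~1 asymptotics and your intricate Step~3 embedding with a single clean conceptual identification and a direct integral computation.
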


\begin{proof}
We will adapt the methods of \cite{Was81} to our situation to classify the traces on $A_{(B,\delta)}(\infty)$. Let us first extend the standard representation $\pi: SU(2) \to M_2(\C)$ of $SU(2)$ to a representation \[\tilde{\pi}:SU(2) \to M_3(\C), \qquad \tilde{\pi}(g) = \begin{pmatrix}
 \pi(g) & 0 \\ 0 & 1 
\end{pmatrix}. \]
Note that the compact matrix quantum group $(B_N^+,u_B)$ (with standard fundamental representation $u_B$) is isomorphic to $(O_{N-1}^+,u_{O} \oplus 1_O)$ (with standard representation $u_{O}$ and trivial representation $1_O$) and that the parameter $\delta = N$ is generic for all $N \geq 3$. Hence we can just work with the choice $N=3$. Since $O_{2}^+ \cong (SU(2),\pi)$, it follows that $A_{(\cB,3)}(\infty)$ is the fixed point algebra of the infinite tensor product action
\begin{align*}
\bigotimes_{n=0}^{\infty} \Ad(\tilde{\pi})(g): \ \bigotimes_{n=0}^{\infty} M_3(\C) \to \bigotimes_{n=0}^{\infty} M_3(\C), \qquad \qquad g \in SU(2),
\end{align*}
where $\Ad$ denotes the adjoint action.
By \cite[Theorem 4]{Was81}, a trace on $A_{(B,3)}(\infty)$ is extremal if and only if it is the restriction of a product state 
\begin{align*}
\varphi_{(\lambda_1,\lambda_2)} := \bigotimes_{n=0}^{\infty}  \Tr_3(\ \cdot \ \mathrm{diag}(\lambda_1,\lambda_2,\lambda_3)),
\end{align*}
with $\lambda_1, \lambda_2 \geq 0, \ \lambda_1 + \lambda_2 \leq 1, \ \lambda_3 = 1-\lambda_1 - \lambda_2$.
We will denote the extremal trace on $A_{(B,3)}(\infty)$ arising as the restriction of $\varphi_{(\lambda_1,\lambda_2)}$ by $\tau_{(\lambda_1,\lambda_2)}$.

Since the central projection $p_{(n,0)}$ in the irreducible module $V_{(n,0)} \subset A_{(\cB,3)}(n)$ is the projection onto the space of invariant vectors of  the representation $\tilde{\pi}^{\otimes n}$ of $SU(2)$, $p_{(n,0)}$ is given by the formula
\begin{align*}
p_{(n,0)} = \int_{SU(2)} \tilde{\pi}^{\otimes n}(g)^{\ot n} \ dg,
\end{align*} 
where integration is understood w.r.t. the Haar state on $SU(2)$. Using the standard parametrisation $\begin{pmatrix} \alpha & \beta \\ -\overline{\beta} & \overline{\alpha}\end{pmatrix}, \ |\alpha|^2 + |\beta|^2 = 1$, for elements of $SU(2)$, we arrive at
\begin{align*}
\tau_{(\lambda_1,\lambda_2)}(p_{(n,0)}) &= \int_{SU(2)} \langle \mathrm{diag}(\lambda_1,\lambda_2,\lambda_3),\tilde{\pi}(g) \rangle^n \ dg \\
&= \int_{SU(2)} \Tr \Bigg(\begin{pmatrix} \lambda_1 & 0 & 0 \\ 0 & \lambda_2 & 0 \\ 0 & 0 & 1-\lambda_1 - \lambda_2 \end{pmatrix},\begin{pmatrix} \alpha & \beta & 0 \\ -\overline{\beta} & \overline{\alpha} & 0 \\ 0 & 0 & 1 \end{pmatrix} \Bigg)^n \ dg(\alpha,\beta) \\
&= \int_{SU(2)} (\lambda_1 \alpha + \lambda_2 \overline{\alpha}+ 1-\lambda_1 - \lambda_2 )^n \ dg(\alpha,\beta).
\end{align*}
To this expression, we now apply the formula
\begin{align*}
\int_{SU(2)} f(\alpha,\beta) \ dg(\alpha,\beta) = \frac{1}{16 \pi^2} \int_{-2\pi}^{2\pi} \int_{0}^{2\pi} \int_{0}^{\pi} f(\alpha,\beta) \sin(\theta) \ d\theta \ d\phi \ d\psi
\end{align*}  
for the Haar state on $SU(2)$, with the reparametrisation $\alpha = \cos (\theta/2) \exp(i (\phi+ \psi)/2), \ \beta = i \sin (\theta/2) \exp(i (\phi - \psi)/2)$, see e.g. \cite[p.159]{Vi68}. After a routine computation, this yields
\begin{align*}
\tau_{(\lambda_1,\lambda_2)}(p_{(n,0)}) &=  \sum_{l=0}^{\lfloor \tfrac{n}{2} \rfloor} \frac{n!}{(n-2l)!(l+1)!l!} \lambda_1^l \lambda_2^l (1-\lambda_1 - \lambda_2)^{n-2l}.
\end{align*}
Since the process $(M_n^{(\lambda_1,\lambda_2)})_n$ with law $\nu_{(\lambda_1,\lambda_2)}$ satisfies
\begin{align*}
\bP(M_n^{(\lambda_1,\lambda_2)} = (n,0)) = \tau_{(\lambda_1,\lambda_2)}(p_{(n,0)}),
\end{align*}
the theorem now follows by decomposing central measures into ergodic components.
\end{proof}

\begin{corollary} \label{cor.Motzkintraceclassification}
The Choquet simplex of extremal traces on the infinite Motzkin algebra $\Mo(\infty,\delta)$ at a generic loop parameter is homeomorphic to the simplex $U \subset [0,1] \times [0,1]$ with the standard topology. 

Explicitly, the homeomorphism is obtained by composing the homeomorphisms 
\[ \psi: U \to \partial \Gamma_{\mathcal{B}^+}, \quad \lambda  = (\lambda_1,\lambda_2) \mapsto \nu_{\lambda} \]
described above and the homeomorphism $\nu_{\lambda} \mapsto \tau_{\lambda}$ of Theorem \ref{thm.boundarytraces}.
\end{corollary}

\begin{proof}
Theorem \ref{thm.randomMotzkinclass} shows that $\psi$ is one-to-one. Moreover if $\lambda \to \eta \in U$, then $v_{n,0}^{\lambda}$ converges to $v_{n,0}^{\eta}$ and using the recursion formula before Theorem \ref{thm.randomMotzkinclass}, it follows that $v_{n,d}^{\lambda} \to v_{n,d}^{\eta}$ for all $n$ and $d$. Therefore, it follows from Lemma \ref{lem.weakcoherent} and the fact that $U$ and $\Gamma_{\mathcal{B}^+}$ are compact that $\psi$ is a homeomorphism. 
\end{proof}

\subsection{Traces on the infinite diagram algebra $A_{(\cB^{\#},\delta)}(\infty)$ and the infinite $2$-Fuss-Catalan algebra} \label{sec.tracesFCandrest}

In this section, our goal is to describe the traces on the infinite dimensional diagram algebras $A_{(\cB^{\#},\delta)}(\infty)$ and $\FC_2(\infty,\delta)$. The reason that these algebras are addressed together in the same section is that once the algebraic work is done and the representation theory is computed, their principal graphs will be almost identical. Therefore, the exact same arguments apply for the computation of extremal central measures on their branching graphs.  

\subsubsection{The branching graph of the algebra $A_{(\cB^{\#},\delta)}(\infty)$} \label{sec.freelymodified}

We will first turn towards the branching graph of the algebras $A_{(\cB^{\#},\delta)}(n)$ that come with the category of partitions $\cB^{\#} := \cC_{B^{\# +}}$. Recall that $A_{(\cB^{\#},\delta)}(n)$ is spanned by noncrossing set partitions with $n$ upper and $n$ lower boundary points whose blocks are of size one or two with a counterclockwise labelling of the boundary points alternating between two symbols $a$ and $b$. The labelling starts with $a$ in the upper right corner a block of size two always consists of two points which are labelled differently, see part (7) of Theorem \ref{thm.freecategories}. We call set partition of this form $ab$-\emph{admissible}.

Our goal is to describe the branching graph of the sequence of finite-dimensional algebras $A_{(\cB^{\#},\delta)}(n), \ n \geq 0$ and we will do so in Corollary \ref{cor.pascofderootedFT}. Unsurprisingly, the computations resemble those for $A_{(\cB^{+},\delta)}(n), \ n \geq 0$ in Subsection \ref{subsubsec.freebistochbranching} with the additional caveat of having to deal with the labelling. We will again describe the irreducible modules of $A_{(\cB^{\#},\delta)}(n)$ by refining the notion of link states of Definition \ref{def.linkstate} to $ab$-admissible partitions.


\begin{definition} \label{def.modifiedlinkstate}
A $n$-\emph{link state of} $\cB^{\#}$ is an $ab$-admissible set partition of $n$ points into pairs and singletons of two types called \emph{proper singletons} and \emph{defects}. A defect is not allowed to be braced by a pair, i.e. if $i<j < k$ and $i$ and $k$ are paired, then $j$ must be a proper singleton. Moreover, a defect is called $a$-defect, respectively $b$-defect, if it is labelled $a$, resp. $b$.

A $(n,k,l)$-link state of $\cB^{\#}$ is a $n$-link state with $k$ $a$-defects and $l$ $b$-defects.  
\end{definition}

\begin{figure}[h!]
\begin{center}
\includegraphics[scale=0.4]{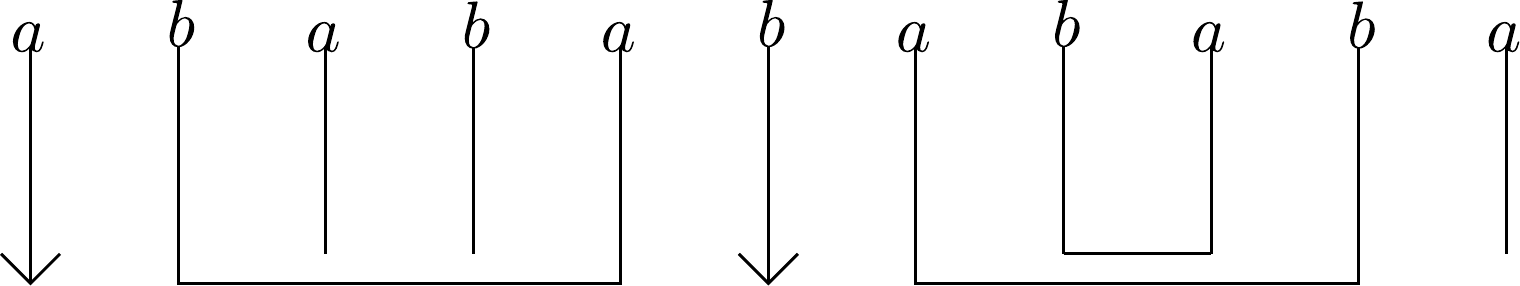}
\end{center}
\caption{\label{fig.admissiblelink} A $(11,1,1)$-link state of $\cB^{\#}$.}
\end{figure}

As in the previous cases, a partition $p$ in $A_{(\cB^{\#},\delta)}(n)$ acts on an $n$-link state $v$ by drawing $p$ above $v$ and connecting lines (without any additional restrictions by the labelling). Note that a point of $v$ labelled $a$ always meets a lower boundary point of $p$ labelled $b$ and the other way round. The fact that this operation returns an $n$-link state is easily verified in the exact same way as in the proof that composition of partitions in $\cB^{\#}$ is well-defined,  see \cite[Proof of Prop. 2.7]{We13}.

\begin{definition}
We denote the vector space freely spanned by all $n$-link states by $M_n$ and the subspace of $M_n$ spanned by all $n$-link states with at most $d$ defects by $M_{(n,d)}$ where $d=0,\dots n$. Note that $M_{(n,n)} = M_n $.
\end{definition}

Let $\alpha(n)$ be the word of length $n$ with alternating letters $a,b$ that ends in $a$. If $n=0$, we set $\alpha(0) = \emptyset$. 
\begin{definition}
We denote by $W_{\alpha(n)}$ the set of subwords of $\alpha(n)$, that is the set of words that are obtained from $\alpha(n)$ by deleting letters. The inverted word $\bar{w}$ of $w$ is the word arising from $w$ by applying the map $ a \mapsto b, \ b\mapsto a$ to every letter of $w$. In particular, if $w \in W_{\alpha(n)}$, then $\bar{w} \in W_{\overline{\alpha(n)}}$.
\end{definition}

\begin{definition} \label{def.wordoflink}
Let $v$ be an $n$-link state. The \emph{boundary word} $w(v)$ is the subword of $\alpha(n)$ that labels the defects of $v$ from left to right. 
\end{definition}

\begin{lemma} \label{lem.words}
Let $v_1$ and $v_2$ be $n$-link states with the same number of defects $d$. There exists a partition $p$ in $\cB^{\#}(n,n)$ such that $p \cdot v_1 = v_2 $ if and only if $w(v_1) = w(v_2)$.
\end{lemma}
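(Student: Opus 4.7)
I will build the proof on a structural observation about $\cB^{\#}$-link states, which I call the \emph{alternation lemma}: for any $n$-link state $v$ of $\cB^{\#}$, the boundary word $w(v)$ is an alternating word in the letters $a, b$. To justify this, between two consecutive defects of $v$ all intervening columns are covered by outermost pairs of $v$ (since by definition defects are not braced by any pair). By $ab$-admissibility each such pair connects columns of opposite labels, and hence of opposite parity, so each outermost pair spans an even number of columns (its two endpoints included). Summing over all such outermost pairs shows that the number of columns between two consecutive defects is even, which forces their positions to have opposite parity and thus opposite labels.

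For the ``only if'' direction, suppose $p \cdot v_1 = v_2$. The alternation lemma reduces the equality $w(v_1) = w(v_2)$ to equality of the \emph{initial} letters of the two boundary words, i.e., to showing that the leftmost defects of $v_1$ and $v_2$ carry the same label. I would trace the chain in the combined diagram of $p \cdot v_1$ that contains the leftmost defect of $v_2$, sitting at some upper column $j_1$ of $p$. The noncrossing property of $p$ and $v_1$, together with the fact that both link states have the same number of defects, forces this chain to terminate at the leftmost defect of $v_1$, at some lower column $i_1$. Tracking the label along the chain, each pair-block flips the $\cB^{\#}$-label by $ab$-admissibility; combining this with the fact that the upper and lower rows of a $\cB^{\#}$-partition carry opposite labels at the same column (a direct consequence of the counterclockwise convention), one concludes that $v_2$'s leftmost defect and $v_1$'s leftmost defect carry the same label.

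For the ``if'' direction, I would construct $p$ explicitly. Let the defects of $v_1$ be at positions $i_1 < \cdots < i_d$ and those of $v_2$ at $j_1 < \cdots < j_d$. Define $p$ by placing the non-defect blocks of $v_2$ on the upper row of $p$ (pairs of $v_2$ become upper pairs of $p$, proper singletons become upper singletons), placing the non-defect blocks of $v_1$ on the lower row of $p$, and adding through-strings connecting $l_{i_k}$ to $u_{j_k}$ for $k = 1, \ldots, d$. Noncrossing follows from the fact that defects are unbraced (so each through-string has both endpoints outside every pair of its row) and from the monotonicity $i_k \mapsto j_k$; $ab$-admissibility of each through-string uses $w(v_1) = w(v_2)$ together with the aforementioned opposite labelling of upper and lower rows.

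The main obstacle I anticipate is verifying that $p \cdot v_1$ equals $v_2$ \emph{on the nose} rather than merely up to a power of $\delta$: the naive construction produces a closed loop whenever a pair of $v_1$ is directly mirrored by a lower pair of $p$. I would remedy this by refining the construction so that each $v_1$-pair is ``absorbed'' into a defect chain via a detour through the lower row of $p$ --- guaranteeing that every block of the combined diagram either reaches the upper row or contains a defect of $v_1$, and thereby eliminating all closed loops.
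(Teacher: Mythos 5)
The central gap is your \emph{alternation lemma}, which is false. In your argument that the region between two consecutive defects is tiled by outermost pairs you have forgotten that link states of $\cB^{\#}$ may contain \emph{proper singletons}, and nothing in Definition~\ref{def.modifiedlinkstate} prevents a proper singleton from sitting between two defects without being covered by any pair. A concrete counterexample with $n=3$: take the link state whose points $1,2,3$ (labelled $a,b,a$) are respectively a defect, a proper singleton, and a defect. No defect is braced, $ab$-admissibility is vacuous since there are no pairs, and the boundary word is $aa$, which is not alternating. This is not a pathology: the paper's own Theorem~\ref{thm.modifiedmodules} indexes the irreducibles by arbitrary subwords $w\in W_{\alpha(n)}$ of the alternating word $\alpha(n)$, and such subwords (e.g.\ $aa\in W_{\alpha(3)}$) are generally not alternating. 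Since your whole ``only if'' direction funnels through ``alternation reduces the claim to the first letters,'' that direction does not work as written. The fix is to run your label-tracking argument at \emph{every} defect rather than only the leftmost: by noncrossingness and the matching defect count, the block of the combined diagram containing the $k$-th defect of $v_2$ contains the $k$-th defect of $v_1$, and the label bookkeeping (opposite labels across the $p$/$v_1$ interface, opposite labels along a through-string of $p$) then gives $w(v_1)=w(v_2)$ letter by letter. That is precisely the paper's argument; there is no shortcut via a single letter.

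On the ``if'' direction: your explicit construction (non-defect blocks of $v_2$ on the upper row of $p$, non-defect blocks of $v_1$ on the lower row, order-preserving through-strings joining the defects) is essentially the paper's construction, and the verification of $ab$-admissibility via $w(v_1)=w(v_2)$ plus the inverted labelling of the lower row is the right one. Two remarks. First, you do not check $ab$-admissibility of the \emph{pair} blocks placed on the lower row; this is automatic (label inversion sends a pair of opposite labels to a pair of opposite labels), but should be said. Second, your worry about $p\cdot v_1$ producing loops is a real observation about the construction (mirroring every pair of $v_1$ on the lower row of $p$ does create closed loops when stacking), but your proposed ``detour through the lower row'' remedy is not a construction --- as stated it is unclear how to route such a detour while staying noncrossing, $ab$-admissible, and within $\cB^{\#}(n,n)$ (blocks of size at most two). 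Note that the paper's own construction has the same feature and does not address it; the statement is being used only up to the nonzero scalar $\delta^{\#\mathrm{loops}}$, which is all that the downstream irreducibility argument in Theorem~\ref{thm.modifiedmodules} needs. I would either accept the scalar, or simply observe that the resulting $p\cdot v_1=\delta^{k}v_2$ with $\delta^{k}\neq 0$ already suffices, rather than attempt a loop-free variant.
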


\begin{proof}
Suppose first that a partition as in the assertion exists. If $m$ is a defect of $v_2$ then there must be a through-string of $p$ connecting of $p$ whose lower boundary point is a defect for $v_1$. Since the number of defects is the same for $v_1$ and $v_2$, all defects of $v_1$ are met. Moreover, since $p$ is noncrossing, the $k$-th defect of $v_2$ has to connect to the $k$-th defect of $v_1$. Since the word labelling lower boundary points of $p$ that meet defects of $v_1$ is $\overline{w(v_1)}$ and since through strings always connect opposite labels, it follows that $w(v_2) = \overline{\overline{w(v_1)}} = w(v_1)$. 
  
Conversely, if $v_1$ and $v_2$ are such that $w(v_1) = w(v_2)$ we can define the partition $p$ by placing $v_2$ above $v_1$, flipping $v_1$ horizontally and inverting the boundary labelling of $v_1$. We then connect the $k$-th defect of $v_1$ to the $k$-th defect of $v_2$ to not produce any crossing. Since $w(v_1) = w(v_2)$ and since through-strings connect boundary points of opposite labels (as we inverted $w(v_1)$), $p$ is well-defined.
\end{proof}

In analogy to Subsection \ref{subsubsec.freebistochbranching} we make $M_n$ into a left $A_{(\cB^{\#},\delta)}(n)$-module by extending the action of the base partitions of $A_{(\cB^{\#},\delta)}(n)$ on $n$-link states linearly to all of $A_{(\cB^{\#},\delta)}(n)$ and $M_n$. Since the action of base partitions of $A_{(\cB^{\#},\delta)}(n)$ can never add a defect to a link state, the vector spaces $M_{(n,d)}, \ d=0,\dots n$ also inherit a left $A_{(\cB^{\#},\delta)}(n)$-module structure. \\

We write  $W^{(k,l)}_{\alpha(n)} \subset  W_{\alpha(n)}$ for the set of subwords of $\alpha(n)$ consisting of $k$ letters $a$ and $l$ letters $b$ where $k=0,\dots \lceil \tfrac{n}{2} \rceil, \ l=0,\dots \lfloor  \tfrac{n}{2} \rfloor$.

\begin{theorem} \label{thm.modifiedmodules}
\begin{itemize}
\item[(1)] The module $V_{(n,0,0)}:= M_{(n,0)}$ is irreducible.
\item[(2)]The quotient module $V_{(n,d)} := \faktor{M_{(n,d)}}{M_{(n,d-1)}}, \ d=1,\dots n$
decomposes as a direct sum
\begin{align*}
V_{(n,d)} = \bigoplus_{k,l: \ k+l=d} V_{(n,k,l)}
\end{align*}
of submodules $V_{(n,k,l)}, \ k=0,\dots \lceil \tfrac{n}{2} \rceil, \ l=0,\dots \lfloor  \tfrac{n}{2} \rfloor, \ k+l=d$. The submodule $V_{(n,k,l)}$ is spanned by the equivalence classes of $(n,k,l)$-link states in $V_{(n,d)}$.
\item[(3)] The  submodule $V_{(n,k,l)}$ decomposes further into inequivalent irreducible submodules
\begin{align*}
V_{(n,k,l)} = \bigoplus_{w \in  W^{(k,l)}_{\alpha(n)}} V_{w},
\end{align*}
where $V_w$ is spanned by the equivalence classes of $(n,k,l)$-link states $v$ with $w(v)=w$.
\item[(4)] The modules $V_w, \ w \in W_{\alpha(n)}$ form a full family of inequivalent irreducible modules for $A_{(\cB^{\#},\delta)}(n)$.
\end{itemize}
\end{theorem}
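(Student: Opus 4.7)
The plan is to mirror the projective-partition argument used above for $A_{(\cB,\delta)}(n)$, with the extra wrinkle that the alternating $ab$-labelling of $\cB^{\#}$ forces a finer splitting of each defect-count module indexed by boundary words. I proceed in four steps corresponding to parts (1)--(4) of the theorem.

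First, the decomposition (2) follows from the structural constraint that every pair in a $\cB^{\#}$-partition connects an $a$-point to a $b$-point. Hence, when a base partition $p \in \cB^{\#}(n,n)$ acts on an $(n,k,l)$-link state $v$, any strand of $p$ absorbing defects of $v$ must pair one $a$-defect with one $b$-defect. So $p \cdot v$ is either (up to a power of $\delta$) another $(n,k,l)$-link state, or else lies in $M_{(n,d-1)}$. Passing to the quotient yields the direct-sum decomposition $V_{(n,d)} = \bigoplus_{k+l=d} V_{(n,k,l)}$ of $A_{(\cB^{\#},\delta)}(n)$-modules.

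Next, for the finer decomposition (3), Lemma \ref{lem.words} takes centre stage: whenever $p \cdot v$ is a non-zero multiple (modulo $M_{(n,d-1)}$) of an $(n,k,l)$-link state $v'$, we must have $w(v') = w(v)$. Thus the subspace $V_w \subset V_{(n,k,l)}$ spanned by classes of link states with boundary word $w$ is an $A_{(\cB^{\#},\delta)}(n)$-submodule, and $V_{(n,k,l)} = \bigoplus_{w \in W^{(k,l)}_{\alpha(n)}} V_w$. To establish irreducibility of $V_w$ (and, as the case $d=0$, of $V_{(n,0,0)}$ in (1)), I would associate to each link state $v$ the projective partition $p_v$ obtained by reflecting $v$ horizontally and joining matching defects by through-strings. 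A direct diagrammatic check gives $p_v \cdot v = \delta^{q_v} \bar{v}$ for $q_v$ the number of pairs of $v$. For any other basis vector $v'$ of $V_{(n,k,l)}$, the result $p_v \cdot v'$ is, modulo $M_{(n,d-1)}$, a scalar multiple of $\bar{v}$; if that scalar were non-zero, Lemma \ref{lem.words} would force $w(v') = w(v)$. So $p_v$ acts as a rank-one projection onto $\C\bar{v} \subset V_w$ and annihilates every $V_{w'}$ with $w' \neq w$. Hence, for any non-zero $x \in V_w$, one can extract a non-zero multiple of some $\bar{v}$, and then Lemma \ref{lem.words} supplies partitions sending $\bar{v}$ to each other basis class of $V_w$; the cyclic submodule generated by $x$ is therefore all of $V_w$.

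Finally, for (4), the projective partitions $p_v$ and $p_{v'}$ are equivalent minimal idempotents in $A_{(\cB^{\#},\delta)}(n)$ if and only if related by a ``matching'' partition, which by Lemma \ref{lem.words} happens exactly when $w(v) = w(v')$; this yields the inequivalence of $V_w$ and $V_{w'}$ for $w \neq w'$. Completeness then follows from the standard correspondence between equivalence classes of minimal idempotents of a semisimple algebra and its Wedderburn summands, recovering $A_{(\cB^{\#},\delta)}(n) \cong \bigoplus_{w \in W_{\alpha(n)}} \End(V_w)$. The main obstacle I expect is the diagrammatic verification in the irreducibility step that $p_v$ annihilates the $V_{w'}$ with $w' \neq w$ at the same $(k,l)$. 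In the unlabelled setting of $A_{(\cB,\delta)}(n)$ only defect counts mattered, whereas here one must extract the full labelled through-string information; Lemma \ref{lem.words} is precisely engineered to do so, but applying it correctly requires careful control of the geometry of the composition $p_v \cdot v'$.
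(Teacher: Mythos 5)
Your proof follows essentially the same route as the paper: use the $ab$-labelling constraint to split $V_{(n,d)}$ by defect type, invoke Lemma~\ref{lem.words} to classify orbits of link states by boundary word and hence obtain the irreducible pieces $V_w$, and finally use projective partitions to get inequivalence. Two points are worth flagging.

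First, in step (2) you argue only that a cup of $p$ absorbing two defects must pair an $a$-defect with a $b$-defect, which handles the case where the image drops into $M_{(n,d-1)}$, but you do not address the complementary case: when no defect is absorbed, each defect of $v$ passes through a through-string of $p$, and one must check that the through-string delivers it to an upper boundary point with the \emph{same} label (this holds because the inclusion into $p$'s lower row inverts the labelling, and blocks of $p$ connect opposite labels, so the two inversions cancel). The paper makes this the crux of the invariance argument; without it, (2) is unproven.

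Second, for completeness in (4), appealing to ``the standard correspondence between equivalence classes of minimal idempotents and Wedderburn summands'' is not enough: you have produced a family of pairwise inequivalent simples, but nothing in your argument rules out further Wedderburn summands not touched by any projective partition $p_v$. The paper closes this gap by citing \cite[Theorem 5.5]{FrWe16}, which asserts that \emph{every} minimal projection in $A_{(\cB^{\#},\delta)}(n)$ is (up to scalar) a projective partition; from this, every simple module is of the form $A_{(\cB^{\#},\delta)}(n)\cdot p_v$ and hence is one of the $V_w$. You hint at ``mirroring the projective-partition argument,'' which suggests you have this result in mind, but as written the completeness step is a genuine gap. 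With those two points made explicit, your proof coincides with the paper's.
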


\begin{proof}
We note first that, for $d > 0$ ,the equivalence classes $v +  M_{(n,d-1)}, \ v \in \mathcal{L}_{n,d}$, where $\mathcal{L}_{n,d}$ is the set of $n$-link states with $d$ defects, form a basis of $V_{(n,d)}$. For $d=0$, the same statement is true with the equivalence classes may be dropped since we are not in a quotient. We show first that $V_{(n,k,l)}$ is in fact invariant under the action of $A_{(\cB^{\#},\delta)}(n)$ on $V_{(n,d)}$ and thus a submodule for $k+l=d$. Let $p \in \cB^{\#}(n,n)$ a partition and $v$ a $(n,k,l)$-link state. Since $p$ has $2n$ boundary points, the rightmost lower point is labelled $b$. Thus, when $p$ acts on $v$, an $a$-labelled boundary point of $v$ meets a $b$-labelled lower point of $p$. There are now two possibilities: either a defect of $v$ is erased by the action of $p$ in which case $p \cdot v$ is annihilated by the quotient or all defects are kept, which means that they must connect to an upper point of $p$. By the observation we just made on the matching of boundary points, the upper point to which a defect connects, carries the same label as the defect. Therefore, $p \cdot v$ is also a $(n,k,l)$-link state and $V_{(n,k,l)}$ is invariant under the action of $A_{(\cB^{\#},\delta)}(n)$. By counting dimensions, it follows that $V_{(n,d)}$ decomposes in the stated manner so that (2) holds. The refined decomposition of $V_{(n,k,l)}$ stated in (3), then follows from Lemma \ref{lem.words}, as the orbits of $(n,k,l)$-link states under the action of $A_{(\cB^{\#},\delta)}(n)$ are fully classified by the words in $W^{(k,l)}_{\alpha(n)}$. This also shows that the module $V_w$ is irreducible for arbitrary $w \in W_{\alpha(n)}$. In particular, this proves (1) since $V_{n,0,0} = V_{\emptyset}$.
 
To finish the proof of (3) and (4), we still have to show that the modules $V_w, \ w \in W_{\alpha(n)}$ are pairwise inequivalent. To see this, we invoke \cite[Theorem 5.5]{FrWe16} which tells us that every minimal projection in $A_{(\cB^{\#},\delta)}(n)$ is a projective partition. The minimal projections that act nontrivially on the module $V_w$ are the projections onto $\langle v \rangle$ for $n$-link states $v$ with $w(v) = w$. By Lemma \ref{lem.words} two minimal projective partitions mapping onto $\langle v_1 \rangle, \langle v_2 \rangle$ respectively, cannot be equivalent if $w(v_1) \neq w(v_2) $.  
\end{proof}

As always, we define the inclusion $A_{(\cB^{\#},\delta)}(n-1) \hookrightarrow A_{(\cB^{\#},\delta)}(n) $ by adding a through string on the right of every base partition (and extending this mapping linearly). Under this embedding the labelling of a boundary point is inverted: a boundary point that was labelled $a$ is now labelled $b$ and the other way round.

For a word $w \in W_{\alpha(n)}$, we write $|w|$ for its length, $ll(w)$ for its last letter and $w\backslash ll(w)$ for the word obtained from $w$ by deleting its last letter.

\begin{proposition} \label{prop.decompositionmodifiedmodules}
As an $A_{(\cB^{\#},\delta)}(n-1)$-module, $V_w, \ w \in W_{\alpha(n)}$ decomposes as the direct sum of irreducible modules
\begin{align*}
V_w^{\downarrow} = \delta_{ll(w)=a} V_{\overline{w \backslash ll(w)}} \oplus \delta_{|w|<n-1} V_{\overline{w}b} \oplus \delta_{|w|<n} V_{\bar{w}},
\end{align*}
where $\delta_{ll(w)=a}$ (resp. $\delta_{|w|<m})$ is the characteristic functions of the set $\{ w \in W_{\alpha(n)} \ ; \ ll(w)= a\}$ (resp. $\{ w \in W_{\alpha(n)} \ ; \ |w|< m\})$.
\end{proposition}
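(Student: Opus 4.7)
My plan is to follow the template of the proof of Proposition \ref{prop.decompbistochastic}, with additional care for the $ab$-labelling. The key observation is that the embedding $A_{(\cB^{\#},\delta)}(n-1) \hookrightarrow A_{(\cB^{\#},\delta)}(n)$ via a through-string on the right inverts labels at positions $1,\ldots,n-1$: one has $\alpha(n-1)_k = \overline{\alpha(n)_k}$ at every such position. The three summands in the proposition are expected to correspond to the three possible structural types at position $n$ (which carries label $a$) of a basis element of $V_w$, namely a defect, a proper singleton, or an endpoint of a pair.

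First, I would define three extension maps taking $(n-1)$-link states to $n$-link states lying in $V_w$. Set $\vec u$ to be $u$ with an extra $a$-defect adjoined at position $n$, defined for $u$ in the basis of $V_{\overline{w \backslash ll(w)}}$ (which requires $ll(w)=a$); set $\dot v$ to be $v$ with a proper singleton appended at position $n$, defined for $v$ in the basis of $V_{\bar w}$; and set $\breve{u'}$ to be $u'$ with a new point adjoined at position $n$ paired with the rightmost defect of $u'$, defined for $u'$ in the basis of the middle summand. A direct check using the label-inversion formula confirms that the boundary word of each extended link state, read in $\alpha(n)$, equals $w$, and that all resulting new pairings are $\cB^{\#}$-admissible (a $b$-labelled point in $\alpha(n)$ being paired with the $a$-labelled point at $n$). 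The combined map $(u,v,u') \mapsto \vec u + \dot v + \breve{u'}$ is then a vector space isomorphism, because every basis element of $V_w$ falls into exactly one of the three cases according to the structural type of its rightmost point.

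The main step, and the main obstacle, is to verify that this combined map intertwines the $A_{(\cB^{\#},\delta)}(n-1)$-action. Since the embedding appends a through-string at position $n$, this string preserves the structural type of the rightmost boundary point under the action, making the three spans candidate submodules; one then checks relations of the form $p' \cdot \vec u = \vec(p \cdot u)$ and their analogues for $\dot$ and $\breve$, working modulo the submodule of link states with strictly fewer defects (so that terms in which $p$ destroys defects vanish in the appropriate quotient). The subtlest case is $\breve$: when $p$'s partition structure transports the rightmost defect of $u'$ from position $j$ to a new position $k$, the pair $\{j,n\}$ of $\breve{u'}$ correspondingly becomes the pair $\{k,n\}$ in $p' \cdot \breve{u'}$, and the label-matching constraints of $\cB^{\#}$-admissibility must be propagated carefully through the combined diagram. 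The boundary cases encoded by the characteristic functions are straightforward: $ll(w)=b$ excludes Case A (since position $n$ has label $a$), $|w|=n-1$ excludes Case C (no free position remains to form a pair at $n$), and $|w|=n$ excludes Cases B and C (every position is forced to be a defect).
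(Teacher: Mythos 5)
Your blueprint is the right one: the paper's own proof is a one-liner that simply defers to the three extension maps $\vec{\cdot},\dot{\cdot},\breve{\cdot}$ of Proposition~\ref{prop.decompbistochastic}, and you have correctly identified the three structural cases at position $n$ and the corresponding maps. However, the admissibility check you assert for the $\breve{\cdot}$ case actually fails, and this is worth flagging because it points at a typo in the statement itself.

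If $u'$ has boundary word $\overline{w}b$ read in $\alpha(n-1)$, then its rightmost defect sits at a position $j$ whose $\alpha(n-1)$-label is $b$, hence whose $\alpha(n)$-label is $\overline{b}=a$ after the inversion you yourself invoke. Pairing $j$ with position $n$ therefore produces an $a$--$a$ block in $\alpha(n)$, which is \emph{not} $\cB^{\#}$-admissible --- contrary to your parenthetical ``a $b$-labelled point in $\alpha(n)$ being paired with the $a$-labelled point at $n$.'' The construction only closes up if the middle summand is $V_{\overline{w}a}$ rather than $V_{\overline{w}b}$: then the rightmost defect of $u'$ has $\alpha(n-1)$-label $a$, hence $\alpha(n)$-label $b$, and the new pair $\{j,n\}$ is admissible, while the remaining defects recover exactly the word $w$ in $\alpha(n)$. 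The text immediately after the proposition corroborates this: translating $V_{\overline{w}a}$ (not $V_{\overline{w}b}$) into $\beta$-words reproduces exactly the stated rules ``$u$ is connected to $w$ iff $w=u$, $w=ub$, or $u=wa$'' (odd to even level) and its companion, whereas $V_{\overline{w}b}$ would give $\hat u=\hat w b$ in place of $\hat u=\hat w a$. A small sanity check makes the discrepancy concrete: for $n=2$ and $w=\emptyset$, the formula as written produces a summand $V_b$ on level $1$, which does not exist ($\alpha(1)=a$), so $\dim V_\emptyset$ at level $2$ would be $1$; but directly one counts two $2$-link states with empty boundary word (two proper singletons, or one pair), and $V_{\overline{w}a}=V_a$ gives the correct $\dim V_\emptyset=1+1=2$. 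So the gap in your argument is precisely that the claimed label-matching does not go through as stated; once you replace the middle summand by $V_{\overline{w}a}$, your extension maps and the intertwining sketch are fine.
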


\begin{proof}
The isomorphism 
\[\delta_{ll(w)=a} V_{\overline{w \backslash ll(w)}} \oplus \delta_{|w|<n-1} V_{\overline{w}b} \oplus \delta_{|w|<n} V_{\bar{w}} \to V_w^{\downarrow} \]
is defined in the same way as in Proposition \ref{prop.decompbistochastic}. We leave out the details of the proof.
\end{proof}

We are now ready to describe the branching graph $\Gamma_{\cB^{\#}}$ of $A_{(\cB^{\#},\delta)}(\infty)$. Since the inversion of words that appears in Proposition \ref{prop.decompositionmodifiedmodules} is somewhat inconvenient, we will do the following. Define the word
\begin{align*}
\beta(n) = \begin{cases} \alpha(n) \quad n \text{ odd} \\ \overline{\alpha(n)} \quad n \text{ even} \end{cases}.
\end{align*}
In other words, $\beta(n)$ is simply the alternating word $abab\dots$ with $n$ letters starting with $a$. We also identify $W_{\alpha(n)}$ with $W_{\beta(n)}$ through the bijection $w \mapsto \overline{w}$ for even $n$. Under this identification, $u \in W_{\beta(2n-1)}$ is connected to $w \in W_{\beta(2n)}$ for $n \geq 1$ if and only if 
\begin{align*}
w = u, \qquad \text{or} \qquad w=ub \qquad \text{or} \qquad u=wa.
\end{align*}
$u \in W_{\beta(2n)}$ is connected to $w \in W_{\beta(2n+1)}$ for $n\geq 0$ if and only if 
\begin{align*}
w = u, \qquad \text{or} \qquad w=ua \qquad \text{or} \qquad u=wb.
\end{align*}
Note that after our change of notation from $\alpha(n)$ to $\beta(n)$, the first connection rule $w=u$ exactly corresponds to the last summand in the decomposition of Proposition \ref{prop.decompositionmodifiedmodules}. The second connection rule $w=ub$, respectively $w=ua$, corresponds to the middle summand in this decomposition and the last connection rule $u=wa$, respectively $u=wb$ corresponds to the first summand.
The first few levels of $\Gamma_{\cB^{\#}}$ are depicted in Figure \ref{fig.modifiedbranching} below.

\begin{figure}[h!]
\begin{center}
\includegraphics[scale=0.3]{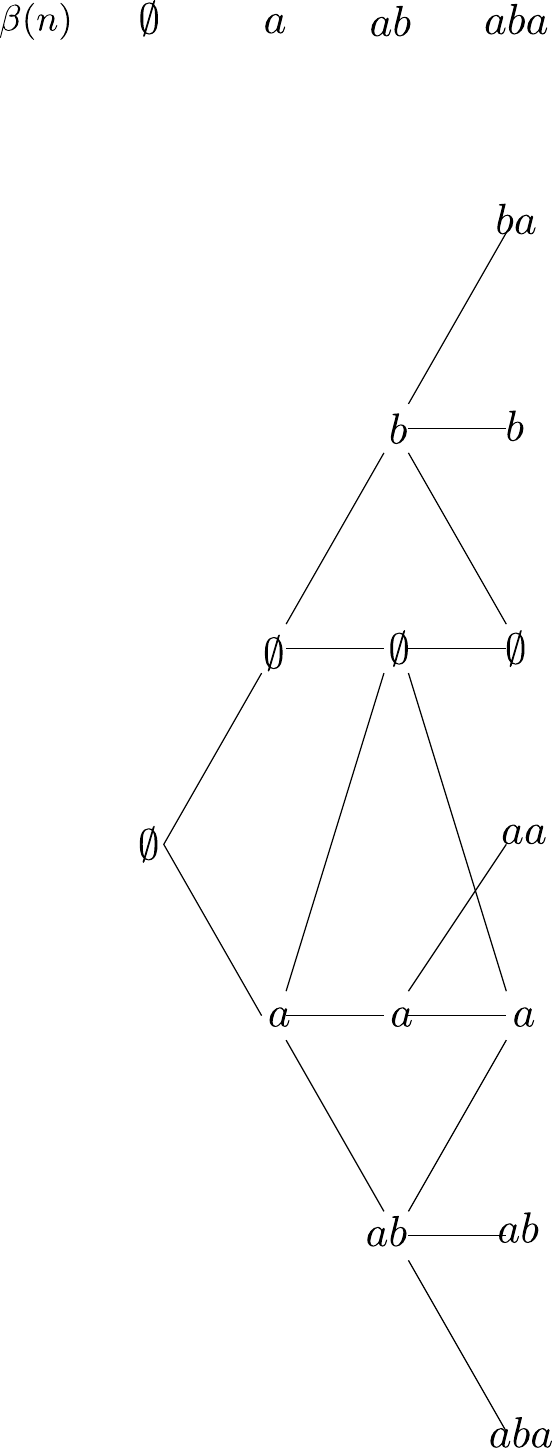}
\end{center}
\caption{\label{fig.modifiedbranching} The first four levels of $\Gamma_{\cB^{\#}}$ and the words $\beta(n)$.}
\end{figure}

Again, the graph $\Gamma_{\cB^{\#}}$ can be thought of the pascalization of a smaller graph which is the \emph{derooted Fibonacci tree} $\FT^*$. Let us therefore remind the reader that the (non-derooted) Fibonacci tree $\FT$ is given as follows. On the $0$-th level, we start with the root $\emptyset$ ('a newborn pair of rabbits'). The rabbits now age one month creating a vertex on level one and are now able to procreate so that they have two descendants on level two corresponding to themselves and a new pair. This process continues inductively with each new pair only being able to procreate after having lived for one month. More formally, let us describe the vertices of $\FT$ through words in letters $a$ and $b$.  The $n$-th level set $\FT_n, \ n \geq 1$ contains words of length $n$ starting with $a$ such that two copies of the letter $b$ never appear next to each other. A word $w_1 \in \FT_n$ is connected to a word in $w_2 \in \FT_{n+1}$ if $w_2=w_1 b$ or $w_2=w_1 a$. The derooted Fibonacci tree $\FT^*$ is just $\FT$ with the root deleted, i.e. $\FT^*_n = \FT_{n+1}, \ n \geq 0$. Note that any word in $\FT^*$ begins with the letter $a$ and in particular, the root of $\FT^*$ is  labelled by the single letter word $a$.
The first few levels of $\FT$ and $\FT^*$ are depicted in Figure \ref{fig.Fib}.

\begin{figure}[h!]
\begin{center}
\includegraphics[scale=0.4]{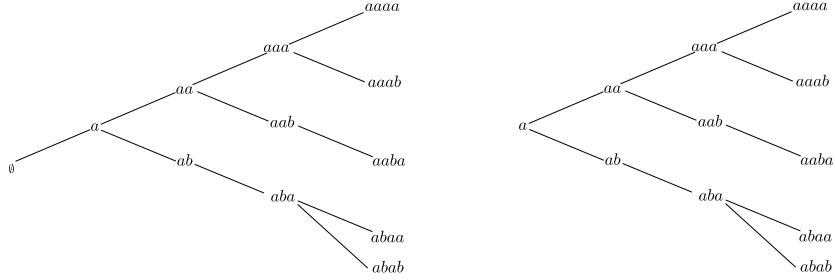}
\end{center}
\caption{\label{fig.Fib} The first levels of the Fibonacci tree $\FT$ and the derooted Fibonacci tree $\FT^*$.}
\end{figure}

\begin{corollary} \label{cor.pascofderootedFT}
The branching graph $\Gamma_{\cB^{\#}}$ is the pascalization of the derooted Fibonacci tree $\FT^*$.
\end{corollary}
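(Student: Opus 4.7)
The strategy is to construct an explicit graph isomorphism $\Phi_n : W_{\beta(n)} \to \cP(\FT^*)_n$ at each level $n$ by induction, showing simultaneously that $\Phi_n$ sends the edges of $\Gamma_{\cB^{\#}}$ to those of $\cP(\FT^*)$. In light of Lemma \ref{lem.pathwalkident}, this is equivalent to constructing a level-preserving bijection between rooted paths on $\Gamma_{\cB^{\#}}$ and rooted walks on $\FT^*$ starting at the root $a$. The initial step $\Phi_0(\emptyset) = (0, a)$ is forced.

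The heart of the argument is matching the three edge types on both sides. By Proposition \ref{prop.decompositionmodifiedmodules} (under the $\beta$-identification), each edge $u \to w$ from level $n-1$ to level $n$ of $\Gamma_{\cB^{\#}}$ is of one of three types: \emph{stay} ($w = u$), \emph{grow} ($w = u\ell$ for $\ell$ the letter at position $n$ of $\beta$), or \emph{shrink} ($u = w\ell'$, applicable only when $u$ ends in the opposite letter $\ell'$). An edge between $(n-1, v)$ and $(n, v')$ in $\cP(\FT^*)$ requires $v, v'$ to be adjacent in $\FT^*$: $v'$ is either the parent of $v$, the child $va$, or the child $vb$ (the last only when $v$ does not end in $b$, since $\FT^*$ forbids $bb$). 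I would match these as: \emph{grow} $\leftrightarrow$ down-to-child-$a$ (always available); \emph{shrink} $\leftrightarrow$ up-to-parent (available precisely when $v \neq a$); and \emph{stay} $\leftrightarrow$ up-to-parent if $v$ ends in $b$, or down-to-child-$b$ if $v$ ends in $a$ (including the root case $v = a$).

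To carry out the induction I would verify that this dictionary yields a well-defined bijection $\Phi_n$. The key consistency check, which I expect to be the main obstacle, is a parity-dependent compatibility lemma: the applicability of \emph{shrink} to $u$ (depending on whether the last letter of $u$ matches the ``shrink letter'' at level $n$) must correspond exactly to the condition $v \neq a$ on the image, and the trichotomy ``$v = a$'' / ``$v$ ends in $a$ but $v \neq a$'' / ``$v$ ends in $b$'' must translate, via $\Phi_n$, into a property of $u$ that is preserved across levels by the three edge types. Concretely, at odd $n$ the condition ``$v$ ends in $a$, not root'' should correspond to ``$u$ ends in $a$'', whereas at even $n$ it should correspond to ``$u$ ends in $b$''; this parity swap is precisely the \emph{raison d'être} of the $\beta$-identification. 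Once this bookkeeping lemma is established, a routine induction shows that for each $w \in W_{\beta(n)}$ the image $\Phi_n(w)$ is uniquely determined as the common neighbor in $\cP(\FT^*)$ of $\Phi_{n-1}(u)$ across all predecessors $u$ of $w$, and that the resulting map is bijective and edge-preserving; this proves the corollary.
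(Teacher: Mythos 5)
Your inductive edge-matching strategy is a genuinely different route from the paper's. The paper instead writes down an explicit closed-form map $\phi_n$ from vertices of (a relabeling $\widetilde{\FT}^*$ of) $\FT^*$ to words in $W_{\beta(n)}$ --- replace each letter $a$ of $\tilde w$ sitting in position $i$ by $\beta_i$ and delete each $b$ --- shows its image is exactly the set of words new at level $n$, namely $W_{\beta(n)}\setminus\bigcup_{k<n,\ k\equiv n\bmod 2}W_{\beta(k)}$, and reads the three edge types off that formula. You build the bijection level by level via an edge dictionary; both arguments end up hinging on the same parity bookkeeping, and your dictionary (grow $\leftrightarrow$ child-$a$, shrink $\leftrightarrow$ parent, stay $\leftrightarrow$ parent or child-$b$ according to the last letter of $v$) is the right one and checks out on the first few levels.

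However, the parity-dependent compatibility lemma you formulate is reversed. From the paper's connection rules, for an edge $u \to w$ from level $n-1$ to level $n$: when $n$ is \emph{even} the shrink move is $u = wa$, so shrink is available iff $u$ ends in $a$; when $n$ is \emph{odd} the shrink move is $u = wb$, so shrink is available iff $u$ ends in $b$. Matching this to the availability of the up-to-parent move under your dictionary (which forces the translation to be ``$v = \Phi_{n-1}(u)$ ends in $a$ and $v \neq a$''), the correct invariant reads: at even $n$, ``$v$ ends in $a$, not root'' iff ``$u$ ends in $a$''; at odd $n$, iff ``$u$ ends in $b$'' --- the opposite of what you wrote. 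You can already see this at level $1 \to 2$: $u = a \in W_{\beta(1)}$ has a shrink edge (to $\emptyset$, since $a = \emptyset\cdot a$), so $\Phi_1(a)$ must be the degree-$3$ vertex $(1,aa)$, which ends in $a$ and is not the root; here $n=2$ is even and $u$ ends in $a$, contradicting your stated rule. With the parity corrected the bookkeeping lemma does hold and propagates under the three edge types; combined with the equality of level cardinalities $|W_{\beta(n)}| = |\cP(\FT^*)_n|$ (both are $F_{n+3}-1$), injectivity of $\Phi_n$ gives bijectivity and the proof goes through.
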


The easiest way for the reader to convince themselves that corollary is this true, is to simply draw the derooted Fibonacci graph and pascalize it graphically. The formal proof goes as follows.

\begin{proof}
Since every word $w$ in $\FT^*$ begins with the letter $a$, we can simply delete this letter from all words. We thus identify the word $w$ with the new word $\tilde{w}$  where $\tilde{w} = aw$. We denote the graph with the new labelling  by $\widetilde{\FT}^*$. Let us define a map $\phi_n : \widetilde{\FT}^*_n \to W_{\beta(n)}, \ \tilde{w} \mapsto \phi_n(\tilde{w}) $ in the following way. First replace every letter $a$ of $\tilde{w}$ that is in an even position by the auxiliary letter $c$ and delete all letters $b$. Then replace all instances of $c$ again by $b$.
The map $\phi_n$ is injective for all $n$ and its image is the set
\begin{align*}
\phi_n(\widetilde{\FT}_n^*) = W_{\beta(n)} \bigg\backslash \left( \bigcup_{k <n, \atop k \equiv n \ \mathrm{mod} \ 2} W_{\beta(k)} \right).
\end{align*}
 Therefore, we can define a new branching graph $\phi(\tilde{\FT}^*)$ with $\phi(\widetilde{\FT}^*)_n = \phi_n(\widetilde{\FT}_n^*)$ and the connection rule $\phi_n(u) \to \phi_{n+1}(w)$ if and only if $\tilde{u}\to \tilde{w}$ in $\widetilde{\FT}^*$ for $\tilde{u} \in \widetilde{\FT}^*_n, \tilde{w} \in \widetilde{\FT}^*_{n+1}$. If $\tilde{w} = \tilde{u}b$, then $\phi_{n+1}(\tilde{w}) = \phi_{n}(\tilde{u})$. If $n$ is even and $\tilde{w} = \tilde{u}a$, then  $\phi_{n+1}(\tilde{w}) = \phi_{n}(\tilde{u})a$. Lastly, if $n$ is odd and $\tilde{w} = \tilde{u}a$, then  $\phi_{n+1}(\tilde{w}) = \phi_{n}(\tilde{u})b$. Now, combining the definition of pascalization with the connection rules found for $\Gamma_{\cB^{\#}}$, it follows that $\Gamma_{\cB^{\#}} = \cP(\phi(\tilde{\FT}^*)) \cong \cP(\FT^*)$. 
\end{proof}

\subsubsection{The branching graph of the Fuss-Catalan algebras} \label{sec.FussCat}

The hyperoctahedral quantum group $H_N^+$ is isomorphic to the free wreath product $\Z_2 \wr_* S_N^+$ and the endomorphism spaces $\End(w^{\ot k})$ of its fundamental representation $w$ are known to be isomorphic to the $s$-\emph{Fuss-Catalan algebras} of Bisch and Jones for $s=2$ and a suitable choice of loop parameters, see e.g. \cite{TW18}. The $s$-Fuss-Catalan algebras $\FC_s(k,\delta_1,\dots, \delta_s), s \geq 2$ were introduced in \cite{BiJo95} and further analyzed in \cite{La01} in the context of intermediate subfactors. For higher values of $s$, these algebras still admit an interpretation as commutants of the fundamental representation of a compact quantum group, namely the free wreath product quantum group $\Z_s \wr_* S_N^+$.

As in the Temperley-Lieb case, there is a nice description of these algebras in terms of generators and relations, see \cite{La01}, but we will stick to their description as diagram algebras, as this makes it easier to identify $\FC_2(k,\delta)$ with $A_{(\cC_{H^+},\delta)}(k)$. Many fundamental aspects of the representation theory of the Fuss-Catalan algebras including their branching graph (for fixed $s$ and increasing $k$) have been computed in \cite{BiJo95} and \cite{La01}. We will thus start this section with a summary of the results needed for a discussion of the minimal boundary of the branching graphs.  

\begin{definition} \label{def.FCdiagram}
Let $s \geq 2, \ k \geq 1$ and let $p$ be a noncrossing pair partition diagram with $s \cdot k$ upper and equally many lower points. Fix a set of labels $a_1,\dots a_s$ and label the boundary points of $p$ clockwise starting at the upper left corner by
\begin{align*}
a_1, a_2, \dots, a_s, a_s, \dots a_2, a_1, a_1, a_2 \dots. 
\end{align*}
We call $p$ a $(s,k)$\emph{-Fuss-Catalan diagram} if for any two boundary points of $p$ that are in the same block, their labels coincide. We will denote the set of all $(s,k)$-Fuss-Catalan-diagrams by $\cFC(s,k)$.
\end{definition}

The Fuss-Catalan algebras can be introduced as follows. Note that when multiplying $(s,k)$-Fuss-Catalan diagrams $p$ and $q$ in the usual way, i.e. by stacking $q$ on top of $p$ connecting lines and erasing loops, every loop generated in the middle of the diagram inherits a unique label $a_i$ from the blocks that it is build from.

\begin{definition} \label{def.FCalgebra}
Let $s \geq 2, \ k \geq 1$ and fix $\delta_1,\dots, \delta_s \in \C\backslash \{0 \}$. The $s$-\emph{Fuss-Catalan-algebra} $\FC_s(k,\delta_1,\dots, \delta_s)$ is the $\C$-vector space with basis $e_p, \ p \in \cFC(s,k)$ with multiplication $e_p \cdot e_q$ given by
\begin{align*}
e_p \cdot e_q = \delta_1^{l_1} \dots \delta_s^{l_s} e_{p \cdot q},
\end{align*}  
where $p \cdot q$ is the usual multiplication of partitions and where $l_i, \ i=1,\dots, s$ is the number of loops with label $a_i$.
\end{definition}

Once again, for fixed $s$ the set of generic parameter vectors $(\delta_1,\dots,\delta_s)$ is dense in $\C^s$ and contains the set $[2,\infty)^s$, see \cite[Corollary 2.2.5]{BiJo95}. From the perspective of categories of partitions, a good choice of parameters is $s=2$ and $\delta_1=\delta_2:= \delta, n \geq 4$ as in this case, the Fuss-Catalan algebra $\FC_2(k,\delta) := \FC_2(k,\delta,\delta) $ is isomorphic to the $k$-diagram algebra of the category $\cC_{H^+}$, see e.g. \cite{BBC07}.

Again, at a generic parameter and for arbitrary $s\geq 2$, we get an embedding of semisimple algebras $\FC_s(k,\delta_1,\dots, \delta_s) \subset \FC_s(k+1,\delta_1,\dots, \delta_s)$ by adding $s$ through strings of type $a_1,\dots,a_s$ on the right of every diagram. As before, we denote the inductive limit algebra by $\FC_s(\infty,\delta_1,\dots, \delta_s)$. As the results that will follow will not depend on the parameters (as long as they are generic), we will write $\FC_s(k), \ k=0,1,\dots, \infty$ for short. The branching graph $ \Gamma^s := \Gamma^{FC_s}$ of the inductive sequence $\FC_s(1,\delta_1,\dots, \delta_s) \subset \FC_s(2,\delta_1,\dots, \delta_s) \subset \dots$ at a generic parameter was computed in \cite{BiJo95} for $s=2$ and \cite{La01} for $s \geq 3$. It is the pascalization of the following tree.

\begin{definition} \label{def.FCtree}
The $s$\emph{-Fuss-Catalan tree} $\T^s$ is the tree constructed according to the following inductive procedure. Let $\emptyset$ be the root of the tree and label it with a $1$. If a descendent $v$ at distance $k, \ k\geq 0$ from $\emptyset$ with label $i$ is given, create $i$ children of $v$ and label them $s,s-1,\dots,s-i+1$.
\end{definition}

Note that the label of a vertex $v$ of $\T^s$ is simply its number of descendants.
For $s=2$, the tree $\T^s$ is exactly the Fibonacci tree $\FT$ discussed in the previous section. If we define the map $\psi^2: \{\emptyset,a,b\} \to \{1,2 \}, \ \emptyset \mapsto 1,\  b \mapsto 1, \ a \mapsto 2$, the label of a word $w \in \FT_n$ is simply $\psi(l(w))$, where $l(w)$ is its last letter. More generally, note that for any $s$, we can uniquely identify any vertex $v$ of $\T^s$ with a word in $s$ letters $a_1,\dots, a_s$ by identifying $a_k$ with the label $s-k+1$ through a map $\psi^s: a_k \mapsto s-k-1, \ (\emptyset \mapsto 1)$ and recording the labels on the unique path from the first level $\T^s_1$ to $v$. We will call any word appearing in this manner $s$-\emph{admissible}, so that we can identify $\T^s_n$ with the set of $s$-admissible words of length $n$. Since the branching graph $\Gamma^{s} = \cP(\T^s)$ is the pascalization of $\T^s$, any vertex $v \in \Gamma^{s}_n$ is described by a pair $(n,w)$ with an $s$-admissible word $w \in \T^s_k,$ of length $k \leq n, \ k \equiv n \mod 2$.

\subsubsection{Walk counting on (derooted) Fuss-Catalan trees and dimension formulas for $\FC_2(\infty,\delta)$ and $A_{(\cB^{\#},\delta)}(\infty)$} \label{subsec.walkcount}

The dimension $\dim_{\Gamma^{s}} (n,w)$ of the irreducible representation of $\FC_s(k)$ indexed by a vertex $(n,w)$ was computed in \cite[Section 9]{La01} using the underlying combinatorics of the tree $\T^s$ and Raney's recurrence relation for Fuss-Catalan numbers laid out in \cite[p. 360ff]{GKP94}. In this section, we will recall these results and extend them to the derooted Fuss-Catalan trees. 

Before we state Landau's formula, we will introduce the following notations which we will use throughout this section. 
\begin{itemize}
\item The derooted $s$-Fuss-Catalan tree $\widetilde{\T}^{s}$ is the $s$-Fuss-Catalan tree whose root (and root-adjacent edge) have been deleted. The root of the derooted $s$-Fuss-Catalan tree thus corresponds to the unique first level vertex of $\T^s$. 
\item As before we will refer to the length of an $s$-admissible word $w$ by $|w|$. For two vertices $v,w$ on a tree, we will write $d(v,w)$ for their distance, so that on the Fuss-Catalan tree $d(\emptyset,w) = |w|$.
\item For two $s$-admissible words $v,w$ we denote by $s(v,w)$ the longest word such that there exist words $v',w'$ with $v = s(v,w)v'$ and $w = s(v,w)w'$. Note that $s(v,w)$ is the vertex on the $s$-Fuss-Catalan tree where the paths from $v$ and $w$ to the root meet for the first time. 
\item We define the label sum of a word $w=a_{i_1}a_{i_2} \dots a_{i_n}$ as $r(w):= 1 + \sum_{k=1}^n \psi^s(a_{i_k})$. Equivalently, $r(w)$ is the sum $\sum_{k=0}^n l(v_k)$  of labels $l(v_k)$ of the vertices on the unique path $v_0=\emptyset,v_1,\dots,v_n=w$  from $\emptyset$ to $w$. Recall that $l(v)$ is nothing but the number of descendants of $v$.
\item For a vertex $v$ on the tree $\T^s$, the number of loops of length $2n$ starting and ending at $v$ that never get closer to the root than $v$ only depends on $l(v)$. We will denote this number by $C^{s,i}_n$ for $1 \leq l(v)=i \leq s$.
\item The number of walks of length $m$ on the tree $\T$ from $v$ to $w$ will be denoted by $\Wk_{\T}(m,v,w)$.
\item For an $s$-admissible word $w$ and $n \geq |w|/2$, define the quantity
\begin{align*}
{n \brack w} = \frac{r(w)}{(s+1)(n-\lceil \frac{|w|}{2} \rceil ) + r(w)} \binom{(s+1)(n-\lceil \frac{|w|}{2} \rceil ) + r(w)}{n-\lceil \frac{|w|}{2} \rceil}.
\end{align*}  
\end{itemize}

Recall that by Lemma \ref{lem.pathwalkident}, $\dim_{\Gamma^{s}} (m,w) = \Wk_{\T^s}(m,\emptyset,w)$ for every $s$-admissible word $w$ and $m\geq 1$.

\begin{theorem}[\cite{La01}] \label{thm.dimLandau}
We have 
\begin{align*}
{n \brack w} = \begin{cases} \dim_{\Gamma^{s}} (2n,w) \qquad &\text{if } |w| \text{ is even}, \\   \dim_{\Gamma^{s}} (2n-1,w) \qquad &\text{if } |w| \text{ is odd}. \end{cases}
\end{align*}
If $|w| \neq m \mod 2$, then $\dim_{\Gamma^{s}} (m,w)=0.$
\end{theorem}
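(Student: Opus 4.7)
The plan is to compute $\Wk_{\T^s}(m, \emptyset, w)$ via generating functions and the self-similar recursive structure of the $s$-Fuss-Catalan tree. The tree $\T^s$ is bipartite (its level parity equals the word-length parity), so any walk from $\emptyset$ to $w$ has length of the same parity as $|w|$. This immediately gives the vanishing statement $\dim_{\Gamma^{s}}(m,w) = 0$ when $m \not\equiv |w| \pmod 2$.

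For each label $i \in \{1, \dots, s\}$, let $E_i(z)$ denote the generating function for walks $v \to v$ that stay within the subtree rooted at a vertex $v$ with $l(v) = i$; this depends only on $i$ because all such subtrees are isomorphic by the construction of $\T^{s}$. Decomposing an excursion according to its first step down to a child (followed by a full excursion at that child and then another excursion at $v$) yields the system
\[\frac{1}{E_i(z)} = 1 - z^2 \sum_{j = s-i+1}^{s} E_j(z), \qquad 1 \leq i \leq s.\]
Substituting the ansatz $E_i = E_1^i$ reduces this system to the single equation $E_1 = 1 + z^2 E_1^{s+1}$, the defining equation of the $s$-Fuss-Catalan generating function. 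Since both the system and the Fuss-Catalan equation have unique power-series solutions with value $1$ at $z = 0$, I conclude $E_i = E_1^i$ identically.

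Next, I would decompose walks from $\emptyset$ to $w$ along the unique path $\emptyset = v_0, v_1, \dots, v_n = w$ (with $n = |w|$) by last-exit times. Letting $\tau_k$ be the last time the walk visits $v_k$, the segment $[0, \tau_0]$ is an arbitrary excursion at $\emptyset$ in the whole tree (contributing $E_{l(\emptyset)} = E_1$), each forced forward step $v_{k-1} \to v_k$ at time $\tau_{k-1}+1$ contributes a factor $z$, and each segment $[\tau_{k-1}+1, \tau_k]$ is an excursion at $v_k$ in the subtree rooted at $v_k$ (contributing $E_{l(v_k)}$). Since $r(w) = \sum_{k=0}^{|w|} l(v_k)$, this yields
\[\sum_{m \geq 0} \Wk_{\T^s}(m,\emptyset,w) \, z^m \;=\; z^{|w|} \prod_{k=0}^{|w|} E_{l(v_k)}(z) \;=\; z^{|w|}\, E_1(z)^{r(w)}.\]

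To finish, I would apply Lagrange inversion to the Fuss-Catalan equation $E_1 - 1 = z^2 E_1^{s+1}$, viewed as an equation in $u = z^2$, which yields
\[[z^{2j}]\, E_1(z)^{r} \;=\; \frac{r}{(s+1) j + r} \binom{(s+1)j + r}{j}.\]
Extracting the relevant coefficient with $r = r(w)$ and $j = n - \lceil |w|/2 \rceil$, so that $|w| + 2j$ equals $2n$ when $|w|$ is even and $2n-1$ when $|w|$ is odd, reproduces exactly ${n \brack w}$. The main technical hurdle I foresee is the clean identification $E_i = E_1^i$: the uniqueness-of-power-series argument is short but somewhat opaque, and before falling back on it I would first try to construct an explicit combinatorial bijection between excursions at a label-$i$ vertex and ordered $i$-tuples of excursions at a label-$1$ vertex, perhaps by distinguishing which of the $i$ children of $v$ a given sub-excursion descends into.
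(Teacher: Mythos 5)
Your argument is correct, and it reconstructs the proof the paper outsources to Landau \cite{La01} rather than proving itself: the essential content is that the generating function for walks $\emptyset \to w$ on $\T^s$ is $z^{|w|}G_s(z^2)^{r(w)}$, after which one reads off the closed form from the coefficient formula $[z^j]G_s^r = \frac{r}{(s+1)j+r}\binom{(s+1)j+r}{j}$ recalled in Remark~\ref{rem.generatingfunction} (the paper's phrase ``Raney's recurrence'' refers to exactly this). Your last-exit decomposition along the geodesic to $w$ is a clean derivation of the exponent $r(w)=\sum_{k=0}^{|w|}l(v_k)$. The step you flag as opaque is actually short and does not require the bijection you contemplate: to see that the ansatz $E_i=E_1^i$ solves the entire system and not only the $i=1$ equation, substitute $z^2=(E_1-1)/E_1^{s+1}$ and compute
\[
1 - z^2\sum_{j=s-i+1}^{s}E_1^j
= 1 - \frac{E_1-1}{E_1^{s+1}}\cdot E_1^{\,s-i+1}\cdot\frac{E_1^i-1}{E_1-1}
= 1 - \frac{E_1^i-1}{E_1^i}
= E_1^{-i},
\]
which is what the $i$-th equation requires; uniqueness then follows since the system determines each coefficient of each $E_i$ by induction on degree, given $E_i(0)=1$. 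With that inserted, the proof is complete.
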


\begin{remark} \label{rem.generatingfunction}
\begin{enumerate}
\item[(1)] The number of rooted loops of length $2n$ on the $s$-Fuss-Catalan tree $\T^s$ is the Fuss-Catalan number 
\[ \dim_{\Gamma^s}(2n,\emptyset) = C^s_n := \frac{1}{(s+1)n + 1} \binom{(s+1)n + 1}{n}. \] 
\item[(2)] The ratio test shows that the generating function 
\[ G_s(z) \ = \ \sum_{n=0}^{\infty} C^s_n z^n  \]
has radius of convergence $ \tfrac{s^{s}}{(s+1)^{s+1}}$. Using Stirling's formula, it follows that also for the critical value $ \tfrac{s^{s}}{(s+1)^{s+1}}$, the series $\sum_{n=0}^{\infty} C^s_n \left( \tfrac{s^{s}}{(s+1)^{s+1}} \right)^n < \infty$ converges. In fact, explicit formulas for the probability measures that have $(C^s_n)_{n\geq 0}$ as their moment sequence have been computed. These measures are supported on $\left[0,\tfrac{(s+1)^{s+1}}{s^{s}}\right]$ and explicit formulae can be found in \cite{MPZ13}.
\item[(3)] By \cite[p. 360ff]{GKP94}, the function $G_s$ satisfies the recursion $G_s(z) = z G_s(z)^{s+1} +1$. Moreover, it is shown there that the $n$-th coefficient in the series expansion of $G_s^l$ is given by
\begin{align*}
[z^n]G_s(z)^l = \frac{1}{(s+1)n + l} \binom{(s+1)n + l}{n}.
\end{align*}
\end{enumerate}
\end{remark}

The proof of Theorem \ref{thm.dimLandau} can be adapted to the derooted $s$-Fuss-Catalan tree, yielding in particular an explicit formula for the dimensions of the irreducible representations of the algebras $A_{(\cB^{\#},\delta)}(n), \ n \geq 0$ at the generic parameter. Note that the distance to the root in $\widetilde{\T}^s$ of a word $w$ is $|w|-1$. Let 
\begin{align*}
{n \brack w}_{\#} = \frac{r(w)-1}{(s+1)(n-\lceil \frac{|w|-1}{2} \rceil ) + r(w)-1} \binom{(s+1)(n-\lceil \frac{|w|-1}{2} \rceil ) + r(w)-1}{n-\lceil \frac{|w|-1}{2} \rceil}.
\end{align*}
 for $n \geq \frac{|w|-1}{2}.$
 
\begin{proposition}
In the derooted $s$-Fuss-Catalan tree $\widetilde{\T}^{s}$, we have 
\begin{align*}
{n \brack w}_{\#} = \begin{cases} \Wk_{\widetilde{\T}^{s}}(2n,\emptyset_{\widetilde{\T}^{s}},w)\qquad &\text{if } |w| \text{ is even}, \\   \Wk_{\widetilde{\T}^{s}}(2n-1,\emptyset_{\widetilde{\T}^{s}},w) \qquad &\text{if } |w| \text{ is odd}. \end{cases}
\end{align*}
In particular, the dimension of the irreducible representations $A_{(\cB^{\#},\delta)}(2n)$ indexed by a word $(2n,w), \ |w|$ odd, is 
\begin{align*}
\frac{r(w)-1}{3(n-\lceil \frac{|w|-1}{2} \rceil ) + r(w)-1} \binom{3(n-\lceil \frac{|w|-1}{2} \rceil ) + r(w)-1}{n-\lceil \frac{|w|-1}{2} \rceil}.
\end{align*}
The same formula holds for the dimension of $(2n-1,w)$ with $|w|$ even.
\end{proposition}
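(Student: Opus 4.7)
My plan is to separate the proposition into two steps. First I would establish the walk-count formula on $\widetilde{\T}^s$; the dimension formula for $A_{(\cB^{\#},\delta)}$ then follows immediately by specializing $s=2$, invoking Corollary \ref{cor.pascofderootedFT} (which identifies $\Gamma_{\cB^{\#}}$ with $\cP(\widetilde{\T}^2) = \cP(\FT^*)$), and applying Lemma \ref{lem.pathwalkident} to translate dimensions of irreducibles of $A_{(\cB^{\#},\delta)}(m)$ indexed by $(m,w)$ into walk counts $\Wk_{\widetilde{\T}^2}(m,\emptyset_{\widetilde{\T}^2},w)$.

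For the walk-count step the key structural observation is that $\emptyset_{\T^s}$ is a pendant vertex in $\T^s$ whose only neighbour is $\emptyset_{\widetilde{\T}^s}$. Define the generating functions
\begin{align*}
F_w(z) := \sum_{m\geq 0} \Wk_{\T^s}(m,\emptyset_{\T^s},w)\, z^m,\qquad
\tilde F_w(z) := \sum_{m\geq 0} \Wk_{\widetilde{\T}^s}(m,\emptyset_{\widetilde{\T}^s},w)\, z^m.
\end{align*}
For $w \neq \emptyset_{\T^s}$, any walk on $\T^s$ from $\emptyset_{\T^s}$ to $w$ can be split uniquely at its last visit to $\emptyset_{\T^s}$: the piece before is a closed walk at $\emptyset_{\T^s}$, the following step is forced to go to $\emptyset_{\widetilde{\T}^s}$, and the remainder is a walk in $\widetilde{\T}^s$ from $\emptyset_{\widetilde{\T}^s}$ to $w$ that never revisits $\emptyset_{\T^s}$. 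This decomposition yields the clean multiplicative identity $F_w(z) = z\,F_{\emptyset_{\T^s}}(z)\,\tilde F_w(z)$.

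Next, I would extract closed forms for the two factors on the right from Landau's Theorem \ref{thm.dimLandau}. Combining the theorem with Raney's identity $[z^n]G_s(z)^l = \tfrac{l}{(s+1)n+l}\binom{(s+1)n+l}{n}$ from Remark \ref{rem.generatingfunction}, a short parity check (matching the even/odd split of $|w|$ in Landau's formula with the even/odd powers of $z$ in $F_w(z)$) shows that $F_{\emptyset_{\T^s}}(z)=G_s(z^2)$ and more generally
\begin{align*}
F_w(z)\;=\;z^{|w|}\,G_s(z^2)^{r(w)}.
\end{align*}
Substituting into the identity above and dividing yields
\begin{align*}
\tilde F_w(z)\;=\;z^{|w|-1}\,G_s(z^2)^{r(w)-1}.
\end{align*}
Reading off the coefficient of $z^m$ (nonzero exactly when $m \equiv |w|-1 \pmod 2$) and applying Raney once more produces exactly ${n \brack w}_{\#}$, with $m=2n$ in the case parities demand and $m=2n-1$ otherwise.

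The main obstacle in this plan is the parity bookkeeping needed to derive $F_w(z) = z^{|w|} G_s(z^2)^{r(w)}$ cleanly from Landau's formula: one must match $\lceil |w|/2 \rceil$ in the definition of ${n \brack w}$ with the parity-dependent exponent $m$ of Landau's walk-count statement, and verify that the two cases reassemble into a single generating function. Beyond this, the argument is essentially a functional-equation manipulation that exploits the cleanliness of the pendant-vertex decomposition.
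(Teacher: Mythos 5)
Your proof is correct, and it follows a genuinely different route from the paper's. The paper's own proof is a one-line reference: it invokes Landau's proof of \cite[Theorem 11]{La01} and asserts that ``by the same argument'' one identifies $\Wk_{\widetilde{\T}^s}(2n,\emptyset,w)$ with $[z^{n-\lceil \frac{|w|-1}{2}\rceil}]G_s(z)^{r(w)-1}$; the reader is implicitly expected to re-run Landau's recursion on the modified tree. Your argument is self-contained and never needs to open \cite{La01}. The pendant-vertex observation (the original root $\emptyset_{\T^s}$ has the unique neighbour $\emptyset_{\widetilde{\T}^s}$, since its label is $1$) yields the last-visit decomposition and hence the generating-function identity $F_w(z)=z\,F_{\emptyset_{\T^s}}(z)\,\tilde F_w(z)$. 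Repackaging Theorem \ref{thm.dimLandau} as $F_w(z)=z^{|w|}G_s(z^2)^{r(w)}$ is indeed a routine parity-matching computation with Raney's coefficient formula, and then the result falls out by dividing by $zG_s(z^2)$. (As a sanity check, $F_{\emptyset_{\T^s}}(z)=G_s(z^2)$ is consistent with the recursion $G_s(z)=1/(1-zG_s(z)^s)$, itself a rearrangement of $G_s(z)=zG_s(z)^{s+1}+1$.) What your approach buys: the derooted formula becomes a literal corollary of the original one, and it lays bare the structural reason for the shift $r(w)\mapsto r(w)-1$ — one factor of $G_s$ is absorbed by the closed-walk generating function at the old root and one power of $z$ by the forced step across the pendant edge. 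You also handle the final parity bookkeeping correctly (a walk of length $m$ in $\widetilde{\T}^s$ to $w$ is nonempty precisely when $m\equiv |w|-1\pmod 2$), which matches the ``in particular'' clause of the proposition, with $A_{(\cB^{\#},\delta)}(2n)$ paired with $|w|$ odd.
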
 

\begin{proof}
By the same argument as in the proof of \cite[Theorem 11]{La01}, $\Wk_{\widetilde{\T}^{s}}(2n,\emptyset,w)$ is the coefficient of $z^{n-\lceil \frac{|w|-1}{2} \rceil}$ in the $r(w)-1$ power of the generation function $G_s$ of Remark \ref{rem.generatingfunction}. The formula then follows from the third part of Remark \ref{rem.generatingfunction}.
\end{proof}

\begin{remark}
In the special case, when $w=a$ is the root of the derooted Fibonacci tree, another formula for the dimension of $(2n,a)$ is given in \cite[Proposition 4.3]{We13} and it is easy to check that both formulas yield the same number.
\end{remark}



\subsubsection{The minimal boundary for random walks on the Fibonacci tree} \label{sec.deFintheorem}

The observations of Section \ref{subsec.walkinterpretation} and Theorem \ref{thm.boundarytraces} turn the problem of classifying extremal traces on $\FC_s(\infty)$ into a classification problem for random walks on Fuss-Catalan trees. For homogeneous trees, central ergodic random walks have been classified in \cite{VM15}. However, the symmetric nature of the homogeneous trees renders the combinatorics of that problem more simple than for the Fuss-Catalan trees and we will need to make regular use of the walk counting formulas of Subsection \ref{subsec.walkcount}. An interesting result that we prove along the way is the law of large numbers mentioned in the introduction (Theorem \ref{thmB}).  Note that laws of large numbers for central measures on branching graphs have also been proven in other cases, see e.g. \cite{Me17} for the Young graph. The combinatorics of these examples are quite different to the situation we face in this section.

Let us start this section by introducing a few notations that we will use down the road.
Let $\T$ be a locally finite rooted tree in which every node has at least one successor. An infinite path $(\emptyset=t_1,t_2,\dots)$ on a tree $\T$ is typically called an \emph{end} of $\T$. We will denote the set of ends of $\T$ by $\partial \T$. Note that is no violation of our previous notation $\partial \Gamma$ for the minimal boundary of a branching graph $\Gamma$. In fact, if $\Gamma = \T$, the set of ends of $\T$ is in bijection with the minimal boundary of $\T$ as we can associate to every end $t$ the Dirac measure $\delta_t$ and all ergodic central measures on the space of ends are of this form. With this identification in mind, we  interpret $\partial \T$ as a topological space (with the weak topology). Also note that for every vertex $v$ of $\T$ and every end $t= (\emptyset=t_1,t_2,\dots)$, there exists a unique vertex $t_m$ of $t$ such that $v$ is a descendant or equal to $t_m$. The \emph{geodesic ray} $[v,t\rangle$ from $v$ to $t$ is the unique path $(v,\dots,t_m,t_{m+1},\dots)$ on the tree that leads from $v$ to $t_m$ and then continuous on $t$.

\begin{definition} \label{def.convergetoend}
\begin{itemize}
\item We say that a sequence $(x_n)_{n\geq 0}, \ x_n \in \T$ converges to an end $t \in \partial \T^s$ if for every vertex $v \in \T$, the length of the common part $[v,x_n] \cap [v,t\rangle$ of the geodesic path $[v,x_n]$ from $v$ to $x_n$ and the geodesic ray $[v,t\rangle$ from $v$ to $t$ tends to infinity as $n \to \infty$. 
\item Similarly, we say that a path $(n,x_n)_{n \geq 0}$ in the pascalized graph $\cP(\T)$ converges to $t$ if $(x_n)_{n \geq 0}$ converges to $t$ in the sense above.
\end{itemize}
\end{definition}

\begin{definition} \label{def.tdirected}

Let $\T$ be a locally finite rooted tree, let $t \in \partial \T$ be an end and let $v,w$ be neighboring vertices of $\T$.
\begin{itemize} 
\item We will call the (ordered) pair $(v,w)$ $t$-\emph{directed} if $w$ lies on the unique geodesic from $v$ to $t$. 
\item Similarly, we will call an edge $((k,v),(k+1,w))$ in the pascalized graph $\cP(\T)$ $t$-directed if $(v,w)$ is $t$-directed in $\T$.
\end{itemize}
\end{definition}

Note that for every pair of neighboring vertices $v,w$ of $\T$, either $(v,w)$ or $(w,v)$ is $t$-directed.

\begin{proposition} \label{prop.randomwalkcentral}
Let $S$ be a random walk on a locally finite tree $\T$ starting at the root. Then $S$ is central if and only if there exists a constant $\eta \geq 0$ such that the transition probabilities of $S$ satisfy 
\begin{align} \label{eq.crossingprobabilities}
 p(v,w)p(w,v) = \eta 
\end{align}
for every pair of neighboring vertices $v,w$.
\end{proposition}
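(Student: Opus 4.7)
The plan is to invoke Lemma \ref{lem.pathwalkident}, which identifies paths on $\cP(\T)$ with walks on $\T$; under this identification, centrality of $S$ becomes the statement that any two walks $(\emptyset, \omega_1, \ldots, \omega_n)$ and $(\emptyset, \omega'_1, \ldots, \omega'_n)$ starting at the root and sharing the same endpoint carry the same probability under $S$. The proof then reduces to a weighted edge-counting argument on the tree.

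For the sufficiency direction, I would fix a walk $\omega = (\omega_0, \ldots, \omega_n)$ from $\emptyset$ to $x$, orient each edge $e$ as $u_e \to w_e$ with $u_e$ closer to the root, and let $n^\pm_\omega(e)$ count the number of $u_e \to w_e$ and $w_e \to u_e$ transitions in $\omega$. Since $\T$ is a tree, the net crossing of each edge is forced: $n^+_\omega(e) - n^-_\omega(e) = \mathbf{1}_{e \in [\emptyset, x]}$. Grouping the product $\bP(\omega) = \prod_i p(\omega_i, \omega_{i+1})$ by edge and applying $p(u_e, w_e)p(w_e, u_e) = \eta$ collapses the expression to
\[\bP(\omega) = \eta^{(n-|x|)/2} \prod_{e \in [\emptyset, x]} p(u_e, w_e),\]
using that the total number of inward crossings of a walk of length $n$ ending at $x$ must be $(n-|x|)/2$. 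The right-hand side depends only on $(n, x)$, so $S$ is central.

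For the necessity direction, I would set $q(e) := p(v, w) p(w, v)$ for each edge $e = \{v, w\}$ and show $q$ is constant. The base case compares the two-step walks $(\emptyset, v_i, \emptyset)$ for distinct neighbors $v_1, v_2$ of the root: centrality immediately yields $q(\{\emptyset, v_1\}) = q(\{\emptyset, v_2\})$. To propagate outward, fix an edge $e = \{u, v\}$ with $|u| < |v|$ and any further edge $e' = \{v, w\}$ with $w \neq u$; write $\gamma = (\emptyset = u_0, \ldots, u_m = u)$ for the geodesic from the root to $u$, and compare the walks
\begin{align*}
\omega_1 &= (u_0, \ldots, u_m, v, u_m, v, u_m, u_{m-1}, \ldots, u_0),\\
\omega_2 &= (u_0, \ldots, u_m, v, w, v, u_m, u_{m-1}, \ldots, u_0),
\end{align*}
both of length $2m+4$ and both ending at $\emptyset$. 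Centrality gives $\bP(\omega_1) = \bP(\omega_2)$, and cancelling the common factor associated with the traversal of $\gamma$ together with one round-trip across $e$ reduces the identity to $q(e) = q(e')$. Iterating outward from the root and using the connectedness of $\T$, $q$ takes a single value $\eta \geq 0$ on all edges.

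The main technical hurdle is that the cancellation in the propagation step is only legitimate when the common factor is positive. I would handle this by treating the cases $\eta > 0$ and $\eta = 0$ separately: in the former, the base step forces all root-incident transitions to be positive and an inductive propagation shows all transition probabilities on $\T$ are positive, making the cancellations valid; in the latter, one checks directly from centrality that the walk cannot cross any edge in both directions (otherwise a two-step excursion would force a positive $q$-value at the root), so that $q(e) = 0$ on every edge reachable by $S$, and the sufficiency formula trivializes to the geodesic probability.
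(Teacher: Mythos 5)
Your proof is correct and follows the same underlying strategy as the paper's: both directions exploit the fact that on a tree, any walk from $\emptyset$ to $x$ must cross each edge equally often in the two directions, except for a single net forward crossing along the geodesic from $\emptyset$ to $x$. For sufficiency your edge-counting formulation (via $n^\pm_\omega(e)$) is a cleaner phrasing of the paper's ``decompose into loops at each geodesic vertex'' argument, and you have the correct exponent $\eta^{(n-|x|)/2}$ — the paper's displayed formula reads $\eta^{n-|x|}$, which is a typographical slip (take $|x|=0$, $n=2$: the probability of $\emptyset\to v\to\emptyset$ is $\eta$, not $\eta^2$). For necessity the paper compares the full geodesic loop to $x$ against the geodesic loop to its father padded by a two-step excursion at the root, concluding $q(\text{last edge})=q(\text{first edge})$; you instead compare two length-$(2m+4)$ loops differing only in whether the second excursion from $v$ crosses $\{u,v\}$ again or a child edge $\{v,w\}$, concluding $q(e)=q(e')$ for adjacent edges and then propagating from the root. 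These are different choices of loops but the same mechanism. The one place you genuinely go beyond the paper is the explicit treatment of the degenerate case: the paper cancels a common factor without comment, and your observation that one must separately rule out a positive $q$-value on a reachable edge when $\eta=0$ (via the two-step-excursion comparison) closes a gap that the published argument leaves implicit. One small caveat worth making explicit: the identity $p(v,w)p(w,v)=\eta$ can only be forced on edges \emph{reachable} by $S$; on an unreachable edge the product is unconstrained by centrality, so the statement should be read (as you effectively read it) with this reachability restriction, or under the standing assumption that all forward transition probabilities are positive.
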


\begin{proof}
Assume first that Equation \ref{eq.crossingprobabilities} holds for all pairs of neighboring vertices $v,w$. Let $x$ be a vertex of $\T$ and let $n \equiv |x| \mod 2$. We need to show that $S$ takes all $n$-step walks from $\emptyset$ to $x$ with the same probability. Let $\emptyset = t_0,t_1,\dots, t_{|x|} = x$ be the unique path on $\T$ from $\emptyset$ to $x$. We can then decompose any $n$-step walks from $\emptyset$ to $x$ as
\begin{center}
(a loop on $t_0$) $(t_0 \rightarrow t_1)$ (a loop on $t_1$) $(t_1 \rightarrow t_2) \cdots (t_{|x|-1} \rightarrow x)$ (a loop on $x$).
\end{center}
Since $\T$ is a tree, every edge on one of the loops has to be traversed equally many times in both directions. Since by assumption $p(v,w)p(w,v) = \eta$ is constant for all edges $(v,w)$, it follows that every $n$-step walk from $\emptyset$ to $x$ is taken with probability 
\[ \eta^{n-|x|} \prod_{i=0}^{|x|-1} p(t_i,t_{i+1}). \]   
Conversely, it suffices to note the following for a given vertex $x$ with $|x|=n$: by centrality the loop of length $2n$ obtained by following the geodesic from $\emptyset$ to $x$ and back has the same probability as the loop obtained by only following this geodesic up to the father of $x$, returning and taking $2$ steps between the root and level $1$. The claim then follows by combining this fact with an induction over the distance to the root. 
\end{proof}

Given an end $t \in \partial \T^s$ and a parameter $\eta \in [0, \tfrac{s^{s}}{(s+1)^{s+1}}]$, we will now define a random walk $S_{(t,\eta)}$ starting at the root of the $s$-Fuss-Catalan tree as follows. If $a$ is the unique vertex connected to the root, we define the transition probabilities $p_{(t,\eta)}(\emptyset,a) = 1, \ p_{(t,\eta)}(a,\emptyset) = \eta.$ Next consider an edge $(v,w)$ that does not lie on the end $t$. For such an edge, we define the transition probabilities to be 
\begin{align} \label{eq.transprob}
p_{(t,\eta)}(v,w) = \begin{cases} G_s(\eta)^{-l(v)} \quad &\text{if } (v,w) \text{ is } t\text{-directed,} \\ \eta \cdot G_s(\eta)^{l(w)} \quad &\text{else},  \end{cases}
\end{align}
where $G_s(z)  = \sum_{n=0}^{\infty} C^s_n z^n$ is the generating function of the $s$-Fuss-Catalan numbers discussed in Remark \ref{rem.generatingfunction}. Since $\eta \leq \tfrac{s^{s}}{(s+1)^{s+1}}$, the number $G_s(\eta)$ is well-defined. In addition, $G_s(\eta) \geq 1$, so that the first value for $p_{(t,\eta)}(v,w)$ is bounded by $1$. The fact that also $\eta \cdot G_s(\eta)^{l} \leq 1$ for all $l=1,\dots,s$ follows directly from the recursion formula $G_s(z) = zG_s(z)^{s+1} +1$, see Remark \ref{rem.generatingfunction}.

So far, we have defined all transition probabilities except for those between neighboring vertices $t_i, t_{i+1}, i \geq 1$ on our prescribed end $t=(t_i)_{i \geq 0}$.

\begin{lemma} \label{lem.welldefinedwalk}
Let $t=(t_i)_{i \geq 0} \in \partial \T^s$ and $\eta \in [0, \tfrac{s^{s}}{(s+1)^{s+1}}]$. Then there exists a unique central random walk $S_{(t,\eta)}$ on $\T^s$ such that
\begin{itemize}
\item $p_{(t,\eta)}(\emptyset,a) = 1, \ p_{(t,\eta)}(a,\emptyset) = \eta$ for the unique level $1$ vertex $a$;
\item For any edge $(v,w)$ that does not lie on $t$, the transition probablities $p_{(t,\eta)}(v,w),p_{(t,\eta)}(w,v)$ are given by Equation \ref{eq.transprob};
\item For any edge $(v,w)$, we have $p_{(t,\eta)}(v,w) p_{(t,\eta)}(w,v) = \eta$.
\end{itemize}
\end{lemma}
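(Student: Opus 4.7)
By Proposition \ref{prop.randomwalkcentral}, the third bullet of the lemma is equivalent to centrality, so the plan reduces to exhibiting and uniquely identifying transition probabilities $p_{(t,\eta)}$ that satisfy the three bullets together with the sum-to-one condition $\sum_{w\sim v}p_{(t,\eta)}(v,w)=1$ at every vertex $v$. The first two bullets completely determine $p_{(t,\eta)}$ on every edge that is not of the form $(t_k,t_{k+1})$, and the first bullet together with the crossing constraint pins down $\beta_1:=p_{(t,\eta)}(\emptyset,a)=1$ and $\gamma_1:=p_{(t,\eta)}(a,\emptyset)=\eta$. The remaining unknowns are $\beta_k:=p_{(t,\eta)}(t_{k-1},t_k)$ and $\gamma_k:=p_{(t,\eta)}(t_k,t_{k-1})$ for $k\geq 2$, and my plan is to show that at each $t_k$ the sum-to-one condition combined with the crossing identity $\beta_k\gamma_k=\eta$ forces a unique recursive rule.

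To make the recursion explicit, I would first extract from the functional equation $G_s(z)=zG_s(z)^{s+1}+1$ of Remark \ref{rem.generatingfunction}(3) the identity
\[
\sum_{c\text{ child of }v}\eta\,G_s(\eta)^{l(c)} \;=\; 1-G_s(\eta)^{-l(v)},
\]
valid for every vertex $v$, using that the children of $v$ carry labels $s,s-1,\dots,s-l(v)+1$. At an off-$t$ vertex this identity confirms that Equation \eqref{eq.transprob} is internally consistent with sum-to-one. Applied at $t_k$ for $k\geq 1$ and isolating the unknown $\beta_{k+1}$, it yields the clean recursion
\[
\beta_{k+1} \;=\; G_s(\eta)^{-l(t_k)} + \eta\,G_s(\eta)^{l(t_{k+1})} - \gamma_k, \qquad \gamma_{k+1} \;=\; \eta/\beta_{k+1},
\]
which determines $(\beta_k,\gamma_k)_{k\geq 1}$ uniquely from the initial data and thus settles uniqueness.

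For existence it remains to check that the recursion produces genuine transition probabilities, namely $\beta_k,\gamma_k\in[0,1]$, or equivalently $\beta_k\in[\eta,1]$ for all $k$. The upper bound $\beta_{k+1}\leq 1$ follows at once from the sum-to-one condition at $t_k$ and the nonnegativity of the off-$t$ contributions (which, as observed just after \eqref{eq.transprob}, are individually bounded by $1$). The main obstacle is the lower bound $\beta_{k+1}\geq\eta$; I expect to handle this by an induction on $k$ after rewriting the recursion purely in terms of $h:=G_s(\eta)$ via the relations $\eta=(h-1)/h^{s+1}$ and $\eta\,h^j=h^{j-s-1}(h-1)$, and exploiting that the labels $l(t_k)$ and $l(t_{k+1})$ are constrained by the tree structure (the label of $t_{k+1}$ must lie in $\{s,s-1,\dots,s-l(t_k)+1\}$). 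The hypothesis $\eta\leq s^{s}/(s+1)^{s+1}$ is precisely what guarantees $h=G_s(\eta)<\infty$ (Remark \ref{rem.generatingfunction}(2)), keeping every quantity in the recursion under control.
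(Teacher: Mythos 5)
Your approach is essentially the same as the paper's: verify the sum-to-one condition at off-$t$ vertices using the functional equation for $G_s$, then determine the probabilities $\beta_k=p_{(t,\eta)}(t_{k-1},t_k)$ and $\gamma_k=p_{(t,\eta)}(t_k,t_{k-1})$ recursively, and invoke Proposition~\ref{prop.randomwalkcentral} for centrality. Your explicit recursion
\[
\beta_{k+1}=G_s(\eta)^{-l(t_k)}+\eta\,G_s(\eta)^{l(t_{k+1})}-\gamma_k,\qquad \gamma_{k+1}=\eta/\beta_{k+1}
\]
is exactly what the paper leaves implicit behind ``follows by induction over $i$''.

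The one real gap is the lower bound $\beta_{k+1}\ge\eta$, which you flag as the main obstacle but only say you ``expect'' to handle. It closes immediately if you strengthen the inductive hypothesis from $\beta_k\ge\eta$ to
\[
\gamma_k \;\le\; G_s(\eta)^{-l(t_k)},\qquad\text{equivalently}\qquad \beta_k\;\ge\;\eta\,G_s(\eta)^{l(t_k)} .
\]
The base case is $\gamma_1=\eta\le G_s(\eta)^{-s}=G_s(\eta)^{-l(t_1)}$, which is the inequality $\eta G_s(\eta)^s\le 1$ already noted after Equation~\eqref{eq.transprob}. For the step, plug $\gamma_k\le G_s(\eta)^{-l(t_k)}$ into the recursion to get $\beta_{k+1}\ge\eta\,G_s(\eta)^{l(t_{k+1})}\ge\eta$, hence $\gamma_{k+1}=\eta/\beta_{k+1}\le G_s(\eta)^{-l(t_{k+1})}$, closing the loop. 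The upper bound $\beta_{k+1}\le 1$ then follows from sum-to-one at $t_k$ together with $\gamma_k\ge 0$ (which the same induction supplies) and the nonnegativity of the off-$t$ terms. With that inserted, your argument is complete and matches the paper's intent with more detail than the paper itself provides.
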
 

\begin{proof}
Let $x$ be a vertex that does not lie on $t$. We have already shown above that $0 \leq p_{(t,\eta)}(x,w) \leq 1$ for every neighbor $w$ of $x$. Let $v$ be the unique neighbor of $x$ such that $(x,v)$ is $t$-directed. Since $x \neq t_i$ for all $i \geq 0$, we must have $|v| = |x|-1$. Therefore 
\[ \sum_{w \text{ nb. of } x} p_{(t,\eta)}(x,w) = 1 \]
is equivalent to 
\begin{align*}
G_s(\eta)^{l(x)}  = 1 + \sum_{w \text{ descend. of } x} \eta G_s(\eta)^{l(x)+ l(w)} 
\end{align*}
and this equality follows from the computation
\begin{align*}
1 + \sum_{w \text{ descend. of } x} \eta G_s(\eta)^{l(x)+ l(w)} &= 1 + \sum_{i=1}^{l(x)} \eta G_s(\eta)^{l(x)+ s-i+1}\\
&= 1 + \sum_{i=0}^{l(x)-1} \eta G_s(\eta)^{s+1 + i} \\
&= 1 + \sum_{i=0}^{l(x)-1} G_s(\eta)^{i} (G_s(\eta) -1) \\
&= G_s(\eta)^{l(x)},
\end{align*}
where we use the relation $G_s(z) = z G_s(z)^{s+1} +1$. The fact that the missing transition probabilities $p_{(t,\eta)}(t_i,t_{i+1})$ and $p_{(t,\eta)}(t_{i+1},t_i)$ are uniquely defined now follows by induction over $i$.
 The fact that $S_{(t,\eta)}$ is central follows from Proposition \ref{prop.randomwalkcentral}.
\end{proof}

Note that when $\eta =0$, the random walk $S_{(t,0)}$ converges deterministicly to the end $t$.




Now for $t \in \partial \T^s$ and $\eta \in [0, \tfrac{s^{s}}{(s+1)^{s+1}}]$ define the Markov measure $\nu_{(t,\eta)}$ on the space of infinite paths on $\cP(\T^s)$ by 
\begin{align*}
p_{\nu_{(t,\eta)}}((n,v),(n+1,w)) : = p_{(t,\eta)}(v,w),
\end{align*}
so that $\pi(\nu_{(t,\eta)})$ is the distribution of the random walk $S_{(t,\eta)}$. By definition of the transition probabilities, the measure $\nu_{(t,\eta)}$ is time-homogeneous.




We are now ready to state a more precise version of Theorem \ref{thmA}. The rest of this section will be devoted to its proof. Most of the partial results leading up to the proof, will be formulated for general Fuss-Catalan trees. 

\begin{theorem} \label{thm.classificationFC}
The set of all ergodic central measures on the space $(\Omega_{\cP(\FT)}, \cF_{\cP(\FT)})$ of infinite paths on $\cP(\FT)$ conincides with the family of Markov measures
\begin{align*}
\Sigma := \{ \nu_{(t,\eta)} \ ; \ t \in \partial \FT, \ \eta \in [0,4/27] \}.
\end{align*}
\end{theorem}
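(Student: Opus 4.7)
The plan is to prove the theorem by two matching inclusions: every $\nu_{(t,\eta)} \in \Sigma$ is ergodic central, and every ergodic central measure on $(\Omega_{\cP(\FT)}, \cF_{\cP(\FT)})$ is of this form. Centrality of each $\nu_{(t,\eta)}$ already follows from Lemma \ref{lem.welldefinedwalk} combined with Proposition \ref{prop.randomwalkcentral} and Lemma \ref{lem.pathwalkident}. For ergodicity I would verify the Vershik--Kerov criterion of Theorem \ref{thm.ergodicmethod}: for $\nu_{(t,\eta)}$-almost every infinite path $(\omega_i)$ and every vertex $v$ of $\cP(\FT)$, the ratios $\dim(v,\omega_i)/\dim(\omega_i)$ converge to $\nu_{(t,\eta)}(X_n = v)/\dim(v)$. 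Here both Corollary \ref{cor.convergenceRW} (the walk $S_{(t,\eta)}$ converges to $t$) and Theorem \ref{thmB} (the law of large numbers for exit times) provide the asymptotic control needed, while Landau's dimension formula (Theorem \ref{thm.dimLandau}) together with the functional identity $G_2(z) = zG_2(z)^3 + 1$ evaluates the limits in closed form in terms of $G_2(\eta)$.

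For the converse, let $\nu$ be an arbitrary ergodic central measure and view it via Lemma \ref{lem.pathwalkident} as a central random walk on $\FT$. Proposition \ref{prop.randomwalkcentral} supplies a constant $\eta = \eta(\nu) \geq 0$ with $p(v,w) p(w,v) = \eta$ on every edge. By centrality, the common probability of each rooted loop of length $2k$ is a fixed power of $\eta$ and the number of such loops is the Fuss--Catalan number $C^2_k$, so the return probability $\nu(X_{2k} = \emptyset)$ equals $C^2_k \cdot (\text{const})^k$. For this to stay bounded as $k \to \infty$, one is forced into the radius of convergence of $G_2$, yielding $\eta \in [0, 4/27]$. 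The next structural step is to prove that the walk converges $\nu$-a.s.\ to some end $t \in \partial \FT$; this uses the bound on return probabilities combined with a standard tightness argument on locally finite trees, and by ergodicity the limit $t$ is $\nu$-a.s.\ constant.

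The heart of the converse is the identification of the transition probabilities $p_\nu(v,w)$ at every edge with those in Equation \eqref{eq.transprob}. For an edge $(v,w)$ not lying on $t$, Theorem \ref{thm.ergodicmethod}(ii) gives
\begin{equation*}
p_\nu(v,w) \;=\; \lim_{i \to \infty} \frac{\dim(w,\omega_i)}{\dim(v,\omega_i)}
\end{equation*}
for $\nu$-a.e.\ $(\omega_i)$. Since $\omega_i \to t$, one may apply Landau's formula ${n \brack w}$ from Theorem \ref{thm.dimLandau} together with the generating-function identity to extract the limit, which comes out to $G_2(\eta)^{-l(v)}$ when $(v,w)$ is $t$-directed and $\eta\, G_2(\eta)^{l(w)}$ otherwise, matching $p_{(t,\eta)}(v,w)$. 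Along the end $t$ itself, the two missing probabilities at each $t_i$ are then uniquely determined by the crossing identity $p(t_i,t_{i+1}) p(t_{i+1},t_i) = \eta$ together with stochasticity, thereby recovering the walk $S_{(t,\eta)}$ of Lemma \ref{lem.welldefinedwalk}.

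The main obstacle is the precise asymptotic analysis of the ratios $\dim(w,\omega_i)/\dim(v,\omega_i)$. Isolating the contributions coming from subpaths that do or do not cross the edge $(v,w)$ requires a careful accounting, and this is exactly where Theorem \ref{thmB} becomes indispensable: the law of large numbers on the exit times $N_k$ controls how many loops a typical path performs near the geodesic to $t$, which is what forces the Fuss--Catalan generating function evaluated at $\eta$ to appear in the limiting transition probability. All remaining pieces---verifying centrality, checking stochasticity, and reconstructing transitions along $t$---are routine given the combinatorial machinery of Subsection \ref{subsec.walkcount}.
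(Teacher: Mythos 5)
Your overall plan — prove two inclusions, extract the structure constant $\eta$, use the ergodic method to pin down the transition probabilities, use convergence to an end and the law of large numbers for exit times — lines up with the paper at a high level, but there is a genuine gap in the ergodicity direction.

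You propose to prove that $\nu_{(t,\eta)}$ is ergodic by \emph{verifying} the ergodic method of Theorem \ref{thm.ergodicmethod}. This cannot work as stated: Theorem \ref{thm.ergodicmethod} gives a \emph{necessary} condition for ergodicity, not a sufficient one, and the paper explicitly warns right after the theorem that ``the ergodic method does not make any statements on whether or not the measures obtained from it are in fact ergodic (they need not be)''. Checking that the dimension ratios converge to the Markov transition probabilities along $\nu_{(t,\eta)}$-typical paths would at best place $\nu_{(t,\eta)}$ in the Martin boundary; it says nothing about tail triviality. The paper's actual argument is of a different nature: one first shows (Lemmas \ref{lem.convergingtoend}, \ref{lem.etaMarkovchains}, Proposition \ref{prop.firstdirection}) that \emph{every} ergodic central measure lies in $\tilde\Sigma = \{\nu_{(t,\eta)} : \eta < 4/27\} \cup \{\nu^*_{(t,4/27)}\}$. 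Then, to see that $\nu_{(t,\eta)}$ (for $\eta < 4/27$) is ergodic, one writes its ergodic decomposition $\nu_{(t,\eta)} = \int_{\tilde\Sigma} \xi\, d\rho(\xi)$ and observes that the almost-sure end (Corollary \ref{cor.convergenceRW}) and the almost-sure value of the exit-time functional $\lim_k (N_k - k)/r(t_k)$ (Theorem \ref{thm.llnexit}) are deterministic functionals that separate the members of $\tilde\Sigma$ — crucially the injectivity of $f(\eta) = 4\eta G'(\eta)/G(\eta)$ is what makes the exit-time limit a one-to-one function of $\eta$. If $\rho$ were not a point mass, the set of paths with the correct end and exit-time limit would have $\nu_{(t,\eta)}$-measure strictly less than $1$, a contradiction. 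Your sketch invokes Theorem \ref{thmB} only as asymptotic control inside the ergodic-method limits; its real role is as a uniqueness device in the ergodic decomposition.

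You also do not address the boundary case $\eta = 4/27$, which needs separate care. There the uniform convergence of $\sum_l C^{l(w)}_l z^l$ to $G(z)^{l(w)}$ on $[0,4/27]$ fails to give the closed form directly, so the paper introduces an abstract time-homogeneous candidate $\nu^*_{(t,4/27)}$ and only afterwards identifies it with $\nu_{(t,4/27)}$ by a weak-closure argument: the ergodic measures $\nu_{(t,\eta)}$, $\eta < 4/27$, converge weakly to $\nu_{(t,4/27)}$ as $\eta \to 4/27$, and the set of ergodic measures is weakly closed while there is at most one ergodic central measure with end $t$ and structure constant $4/27$. Without this step, the family you produce omits the critical parameter value.

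The rest of your outline — centrality from Lemma \ref{lem.welldefinedwalk} and Proposition \ref{prop.randomwalkcentral}, the structure constant from the crossing identity, the bound $\eta \le 4/27$ from return probabilities and the radius of convergence of $G_2$, convergence to an end via transience, and the closed-form identification of transition probabilities off $t$ using Landau's formula and $G_2(z) = zG_2(z)^3 + 1$ — is consistent with the paper's treatment.
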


Denote by $\tilde{S}_{(t,\eta)}$ the random walk starting at the root of $\widetilde{\FT}$ with the same transition probabilities as $S_{(t,\eta)}$ outside of the fixed end $t \in \partial \widetilde{\FT} \cong \partial \FT$ and denote by $\tilde{\nu}_{(t,\eta)}$ its pullback to $\cP(\widetilde{\FT})$.

\begin{corollary} \label{cor.classFC}
\begin{itemize}
\item[(a)] A full list of extremal traces on the infinite Fuss-Catalan algebra $\FC_2(\infty,\delta)$ is given by
\begin{align*}
\{ \tau_{(t,\eta)} \ ; \ t \in \partial \FT, \ \eta \in [0,4/27] \}
\end{align*} 
where
\begin{align*}
\tau_{(t,\eta)}(x) = \sum_{(n,v_i) \in \Gamma^2_n} \nu_{(t,\eta)}\left(X_n = (n,v_i)\right) \frac{\tau_{(n,v_i) }(x)}{\dim_{\Gamma^2}(n,v_i)} \qquad (x \in \FC_2(n,\delta)).
\end{align*}
Here $\tau_{(n,v_i) }$ denotes the (unnormalized) trace on the simple direct summand indexed by $(n,v_i) $ in the decomposition of $\FC_2(n,\delta)$.
\item[(b)] The same statement holds for the extremal traces on the algebra  $A_{(\cB^{\#},\delta)}(\infty)$ if $\nu_{(t,\eta)}$ is replaced by $\tilde{\nu}_{(t,\eta)}$.
\end{itemize}
\end{corollary}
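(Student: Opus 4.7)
The plan is to deduce the corollary from two results already established in the excerpt: the general Vershik--Kerov-type correspondence of Theorem \ref{thm.boundarytraces}, which provides a homeomorphism of Choquet simplices between central measures on the branching graph of an inductive tower of finite-dimensional semisimple $*$-algebras and tracial states on the enveloping $C^*$-algebra (sending extremal to extremal), and the classification of ergodic central measures on $\cP(\FT)$ carried out in Theorem \ref{thm.classificationFC}. In other words, once the branching graph is correctly identified in each case, the statement is a mechanical transcription.

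For part (a), the first step is to recall from Section \ref{sec.FussCat} that the branching graph $\Gamma^2$ of the inductive sequence $\FC_2(1,\delta) \subset \FC_2(2,\delta) \subset \cdots$ is exactly $\cP(\FT)$, as shown in \cite{BiJo95}. Theorem \ref{thm.boundarytraces} then supplies a homeomorphism between $\cM_c(\cP(\FT))$ and the trace simplex of $C^*(\FC_2(\infty,\delta))$, and identifies the bijection at the level of individual measures/traces via the explicit summation formula. Plugging the classification $\partial \cP(\FT) = \{\nu_{(t,\eta)} : (t,\eta) \in \partial\FT \times [0,4/27]\}$ from Theorem \ref{thm.classificationFC} into that formula yields exactly the expression for $\tau_{(t,\eta)}$ claimed in the statement. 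Distinctness of the parameters $(t,\eta)$ at the level of measures (part of Theorem \ref{thm.classificationFC}) transfers to distinctness of the traces by the homeomorphism.

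For part (b), the analogous identification of the branching graph is Corollary \ref{cor.pascofderootedFT}, which says that $A_{(\cB^{\#},\delta)}(\infty)$ has branching graph $\cP(\widetilde{\FT})$. The paper notes (just above the corollary) that the proof of Theorem \ref{thm.classificationFC} goes through \emph{mutatis mutandis} for $\widetilde{\FT}$, with the family $\tilde{\nu}_{(t,\eta)}$ replacing $\nu_{(t,\eta)}$; substituting the corresponding classification into the formula of Theorem \ref{thm.boundarytraces} finishes the argument. The only genuinely non-routine point, and the place where one must be mildly careful, is verifying that the adaptation to the derooted tree really is verbatim: the law of large numbers for exit times (Theorem \ref{thmB}), the transience analysis underlying Corollary \ref{cor.convergenceRW}, and the asymptotic use of the walk-counting formulas from Subsection \ref{subsec.walkcount} must all be reread starting one level up. Since these ingredients only depend on the tail behaviour of walks on $\FT$ deep away from the root, removing $\emptyset$ changes nothing in the ergodic analysis, so this verification is essentially bookkeeping and does not constitute a real obstacle.
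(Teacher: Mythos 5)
Your proposal is correct and follows essentially the same route as the paper, which obtains the corollary precisely by combining Theorem \ref{thm.boundarytraces} (the homeomorphism between central measures on the branching graph and tracial states, with the explicit summation formula) with Theorem \ref{thm.classificationFC}, using $\Gamma^2=\cP(\FT)$ for part (a) and Corollary \ref{cor.pascofderootedFT} together with the (asserted-verbatim) derooted analogue of the classification for part (b). Your remark that the derooted adaptation is the only point needing a second look matches the paper's own treatment, which handles it by the same observation.
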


\begin{proof}
The corollary follows by combining Theorem \ref{thm.classificationFC} with Theorem \ref{thm.boundarytraces}.
\end{proof}




Recall that a Markov chain on a tree \cite{VM15} is \emph{transient} if its number of returns to $\emptyset$ is finite with probability one.

\begin{lemma} \label{lem.transient}
Let $\nu$ be an ergodic central measure on the space $(\Omega_{\Gamma^s},\cF_{\Gamma^s})$ of infinite paths on the branching graph $\Gamma^s = \cP(\T^s)$. Then the Markov chain $\pi(\nu)$ on $\T^s$ is transient. 
\end{lemma}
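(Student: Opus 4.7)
My plan is to identify $\pi(\nu)$ as a time-homogeneous central random walk on $\T^s$ carrying a single structure constant $\eta$ as in Proposition \ref{prop.randomwalkcentral}, compute the root-return probabilities explicitly in terms of $\eta$, and conclude via the convergence of the Fuss-Catalan generating function at its critical radius together with a Borel-Cantelli argument.

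First I would verify that any central $\nu$ on $(\Omega_{\Gamma^s},\cF_{\Gamma^s})$ is Markov. This is immediate from Definition \ref{def.centralmeasure}: for every rooted path $(\omega_0,\dots,\omega_n)$ in $\cP(\T^s)$ one has $\nu(X_0 = \omega_0,\dots,X_n = \omega_n) = \nu(X_n = \omega_n)/\dim(\omega_n)$, and dividing this expression at length $n+1$ by the analogous expression at length $n$ shows that the conditional probability $\nu(X_{n+1} = \omega_{n+1}\mid X_0,\dots,X_n)$ depends only on $\omega_n$ and $\omega_{n+1}$. Ergodicity then enters through Theorem \ref{thm.ergodicmethod}(ii): the transition probability across an edge $((n,v),(n+1,w))$ of $\cP(\T^s)$ equals $\lim_i \dim(w,\omega_i)/\dim(v,\omega_i)$, which depends only on the pair of neighbouring vertices $v,w \in \T^s$ and not on the level $n$. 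Consequently $\pi(\nu)$ is a time-homogeneous central random walk on $\T^s$, so Proposition \ref{prop.randomwalkcentral} yields a constant $\eta \geq 0$ satisfying $p_\nu(v,w)\,p_\nu(w,v) = \eta$ on every edge of $\T^s$.

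Next I would compute the root-return probability. Because $\T^s$ is a tree, any rooted loop of length $2n$ traverses each of its edges the same number of times in each direction; grouping these traversals in pairs and invoking the structure relation, the Markov probability of any specific such loop is therefore $\eta^n$. Combined with Remark \ref{rem.generatingfunction}(1), which counts the rooted loops of length $2n$ as $C^s_n$, this gives
\[
\nu(X_{2n}=(2n,\emptyset)) \;=\; C^s_n\,\eta^n.
\]
Since each such value must be at most $1$ and $\lim_n (C^s_n)^{1/n} = 1/r_s$ with $r_s = s^s/(s+1)^{s+1}$, we necessarily have $\eta \leq r_s$; Remark \ref{rem.generatingfunction}(2) then delivers
\[
\sum_{n\geq 0} \nu(X_{2n}=(2n,\emptyset)) \;=\; \sum_{n\geq 0} C^s_n\,\eta^n \;\leq\; G_s(r_s) \;<\; \infty.
\]
The Borel-Cantelli lemma now forces $\{X_{2n}=(2n,\emptyset)\}$ to occur only finitely often $\nu$-almost surely, and since $\T^s$ is bipartite the chain $\pi(\nu)$ cannot be at $\emptyset$ at odd times either, so it returns to the root only finitely often almost surely. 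The only step that requires some care is the deduction of time-homogeneity from ergodicity via Theorem \ref{thm.ergodicmethod}(ii); the remainder amounts to elementary tree combinatorics and a convergent-series estimate.
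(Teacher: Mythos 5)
Your proof takes essentially the same route as the paper's (express the root-return probability as the Fuss--Catalan count times a per-loop weight, then use convergence of $\sum_n C_n^s r_s^n$ at the critical value $r_s = s^s/(s+1)^{s+1}$), but there is a genuine gap at the point where you pass from ergodicity to time-homogeneity.

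You write that the ergodic-method limit $\lim_i \dim(w,\omega_i)/\dim(v,\omega_i)$ ``depends only on the pair of neighbouring vertices $v,w\in\T^s$ and not on the level $n$,'' but the quantity in Theorem \ref{thm.ergodicmethod}(ii) is $\dim_{\cP(\T^s)}\bigl((n+1,w),\omega_i\bigr)/\dim_{\cP(\T^s)}\bigl((n,v),\omega_i\bigr)$, and by Lemma \ref{lem.pathwalkident} this equals $\Wk(N_i-n-1,w,w_{N_i})/\Wk(N_i-n,v,w_{N_i})$, which manifestly depends on $n$. There is no a priori reason the limit as $i\to\infty$ should be independent of $n$ — indeed the paper explicitly warns in Section \ref{subsec.walkinterpretation} that central measures on $\cP(\Gamma)$ can be time-inhomogeneous as random walks on $\Gamma$. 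Without time-homogeneity you cannot invoke Proposition \ref{prop.randomwalkcentral}, you do not get a single structure constant $\eta$, and the return probability becomes $\nu\bigl(X_{2n}=(2n,\emptyset)\bigr)=C_n^s\prod_{k=0}^{n-1}q_k$ with potentially varying factors $q_k=p_\nu\bigl((2k+1,a),(2k+2,\emptyset)\bigr)$. The crude bound $\nu\bigl(X_{2n}=\cdot\bigr)\leq 1$ gives only $\prod_{k<n}q_k\leq 1/C_n^s\sim c\,n^{3/2}r_s^n$, which is not enough for the Borel--Cantelli estimate since $C_n^s\cdot n^{3/2}r_s^n$ is merely bounded, not summable.

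The paper's own proof resolves exactly this issue by arguing by contradiction: it assumes recurrence, which provides, for a typical path $\omega$, a subsequence $(n_{i_k},\emptyset)$ with $w_{n_{i_k}}=\emptyset$. Along this subsequence the ergodic-method ratios can be evaluated explicitly via Theorem \ref{thm.dimLandau}, and the resulting formula is visibly independent of the level $m$, giving time-homogeneity and forcing $p_\nu(a,\emptyset)=r_s$ exactly. Only then does the walk-counting estimate produce the contradiction. Note also that the paper proves time-homogeneity in the transient case (Lemma \ref{lem.convergingtoend}) only \emph{after} Lemma \ref{lem.transient}, so you cannot cite that either without circularity. If you want to salvage your argument you would need to split into cases on recurrence versus transience; in the recurrent case you would in effect reproduce the paper's subsequence computation, at which point your nice observation that $C_n^s\eta^n\leq 1$ forces $\eta\leq r_s$ becomes superfluous since the computation pins down $\eta=r_s$ directly. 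The remainder of your argument (grouping edge traversals into down-up pairs, the series estimate, Borel--Cantelli, and the bipartiteness remark) is correct once the constant $\eta$ is in hand.
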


\begin{proof}
Consider an ergodic central measure $\nu$ on $(\Omega_{\Gamma^s},\cF_{\Gamma^s})$ and suppose that $\pi(\nu)$ is recurrent, i.e. the Markov chain returns to the root $\emptyset$ infinitely many times, with positive probability. Translating this statement into a statement for the branching graph $\Gamma^s = \cP(\T^s)$, we see that the set
\begin{align*}
W= \{(n_i,w_i)_{i\geq 1} \in \Omega_{\Gamma^s} \ ; \ w_i = \emptyset \ \text{for infinitely many i} \}
\end{align*}
has positive probability $\nu(W) >0$. By the ergodic method (Theorem \ref{thm.ergodicmethod}), we know that for almost every path $(n_i,w_i)_{i\geq 1} \in W$,    for every neighbouring pair of vertices $v \in \T^s_k, x\in \T^s_l, \ |k-l| =1$ and any $m  \geq k, m= k \mod 2$, the transition probability from $v$ to $x$ on step $m+1$ is
\begin{align*}
p_{\nu}((m,v),(m+1,x))= \lim_{i \to \infty} \frac{\dim((m+1,x), (n_i,w_i))}{\dim((m,v),(n_i,w_i))}.
\end{align*}
By definition of the sequence $(n_i,w_i)_{i\geq 1} $, there exists a subsequence $(n_{i_k},w_{i_k})$ such that $w_{i_k} = \emptyset$ for all $k$, whence
\begin{align*}
p_{\nu}((m,v),(m+1,x))= \lim_{k \to \infty} \frac{\dim((m+1,x), (n_{i_k},\emptyset))}{\dim((m,v),(n_{i_k},\emptyset))}.
\end{align*}
By the path-walk identification Lemma \ref{lem.pathwalkident}, for $n >m$, $\dim((m,v),(n,\emptyset))$ is exactly the number of $n-m$-step walks on $\T_{\FC_s}$ starting at $v$ and ending at $\emptyset$. Reversing the direction of these walks, and setting $l(n) = \lceil n-m/2 \rceil$, it follows from Theorem \ref{thm.dimLandau} that
\begin{align*}
\dim((m,v),(n,\emptyset)) = \dim((0,\emptyset),(n-m,v)) = { l(n) \brack v}
\end{align*} 
so that 
\begin{align*}
p_{\nu}((m,v),(m+1,x))= \lim_{k \to \infty} \frac{{ l(n_{i_k}-1) \brack x}}{{ l(n_{i_k}) \brack v}}.
\end{align*}
From this, it is not hard to derive explicit formula for the transition probabilities which will be of the form 
\begin{align*}
p_{\nu}((m,v),(m+1,x))= K \cdot\frac{r(x)}{r(v)}
\end{align*}
with a factor $K \geq 0$ that only depends on $s, r(v)-r(w) \in \{ -s,\dots, s \}$ and $|v|-|x| \in \{ -1,1 \}$. In particular, the transition probabilities $p_{\nu}((m,v),(m+1,x)) = p_{\nu}(v,x)$ are independent of $m$. Now, choosing $x=\emptyset$ and $v$ to be the unique level $1$ vertex (for the proof for $\widetilde{\T}^s$ choose any one of the two level 1 vertices), the transition probability  $p_{\nu}((m,v),(m+1,\emptyset))$ takes the value
\begin{align*}
p_{\nu}(v,\emptyset) = \frac{s^{s}}{(s+1)^{s+1}}.
\end{align*} 
Consider the $2n$ step rooted loop that jumps from the root to $v$ and back $n$ times. Our computation then implies that the probability that the Markov chain $\pi(\nu)$ follows this walk in the first $2n$ steps is $\left( \tfrac{s^{s}}{s+1^{s+1}} \right)^n$. By centrality any other $2n$ step rooted loop must have the same probability. As there are exactly $C_n^s$ of these loops, the probability to return to the root after $2n$ steps is
$p_n = C_n^s \left( \frac{s^{s}}{s+1^{s+1}} \right)^n$. By Remark \ref{rem.generatingfunction}, it thus follows that 
\begin{align*}
\sum_{n=0}^{\infty} p_n = \sum_{n=0}^{\infty} C_n^s \left( \frac{s^{s}}{(s+1)^{s+1}} \right)^n < \infty,
\end{align*} 
which implies that $\pi(\nu)$ is not recurrent, contradictory to our assumption.
\end{proof}

\begin{remark}
Note that the argument of the proof of the previous lemma combined with Remark \ref{rem.generatingfunction} also shows that for any time-homogeneous ergodic central measure $\nu$, the transition probability from the first level vertex $v$ to $\emptyset$ must be bounded above by $p_{\nu}(v,\emptyset) \leq \tfrac{s^{s}}{(s+1)^{s+1}}. $
\end{remark}

\begin{lemma} \label{lem.convergingtoend}
Let $\nu$ be an ergodic central measure on $(\Omega_{\Gamma^s},\cF_{\Gamma^s})$. Then there exists an end $t \in \partial \T^s$ to which $\nu$-almost every path on $\cP(\T^s)$ converges. Moreover, for every $\nu$-typical path $(n,w_n)_{n \geq 0}$, the limit
\begin{align*}
\lim_{n \to \infty} \frac{n - |w_n|}{2r(w_n)} \in [0,+\infty]
\end{align*}
exists and is almost surely constant.
\end{lemma}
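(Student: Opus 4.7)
\emph{Overview.} The lemma contains two distinct claims, which I would handle separately.

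\emph{Convergence to a single end.} By Lemma \ref{lem.transient} the Markov chain $\pi(\nu)$ on $\T^s$ is transient, so each vertex is visited only finitely often. Since $\T^s$ is a locally finite tree, this already forces almost sure convergence to an end: for every vertex $v$ the walk is eventually confined to a single connected component of $\T^s \setminus \{v\}$, and applying this observation successively (first at $v = \emptyset$, then at the chosen child of $\emptyset$, and so on) produces a nested sequence of subtrees defining a random end $t = (t_k)_{k \geq 0} \in \partial \T^s$. Because this limit end depends only on the tail of the path, it is invariant under the tail equivalence $\cT$; ergodicity of $\nu$ therefore forces it to be almost surely equal to a single deterministic element of $\partial \T^s$.

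\emph{Existence of the ratio limit.} For the second claim I apply the ergodic method (Theorem \ref{thm.ergodicmethod}) at the single choice $(k, v) = (2, \emptyset)$, yielding the $\nu$-almost sure convergence
\[
\frac{\dim_{\Gamma^s}(n-2, w_n)}{\dim_{\Gamma^s}(n, w_n)} \;\xrightarrow[n\to\infty]{}\; \rho \;:=\; \frac{\nu(X_2 = (2,\emptyset))}{\dim_{\Gamma^s}(2,\emptyset)}.
\]
Writing $M_n := (n - |w_n|)/2$ and $R_n := r(w_n)$ and substituting the formula of Theorem \ref{thm.dimLandau} into both dimensions (uniformly in the parity of $|w_n|$, as in both cases $M_n$ arises as the shared ``height parameter''), the ratio simplifies after cancellation of factorials to an explicit rational function whose numerator and denominator each factor as a product of exactly $s+1$ linear forms in $(M_n, R_n)$. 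Factoring $R_n^{s+1}$ out of numerator and denominator and setting $\lambda_n := M_n/R_n = (n - |w_n|)/(2 r(w_n))$, which is precisely the quantity of interest, the ratio becomes
\[
\frac{\dim_{\Gamma^s}(n-2, w_n)}{\dim_{\Gamma^s}(n, w_n)} \;=\; f(\lambda_n) + O(R_n^{-1}), \qquad f(\lambda) \;:=\; \frac{\lambda(s\lambda+1)^s}{((s+1)\lambda+1)^{s+1}}.
\]

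A logarithmic-derivative calculation gives $f'(\lambda)/f(\lambda) = 1/[\lambda(s\lambda+1)((s+1)\lambda+1)]$, hence $f'(\lambda) = (s\lambda+1)^{s-1}/((s+1)\lambda+1)^{s+2} > 0$, so $f$ is a continuous strictly increasing bijection from $[0,\infty]$ onto $[0, s^s/(s+1)^{s+1}]$. The convergence $w_n \to t$ forces $|w_n| \to \infty$ and hence $R_n \geq |w_n| \to \infty$, so the error term disappears; thus $f(\lambda_n) \to \rho$, and by continuity of $f^{-1}$ we conclude $\lambda_n \to f^{-1}(\rho) \in [0,\infty]$. Almost sure constancy of the limit follows from ergodicity, since $\liminf_n \lambda_n$ and $\limsup_n \lambda_n$ are both $\cT$-invariant.

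\emph{Main obstacle.} The crucial technical step is the asymptotic collapse in the preceding paragraph: although the dimension ratio depends a priori on the three integer parameters $n$, $|w_n|$ and $r(w_n)$, it degenerates in the limit $R_n \to \infty$ to a function of the single ratio $\lambda_n$. This is not an accident but reflects the precise homogeneity of Landau's Fuss-Catalan dimension formula ${n \brack w}$, itself an algebraic shadow of the functional equation $G_s(z) = z\,G_s(z)^{s+1} + 1$ from Remark \ref{rem.generatingfunction}. Verifying that no residual dependence on $|w_n|$ separately survives in the leading order, and that $f$ is genuinely a homeomorphism on the compactification $[0, \infty]$, are the two points that need to be checked carefully.
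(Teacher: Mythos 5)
Your proposal is correct and follows essentially the same route as the paper's proof. Part one (transience from Lemma \ref{lem.transient} plus ergodicity forcing a deterministic limiting end) is identical. Part two also matches: the paper applies the ergodic method to the transition probability $p_{\nu}((k+1,a),(k+2,\emptyset))$, which by the path--walk identification equals $\lim_n \Wk(n-k-2,\emptyset,w_n)/\Wk(n-k,\emptyset,w_n)$, while you apply it to the ratio $\dim((2,\emptyset),(n,w_n))/\dim((n,w_n))$; since every $n$-step walk from $\emptyset$ must begin by stepping to $a$, these are the same quantity (at $k=0$), and both are then fed into Landau's formula to produce the same function $f(\lambda)=\lambda(s\lambda+1)^s/((s+1)\lambda+1)^{s+1}$. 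One small imprecision worth flagging: writing the ratio as $f(\lambda_n)+O(R_n^{-1})$ implicitly asks for a uniform error bound across the noncompact range $\lambda_n \in [0,\infty)$, which is not automatic. The clean statement is that the ratio is a rational function of $(\lambda_n,\varepsilon_n)$ with $\varepsilon_n=1/R_n$ which extends continuously to the compact set $[0,\infty]\times[0,\varepsilon^*]$ and restricts to $f$ on $\varepsilon=0$; then $R_n\to\infty$ (from transience) gives $\text{ratio}_n - f(\lambda_n)\to 0$ by uniform continuity, and you can invoke the homeomorphism property of $f$ on $[0,\infty]$ exactly as you do. This also handles the subsequential-limit case $\lambda_n\to\infty$ cleanly, which the naive $O(R_n^{-1})$ bookkeeping does not.
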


\begin{proof}
Since $\pi(\nu)$ is transient by Lemma \ref{lem.transient}, $\nu$-almost every path $x=(n,w_n)$ of $\cP(\T^s)$ converges to some end $t_x \in \partial \T^s$. As for every $t \in \partial \T^s$, the set of paths that converge to $t$ is invariant under the tail relation on $\cP(\T^s)$ (i.e. changes of starting paths of finite length), ergodicity implies that $\pi(\nu)$-almost all walks converge to the same end. 

Let $(n,w_n)_{n \geq 0}$ be a $\nu$-typical path and $k \geq 0$ even. Arguing as in Lemma \ref{lem.transient}, by the path-walk identification and the ergodic method, the limit 
\begin{align*}
p_{\nu}((k+1,a),(k+2,\emptyset)) &= \lim_{n \to \infty} \frac{\dim((k+2,\emptyset), (n,w_n))}{\dim((k+1,a),(n,w_n))} \\
&= \lim_{n \to \infty} \frac{\Wk(n-k-2,\emptyset,w_n)}{\Wk(n-k-1,a,w_n)} \\
&= \lim_{n \to \infty} \frac{\Wk(n-k-2,\emptyset,w_n)}{\Wk(n-k,\emptyset,w_n)}
\end{align*}
exists. Let$l=k/2$. If $n=2m$ is even, then 
\begin{align*}
\frac{\Wk(n-k-2,\emptyset,w_n)}{\Wk(n-k,\emptyset,w_n)} &= \frac{{m-l-1 \brack w_{2m}}}{{m-l \brack w_{2m}}}
\end{align*}
and if $n=2m-1$ is odd then 
\begin{align*}
\frac{\Wk(n-k-2,\emptyset,w_n)}{\Wk(n-k,\emptyset,w_n)} &= \frac{{m-l-1 \brack w_{2m-1}}}{{m-l \brack w_{2m-1}}}.
\end{align*}
Hence, in any case 
\begin{align*}
\lim_{n \to \infty} \frac{\Wk(n-k-2,\emptyset,w_n)}{\Wk(n-k,\emptyset,w_n)} &= \lim_{n \to \infty} \frac{{\lceil (n-k-1)/2 \rceil \brack w_{n}}}{{\lceil (n-k)/2 \rceil \brack w_{n}}} \\
&= \lim_{n \to \infty} \left(\frac{n-|w_{n}|}{2 r(w_n)} \right)  \frac{(s(\tfrac{n-|w_{n}|}{2r(w_n)})+1)^{s} }{((s+1)(\tfrac{n-|w_{n}|}{2r(w_n)})+1)^{(s+1)}},
\end{align*}
where we used $\lim_{n\to \infty} r(w_n) = \infty$ which holds since $\pi(\nu)$ is transient. Note that this limit depends no longer on $k$, i.e. $p_{\nu}((k+1,a),(k+2,\emptyset)) =: p_{\nu}(a,\emptyset)$. Set $c_n = \tfrac{n-|w_{n}|}{2 r(w_n)}$ and $f:[0,+\infty) \to [0,+\infty), \ f(x) = \tfrac{x(sx+1)^s}{((s+1)x+1)^{s+1}}$, so that $\eta = \lim_{n \to \infty} f(c_n).$
One checks that $f$ is a strictly increasing continuous function with $\lim_{x \to \infty} f(x) = \tfrac{s^s}{(s+1)^{s+1}}$ so that as a function onto $[0,\tfrac{s^s}{(s+1)^{s+1}})$, $f$ is invertible with continuous inverse $f^{-1}: [0,\tfrac{s^s}{(s+1)^{s+1}}) \to [0,\infty)$. It follows that $\lim_{n \to \infty} c_n = f^{-1}(\eta)$ exists in $[0,+\infty]$, where we set $f^{-1}(\tfrac{s^s}{(s+1)^{s+1}}):= + \infty$. Note that our argument is independent of the choice of typical path $(n,w_n)$ (alternatively, apply ergodicity of $\nu$ to the limiting random variable).
\end{proof}

Since we have shown in the proof of the previous lemma that the product 
\[p_{\nu}((k+1,a),(k+2),\emptyset) =\eta \]
is independent of $k$, we can repeat the proof of Proposition \ref{prop.randomwalkcentral} to get the following.

\begin{lemma} \label{lem.etaMarkovchains}
If $\nu$ is a central ergodic measure on $\cP(\T^s)$, then there is a constant $\eta \in  [0,\frac{s^{s}}{(s+1)^{s+1}}]$ such that
\[p_{\nu}((k,v),(k+1,w) \cdot p_{\nu}((k+1,w),(k+2,v)) =\eta \]
 for every edge $((k,v),(k+1,w))$ with $|w|=|v|+1$ on $\cP(\T^s)$.

\end{lemma}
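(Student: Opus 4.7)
The plan is to establish time-homogeneity of the transition probabilities $p_\nu((k,v),(k+1,w))$ (i.e.\ that they depend only on the edge $(v,w)$, not on $k$) and then to invoke the converse direction of Proposition \ref{prop.randomwalkcentral} applied to the resulting central random walk $\pi(\nu)$ on $\T^s$.

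Time-homogeneity is obtained by the same ergodic-method computation used in Lemma \ref{lem.convergingtoend}. By Theorem \ref{thm.ergodicmethod} together with the path-walk identification of Lemma \ref{lem.pathwalkident},
\begin{equation*}
p_\nu((k, v), (k+1, w)) \; = \; \lim_{n \to \infty} \frac{\Wk_{\T^s}(n - k - 1,\, w,\, w_n)}{\Wk_{\T^s}(n - k,\, v,\, w_n)}
\end{equation*}
along a $\nu$-typical path $(n, w_n)_{n\geq 0}$. Shifting $k \mapsto k+2$ corresponds to reindexing $n \mapsto n-2$ along that path, and by Lemma \ref{lem.convergingtoend} the asymptotic parameter $c_n = (n - |w_n|)/(2 r(w_n))$ converges to the same constant $c \in [0,+\infty]$ along the shifted subsequence. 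Repeating the Stirling-type asymptotic estimate of Lemma \ref{lem.convergingtoend}, but now for walks starting at an arbitrary vertex, one shows that the limit depends only on the edge $(v,w)$ and on $c$, so $p(v,w):=p_\nu((k,v),(k+1,w))$ is well defined.

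With $\pi(\nu)$ now a time-homogeneous central random walk on $\T^s$, the converse direction of Proposition \ref{prop.randomwalkcentral} applies verbatim. Concretely, for any $x \in \T^s$ with geodesic $\emptyset = t_0, t_1, \dots, t_{|x|} = x$, centrality forces the $2|x|$-loop at $\emptyset$ traversing this geodesic forth and back to have the same probability as the $2|x|$-loop that goes only to $t_{|x|-1}$ and back, appended with the two extra steps $\emptyset \to a \to \emptyset$. After cancelling the common factors, this yields
\begin{equation*}
p(t_{|x|-1}, x)\, p(x, t_{|x|-1}) \; = \; p(\emptyset, a)\, p(a, \emptyset) \; = \; \eta,
\end{equation*}
using $p(\emptyset, a) = 1$ and the constant $\eta = p(a,\emptyset)$ from Lemma \ref{lem.convergingtoend}. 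Since every edge of $\T^s$ appears as the terminal edge of some geodesic from the root, the equality $p(v,w)p(w,v) = \eta$ holds for every neighboring pair, and the bound $\eta \leq s^s/(s+1)^{s+1}$ has already been recorded in the remark following Lemma \ref{lem.transient}.

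The principal obstacle in this plan is the walk-counting asymptotics for an arbitrary starting vertex $v \neq \emptyset$, since Theorem \ref{thm.dimLandau} provides the closed form ${n \brack w}$ only for walks emanating from the root. One overcomes this by decomposing any walk from $v$ to $w_n$ into excursions at $v$ (into its descending subtrees and into the single upward branch toward its parent) together with a final first-passage segment in the direction of the limit end $t$, and by applying the Fuss-Catalan generating-function identity $G_s(z) = z G_s(z)^{s+1} + 1$ of Remark \ref{rem.generatingfunction}. In the resulting asymptotic ratio, the additional contribution from the upward excursions above $v$ is subleading and cancels between numerator and denominator, which is exactly what ensures that the limit agrees with $\eta$ independently of $v$.
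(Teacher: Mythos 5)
Your proposal reads the paper's one-line proof (``repeat the proof of Proposition \ref{prop.randomwalkcentral}'') the way it almost certainly must be read, and---importantly---you put your finger on the genuine difficulty in that instruction. The converse direction of Proposition \ref{prop.randomwalkcentral} cancels products of transition probabilities along a loop because those probabilities are time-independent; for a general central Markov measure on $\cP(\T^s)$ the two rooted loops of length $2m+2$ one compares pick up transition factors at time indices shifted by $2$, and without time-homogeneity these do not cancel. (Concretely, the comparison gives $p_{\nu}((m,v),(m+1,w))\,p_{\nu}((m+1,w),(m+2,v)) \cdot \prod_{j} p_{\nu}((m+2+j,\cdot),(m+3+j,\cdot)) = \eta\,\prod_{j} p_{\nu}((m+j,\cdot),(m+1+j,\cdot))$, and the two products differ by a time shift.) So deducing the lemma requires, in effect, knowing that the time shift is harmless, which is the content of time-homogeneity. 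You are right that this has to be established first, and right that the ergodic method plus the path--walk identification reduces it to the assertion that $\lim_n \Wk(n-k-2,v,w_n)/\Wk(n-k,v,w_n)$ is the same for every vertex $v$ and every $k$.

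Where your write-up still has a real gap is exactly the place you flag as ``the principal obstacle.'' The decomposition you sketch---excursions at $v$ together with a final first-passage segment toward $t$---is the right tool, but two refinements are needed before it closes the argument. First, the generating-function bookkeeping applies cleanly only to \emph{downward} excursions below $v$; the ``excursions into the single upward branch toward its parent'' do not correspond to a fixed finite generating function, because the structure above $v$ is the whole remaining tree. The natural way around this is to induct on $|v|$: decompose walks from $v$ to $w_n$ at the \emph{first passage} of its father $u$, so that $\Wk(m,v,w_n)=\sum_{l\ge 0} C^{l(v)}_l\,\Wk(m-2l-1,u,w_n)$ when $w_n$ is outside the subtree below $v$, and then divide and pass to the limit using the inductive hypothesis $\Wk(m-2,u,w_n)/\Wk(m,u,w_n)\to\eta$, dominated convergence, and the identity $\sum_l C_l^{l(v)}\eta^l=G_s(\eta)^{l(v)}<\infty$ for $\eta\le s^s/(s+1)^{s+1}$. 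Second, this decomposition is only valid when $w_n$ eventually leaves the subtree below $v$, i.e.\ when $v$ does not lie on $t$; the vertices $v=t_j$ on the limit end need a separate treatment (one can instead anchor at a child of $t_j$ along the end, or argue by continuity once the off-end case is done). Neither point is fatal---both can be handled---but the proposal as written does not yet address either, and the cancellation claim ``the additional contribution from the upward excursions above $v$ is subleading'' is asserted rather than proved. I would also note that the paper itself is very terse here; your critique that the lemma is not a literal repetition of Proposition \ref{prop.randomwalkcentral} is a useful observation, and your route is the right one to actually fill it in.
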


We call the constant $\eta = \eta_{\nu}$ from Lemma \ref{lem.etaMarkovchains} the \emph{structure constant} of the ergodic central measure $\nu$.

\begin{proposition} \label{prop.firstdirection}
Let $\nu$ be an ergodic central measure on $(\Omega_{\Gamma^s},\cF_{\Gamma^s})$, let $t \in \partial \T^s$ be the end to which $\nu$-almost every path on $\cP(\T^s)$ converges and let $\eta \in [0,\frac{s^{s}}{(s+1)^{s+1}}]$ be the structure constant from the previous lemma. If $\eta < \frac{s^{s}}{(s+1)^{s+1}}$, then $\nu = \nu_{(t,\eta)}$. \\
Moreover, given an end $t$, there exists at most one ergodic central measure $\nu^*_{(t,\tfrac{s^{s}}{(s+1)^{s+1}})}$ with structure constant $\eta = \tfrac{s^{s}}{(s+1)^{s+1}}$ that converges to $t$. This measure is the pull back of (the law of) a random walk $S_{(t,\tfrac{s^{s}}{(s+1)^{s+1}})}$ and thus time-homogeneous.
\end{proposition}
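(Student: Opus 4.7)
My plan is to reduce the claim to verifying that $\nu$ has the transition probabilities prescribed by \eqref{eq.transprob} on every edge not lying on $t$; Lemma \ref{lem.welldefinedwalk} then forces the transition probabilities on the edges of $t$ as well, so that $\nu = \nu_{(t,\eta)}$. Since all limits arising from the ergodic method will depend only on the gap $L := n-m$, time-homogeneity of $\nu$ and its identification with the pull-back of the random walk $S_{(t,\eta)}$ follow for free.

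Fix $v\notin t$ with parent $u=\mathrm{father}(v)$ and branching vertex $z:=s(v,t)$. By the ergodic method (Theorem \ref{thm.ergodicmethod}) combined with the path-walk identification (Lemma \ref{lem.pathwalkident}),
\begin{align*}
p_\nu((m,v),(m+1,u)) \;=\; \lim_{n\to\infty}\frac{\Wk_{\T^s}(L-1,u,w_n)}{\Wk_{\T^s}(L,v,w_n)},\qquad L=n-m,
\end{align*}
along a $\nu$-typical path $(n,w_n)$. Since $w_n\to t$ by Lemma \ref{lem.convergingtoend}, every walk from $v$ or $u$ to $w_n$ must pass through $z$ for $n$ large. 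A first-passage decomposition, combined with the identity that the return generating function in the subtree rooted at any vertex $y$ of $\T^s$ equals $G_s(x^2)^{l(y)}$ (a direct consequence of Raney's recursion $G_s(\zeta)=\zeta G_s(\zeta)^{s+1}+1$ from Remark \ref{rem.generatingfunction}), yields
\begin{align*}
\Wk_{\T^s}(L,v,w_n) \;=\; \sum_{m\geq 0} g_m\,\Wk_{\T^s}(L-2m-1,u,w_n),\qquad g_m := [y^m]G_s(y)^{l(v)}.
\end{align*}

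The structure constant enters via $\lim_{L}\Wk_{\T^s}(L-2,u,w_n)/\Wk_{\T^s}(L,u,w_n) = \eta$, an identity I would derive by applying the ergodic method to the two directed edges $(u,u')$ and $(u',u)$ for any neighbour $u'$ of $u$ and multiplying, invoking Lemma \ref{lem.etaMarkovchains}. Iterating gives $\Wk_{\T^s}(L-2m-1,u,w_n)/\Wk_{\T^s}(L-1,u,w_n)\to\eta^m$, and together with a dominated-convergence argument using $\sum_m g_m(\eta+\epsilon)^m = G_s(\eta+\epsilon)^{l(v)}<\infty$ for some sufficiently small $\epsilon>0$ (available whenever $\eta<z_c:=s^s/(s+1)^{s+1}$), this yields
\begin{align*}
p_\nu(v,u) \;=\; \frac{1}{\sum_m g_m\eta^m} \;=\; G_s(\eta)^{-l(v)},
\end{align*}
which matches \eqref{eq.transprob}. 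Lemma \ref{lem.welldefinedwalk} then gives $\nu=\nu_{(t,\eta)}$.

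For the critical case $\eta=z_c$, the same strategy establishes uniqueness of $\nu^*_{(t,z_c)}$, but the dominated-convergence step is the main obstacle I anticipate: since $G_s$ has its singularity exactly at $z_c$, no $\epsilon$-margin is available, and one must instead combine the finiteness $G_s(z_c) = (s+1)/s$ (Remark \ref{rem.generatingfunction}(2)) with a Fatou/truncation argument, or else approximate $\nu$ by $\nu_{(t,\eta)}$ with $\eta\nearrow z_c$ and invoke continuity of the transition probabilities in the structure constant.
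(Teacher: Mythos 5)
Your proposal follows essentially the same route as the paper's proof: the first-passage decomposition of a walk from the off-$t$ vertex into a loop in its subtree, a step to the parent, and an arbitrary remainder (with the loop count given by the coefficients of $G_s(\cdot)^{l(v)}$), the ergodic-method identity $\Wk(L-2,\cdot,w_n)/\Wk(L,\cdot,w_n)\to\eta$ that you iterate, and the appeal to Lemma~\ref{lem.welldefinedwalk} to propagate the transition probabilities onto the edges of $t$ by induction. The only cosmetic difference is that you compute $p_\nu(v,u)$ (child$\to$parent, $=G_s(\eta)^{-l(v)}$) whereas the paper computes $p_\nu(u,v)$ (parent$\to$child, $=\eta\,G_s(\eta)^{l(w)}$); by Lemma~\ref{lem.etaMarkovchains} these determine each other. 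You are right to flag the interchange of limits as the delicate step: the paper resolves it by invoking uniform convergence of the power series on $[0,\,s^s/(s+1)^{s+1}]$, which is the same observation as your dominated-convergence remark; and for the critical value your suggestion of approximating $\eta\nearrow s^s/(s+1)^{s+1}$ is precisely what the paper later does in the proof of Theorem~\ref{thm.classificationFC} (using weak closedness of the set of ergodic measures), whereas the Proposition's own proof stops short at showing the critical-case limit defines \emph{some} time-homogeneous walk $S^*_{(t,\cdot)}$, deferring its identification with $\nu_{(t,\cdot)}$.
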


\begin{proof}
If $\eta =0$, it follows inductively that in every step $\pi(\nu)$ jumps to a descendent of its current location with probability one and thus converges deterministically to $t$.  Hence in this case $\nu = \nu_{(t,0)}$.

Therefore assume now $\eta >0$. Let $(n,w_n)_{n \geq 0}$ be a $\nu$-typical path, and let $((k,v),(k+1,w))$ be an edge on $\cP(\T^s)$ with $|w| = |v|+1$. Arguing as in Lemma \ref{lem.convergingtoend}, by the ergodic method
\begin{align*}
\frac{\Wk(n-k-2,v,w_n)}{\Wk(n-k,v,w_n)} &= \frac{\Wk(n-k-2,v,w_n)}{\Wk(n-k-1,w,w_n)} \frac{\Wk(n-k-1,w,w_n)}{\Wk(n-k,v,w_n)} \\
&= \frac{\dim_{\cP(\T^s)}((k+2,v),(n,w_n))}{\dim_{\cP(\T^s)}((k+1,w),(n,w_n))} \cdot \frac{\dim_{\cP(\T^s)}((k+1,w),(n,w_n))}{\dim_{\cP(\T^s)}((k,v),(n,w_n))}\\
&\to \quad p_{\nu}((k+1,w),(k+2,v)) \cdot p_{\nu}((k,v),(k+1,w)) = \eta
\end{align*}
as $n \to \infty$. More generally, for fixed $k,l > 0$
\begin{align*}
\frac{\Wk(n-k-2l,v,w_n)}{\Wk(n-k,v,w_n)} = \prod_{i=1}^l \frac{\Wk(n-k-2i,w,w_n)}{\Wk(n-k-2(i-1),w,w_n)} \quad \to \quad \eta^l.
\end{align*}
Assume now that $(w,v)$ is $t$-directed and does not lie on $t$. Note that this automatically implies $|w| = |v|+1$. Since $(w_n)$ converges to $t$, every walk from $w$ to $w_n$ must pass through $v$ at some point for large enough $n$.  Hence any walk from from $w$ to $w_n$ (for large $n$) of length $m$ must split into a downward loop of length $2l$ rooted at $w$ and a walk of length $m-2l-1$ from $v$ to $w_n$. Since the number of downward loops of length $2l$ rooted at $w$ is $C^{l(w)}_l$, it follows that
\begin{align*}
\frac{\dim_{\cP(\T^s)}((k+1,w),(n,w_n))}{\dim_{\cP(\T^s)}((k,v),(n,w_n))} &= \frac{\Wk(n-k-1,w,w_n)}{\Wk(n-k,v,w_n)} \\
&= \sum_{l=0}^{\tfrac{n-k-d(w,w_n)}{2}} C^{l(w)}_l \frac{\Wk(n-k-2(l+1),v,w_n)}{\Wk(n-k,v,w_n)}.
\end{align*}
On the one hand by the ergodic method this expression converges to $p_{\nu}((k,v),(k+1,w))$, on the other hand by Lemma \ref{lem.convergingtoend} $\tfrac{n-k-d(w,w_n)}{2} \to \infty$ as $n$ goes to infinity.
If $\eta < \tfrac{s^{s}}{(s+1)^{s+1}}$, then we can make use of the uniform convergence of the power series $\sum_{l=0}^{L} C^{l(w)}_l z^l$ to $G_s(z)^{l(w)}$ on $[0,\frac{s^{s}}{(s+1)^{s+1}}]$, to conclude that the right hand side converges to $\eta G_s(\eta)^{l(w)}$, so that
\begin{align*}
p_{\nu}((k,v),(k+1,w)) = \eta G_s(\eta)^{l(w)}
\end{align*}
for all $k \geq |v|$. By Lemma \ref{lem.etaMarkovchains}, also
\begin{align*}
p_{\nu}((k,w),(k+1,v)) = G_s(\eta)^{-l(w)}
\end{align*}
for $k \geq |w|$. Note that since either $(v,w)$ or $(w,v)$ must be $t$-directed, we have determined all transition probabilities outside of the end $t= (t_j)_{j \geq 0}$. Arguing as in Lemma \ref{lem.welldefinedwalk}, it follows by induction over $j$ that the transition probabilities $p_{\nu}((k,t_j),(k+1,t_{j \pm 1}))$ are uniquely determined once the ones outside of $t$ are given and that they do not depend on $k$. Hence all transition probabilities coincide with those of the random walk $S_{(t,\eta)}$, whence $\nu = \nu_{(t,\eta)}$.
Note that if $\eta = \tfrac{s^{s}}{(s+1)^{s+1}}$, our argument also shows that 
\begin{align*}
p_{\nu}((k,v),(k+1,w)) = p_{\nu}(v,w) = \lim_{n\to \infty} \sum_{l=0}^{\tfrac{n-k-d(w,w_n)}{2}} C^{l(w)}_l c_n^l > 0,
\end{align*}
where $c_n = \frac{\Wk(n-2,v,w_n)}{\Wk(n,v,w_n)}$ converges to $\eta$ and $(w,v)$ is $t$-directed. Let $S^*_{(t,\tfrac{s^{s}}{(s+1)^{s+1}})}$ be the random walk with transition probabilities $p_{\nu}(v,w) $. Repeating the last part of the argument, it follows that $\nu = \nu^*_{(t,\tfrac{s^{s}}{(s+1)^{s+1}})}$ is the pullback of the law of $S^*_{(t,\tfrac{s^{s}}{(s+1)^{s+1}})}$ and is thus time-homogeneous with structure constant $\tfrac{s^{s}}{(s+1)^{s+1}}$.
\end{proof}

\subsubsection{The random walks $S_{(t,\eta)}$} \label{subsubsec.randomwalks}

In this section, we will analyse the random walks $S_{(t,\eta)}$ on the Fibonacci tree $\FT = \T^2$. In order to do so,  for $\eta \in (0,4/27]$, let us introduce an auxillary random walk $S^{\eta}$ on the derooted Fibonacci tree $\widetilde{\FT}$. If $x_i, i=1,2$ is the unique first level vertex of $\widetilde{\FT}$ with $l(x_i) = i$, we define the transition probabilities from $\emptyset$ to $x_i$ by $p_{\eta}(\emptyset,x_1) = \tfrac{\eta G_(\eta)}{A(\eta)} = \tfrac{G_(\eta)}{G(\eta)+G(\eta)^2}$ and $p_{\eta}(\emptyset,x_2) = \tfrac{\eta G_(\eta)^2}{A(\eta)} = \tfrac{G_(\eta)^2}{G(\eta)+G(\eta)^2}$ where $A(\eta) = \eta(G(\eta)+G(\eta)^2)$. The remaining transition probabilities are defined by
\begin{align} \label{eq.auxilliaryRW}
p_{\eta}(v,w) = \begin{cases} G(\eta)^{-l(v)} \quad &\text{if } |v| = |w| +1 \\ \eta \cdot G(\eta)^{l(w)} \quad &\text{if } |v| = |w| -1  \end{cases},
\end{align}
where $G(z) = G_2(z)$ is the generating function for the $2$-Fuss-Catalan numbers.
\begin{lemma} \label{lem.recaux}
The random walk $S^{\eta}$ on the derooted Fibonacci tree is recurrent.
\end{lemma}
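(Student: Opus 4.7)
The plan is to show that for every non-root vertex of $\widetilde{\FT}$ the walk $S^{\eta}$ returns to its parent with probability one; combined with the specific choice of root transition probabilities this yields recurrence of $S^{\eta}$. For a non-root vertex $v$ with label $l \in \{1,2\}$ let $q_l$ denote this return probability. Conditioning on the first step, and using that from any descendant of $v$ the walk must pass through $v$ to reach its parent, I would derive the fixed-point system
\begin{align*}
q_1 &= G^{-1} + \eta G^2 \, q_1 q_2, \\
q_2 &= G^{-2} + \eta G \, q_1 q_2 + \eta G^2 \, q_2^2,
\end{align*}
writing $G = G(\eta)$ throughout; this uses that a label-$1$ vertex has one child (of label $2$) while a label-$2$ vertex has two children (of labels $1$ and $2$).

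Using the functional equation $\eta G^3 = G - 1$, equivalently $\eta G^2 = 1 - G^{-1}$ and $\eta G = G^{-1} - G^{-2}$, a direct substitution confirms that $(q_1, q_2) = (1,1)$ solves the system. The core of the proof is then to show that $(1,1)$ is the unique solution in $[0,1]^2$, which would force $q_1 = q_2 = 1$. To this end I would solve the first equation for $q_1 = G^{-1}/(1 - \eta G^2 q_2)$, well-defined on $[0,1]$ since $\eta G^2 < 1$, substitute into the second equation, and clear denominators to arrive at the cubic
\begin{align*}
\eta^2 G^4 q_2^3 - 2 \eta G^2 q_2^2 + q_2 - G^{-2} = 0.
\end{align*}
As $q_2 = 1$ is already a root, polynomial division factors this as $(q_2 - 1)\bigl(\eta^2 G^4 q_2^2 + (G^{-2}-1) q_2 + G^{-2}\bigr)$.

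The main obstacle is then the discriminant analysis of the remaining quadratic: a computation using $\eta G^2 = 1 - G^{-1}$ yields discriminant $(G-1)^3(G+3)/G^4 \geq 0$ and smaller quadratic root
\begin{align*}
\beta_- = \frac{G + 1 - \sqrt{(G-1)(G+3)}}{2(G-1)}.
\end{align*}
The inequality $\beta_- \geq 1$ reduces, after rearrangement and squaring (valid since $G \leq 3$), to $G \leq 3/2$, which holds precisely on the range $\eta \in (0, 4/27]$ by monotonicity of $G(\eta)$ and the identity $G(4/27) = 3/2$ (itself immediate from $G = \eta G^3 + 1$). In our range the quadratic therefore has no root in $[0, 1)$, so $(1,1)$ is the unique fixed point of the system in $[0,1]^2$ and $q_1 = q_2 = 1$.

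Once this is established, the probability that $S^{\eta}$ returns to $\emptyset$ equals $p_\eta(\emptyset, x_1) q_1 + p_\eta(\emptyset, x_2) q_2 = p_\eta(\emptyset, x_1) + p_\eta(\emptyset, x_2) = 1$, so by the strong Markov property $S^{\eta}$ is recurrent. The critical value $\eta = 4/27$ is exactly the radius of convergence of $G(z)$, and for $\eta > 4/27$ the root $\beta_-$ would enter $[0, 1)$, creating a smaller fixed point in $[0,1]^2$ and rendering the walk transient; in this sense $\eta = 4/27$ is precisely the phase transition point between the recurrent regime at hand and the transient regime of the walks $S_{(t,\eta)}$ studied in the rest of the section.
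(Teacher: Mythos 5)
Your proposal is correct, but it takes a genuinely different route from the paper's proof. The paper establishes recurrence via the Green's function criterion: it shows $\sum_{n\geq 1}\bP(S^\eta_{2n}=\emptyset)=\infty$ by explicitly counting rooted loops according to how many times they cross the level-$0$/level-$1$ edges, expressing the count via coefficients of powers of $G$, and reducing the double sum to a geometric series with ratio $\tfrac{\eta}{A(\eta)}(G(\eta)^2+G(\eta))=1$. You instead use the classical return-probability criterion: you derive the first-step fixed-point system for the return-to-parent probabilities $(q_1,q_2)$, observe that $(1,1)$ is a solution via $\eta G^3=G-1$, and then show uniqueness in $[0,1]^2$ by eliminating $q_1$, factoring out the root $q_2=1$ from the resulting cubic, and bounding the roots of the remaining quadratic below by $1$ via a discriminant computation reduced to $G\leq 3/2$ (which pins down $\eta\leq 4/27$ exactly). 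Both arguments lean heavily on the functional equation $G=\eta G^3+1$; your version is perhaps the more standard probabilistic route for trees and has the virtue of making the phase transition at $\eta=4/27$ visible as the point where $\beta_-$ enters $[0,1)$, whereas the paper's version is closer in spirit to the walk-counting machinery used throughout the rest of Section~\ref{sec.deFintheorem}. One small point worth making explicit in your write-up: to conclude $q_1=q_2=1$ from uniqueness of the fixed point in $[0,1]^2$, you should state that the actual return probabilities are automatically in $[0,1]^2$ and satisfy the system by first-step decomposition (you do not need, though you could also invoke, the minimal-solution characterization of hitting probabilities). You should also note that for $q_2\in[0,1]$ the formula $q_1=G^{-1}/(1-\eta G^2 q_2)$ automatically lands in $[G^{-1},1]\subset[0,1]$, so solutions of the system in $[0,1]^2$ are genuinely in bijection with roots of your cubic in $[0,1]$; this closes the uniqueness argument cleanly.
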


\begin{proof}
We will prove that $\sum_{n=1}^{\infty} \bP(S^{\eta}_{2n} = \emptyset) = \infty $. First let
\begin{align*}
D_{n,k} := |\{\text{loops of length 2n crossing the edges betw. level 0 and 1 exactly k times} \}|.
\end{align*}
By definition of $S_{\eta}$, we have
\begin{align*}
\bP(S^{\eta}_{2n} = \emptyset) = \sum_{k=1}^n \eta^n A(\eta)^{-k} D_{n,k}.
\end{align*}
Fix a vector $(i_1,\dots,i_k) \in \{1,2\}^k$. and write $e_1 = (\emptyset,x_1), e_1 = (\emptyset,x_1)$. Then the number of loops of length $2n$ crossing the edges $e_1$ and $e_2$ in the order $e_{i_1},e_{i_2},\dots, e_{i_k}$ is 
\begin{align*}
\sum_{n_1+\dots +n_k=n-k} C_{n_1}^{i_1} \dots C_{n_k}^{i_k}
\end{align*}
which is the coefficient of $z^{n-k}$ in $G(z)^{\sum_{j=1}^k i_j}$. Therefore 
\begin{align*}
D_{n,k} = \sum_{(i_1,\dots,i_k) \in \{1,2\}^k} [z^{n-k}]G(z)^{\sum_{j=1}^k i_j} = \sum_{l=0}^{k} \binom{k}{l} [z^{n-k}]G(z)^{2k-l}.
\end{align*}
Since $\sum_{n=0}^{\infty} \eta^n < \infty$, the sum $\sum_{n=1}^{\infty} \bP(S^{\eta}_{2n}= \emptyset)$ is infinite if and only if 
\begin{align*}
\sum_{n=0}^{\infty} \sum_{k=0}^n \sum_{l=0}^k \eta^n A(\eta)^{-k}  \binom{k}{l} [z^{n-k}]G(z)^{2k-l} = \infty.
\end{align*}
Reparametrising this sum yields
\begin{align*}
\sum_{l=0}^{\infty} \sum_{k=l}^{\infty} A(\eta)^{-k} \binom{k}{l} \eta^{k} \sum_{n=k}^{\infty} \eta^{n-k}  [z^{n-k}]G(z)^{2k-l} &= \sum_{l=0}^{\infty} \sum_{k=l}^{\infty} \left(\frac{\eta}{A(\eta)}\right)^k \binom{k}{l} G(\eta)^{2k-l} \\
&= \sum_{k=0}^{\infty} \left(\frac{\eta}{A(\eta)}\right)^k G(\eta)^{2k} \sum_{l=0}^{k} \binom{k}{l} G(\eta)^{-l} \\
&= \sum_{k=0}^{\infty} \left(\frac{\eta}{A(\eta)} (G(\eta)^2 + G(\eta)) \right)^k
\end{align*}
Since $\frac{\eta}{A(\eta)} (G(\eta)^2 + G(\eta)) = 1$, the result follows.
\end{proof}

\begin{corollary} \label{cor.convergenceRW}
The random walk $S_{(t,\eta)}, \ t \in \partial \FT, \ \eta \in [0,4/27]$ converges to $t$ almost surely.
\end{corollary}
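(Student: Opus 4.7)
For $\eta = 0$, the statement is immediate: formula \eqref{eq.transprob} gives $p_{(t,0)}(v,w) = 0$ for every edge $(v,w)$ that is not $t$-directed, so $S_{(t,0)}$ walks deterministically from $t_k$ to $t_{k+1}$ at every step and thus trivially converges to $t$.

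For $\eta \in (0, 4/27]$, the plan is to reduce the question to the recurrence of the auxiliary walk $S^\eta$ on $\widetilde{\FT}$ established in Lemma \ref{lem.recaux}. The central observation is that for any off-end child $v$ of a vertex $t_k$ on $t$, the restriction of $S_{(t,\eta)}$ to the subtree $T_v$ rooted at $v$ has transition probabilities matching those of $S^\eta$ on an isomorphic subtree of $\widetilde{\FT}$: directly comparing \eqref{eq.transprob} with \eqref{eq.auxilliaryRW}, both depend only on $\eta$, $l(v)$, and the direction of the edge. I would use this to show that each excursion of $S_{(t,\eta)}$ into $T_v$ almost surely returns to $t_k$ in finite time: recurrence of $S^\eta$ forces the walk inside $T_v$ to return to $v$ infinitely often on the event of not yet having exited, while the exit probability $p_{(t,\eta)}(v, t_k) = G(\eta)^{-l(v)} > 0$ at every such return gives almost sure exit in finitely many trials by a standard Borel--Cantelli argument.

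Once excursions are known to be almost surely finite, I would analyse the induced Markov chain $(Y_n)$ obtained by sampling $S_{(t,\eta)}$ at its visits to the end $t$. This is a nearest-neighbour random walk on $\mathbb{N} \cong \{t_0, t_1, \dots\}$ whose transition probabilities $p_k^\pm$ can be computed from the on-$t$ transition probabilities uniquely determined in Lemma \ref{lem.welldefinedwalk} together with the excursion return probabilities obtained above. Convergence of $S_{(t,\eta)}$ to $t$ in the sense of Definition \ref{def.convergetoend} is then equivalent to $Y_n \to +\infty$ almost surely. This last step I would verify by exploiting the reversibility structure $p(v,w)p(w,v) = \eta$ from Proposition \ref{prop.randomwalkcentral} to exhibit an explicit reversible measure on $\FT$ and translate the question into an effective-resistance inequality, or alternatively by a direct Lyapunov function argument in the label-sum coordinate $r(t_k)$.

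The main obstacle is the quantitative control of the induced walk $Y$: while each excursion is almost surely finite, obtaining $Y_n \to +\infty$ requires a uniform positive drift away from the root, and the probabilities $p_k^\pm$ depend on $l(t_k)$ and $l(t_{k+1})$ which vary along the end. A potentially cleaner route would bypass the induced walk entirely: since $S_{(t,\eta)}$ is central by Lemma \ref{lem.welldefinedwalk}, if one separately verifies that $\nu_{(t,\eta)}$ is ergodic, then Lemma \ref{lem.convergingtoend} already yields convergence to \emph{some} end, and the manifest asymmetry of the transition probabilities in \eqref{eq.transprob} — which uniformly favour the $t$-directed edges over their reverses — pins the limit end down to $t$ itself.
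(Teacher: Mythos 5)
Your first step --- comparing the transition probabilities of $S_{(t,\eta)}$ inside an off-end subtree with those of the auxiliary walk $S^\eta$ of Lemma \ref{lem.recaux}, and using recurrence of $S^\eta$ to force a.s.\ return to the end --- is exactly the right mechanism and is the same one the paper uses. The gap is in what comes afterward. You propose to pass to the induced walk $(Y_n)$ on the ray $t \cong \N$ and to prove $Y_n \to \infty$, and you flag yourself that you cannot control the drift uniformly in $k$ because $l(t_k)$ oscillates. This whole layer is unnecessary, and the paper sidesteps it by observing at the very start that $S_{(t,\eta)}$ itself is \emph{transient}: by centrality every rooted loop of length $2n$ has probability $\eta^n$, so $\sum_n \bP(S_{2n}=\emptyset) = \sum_n C_n^2 \eta^n = G(\eta) < \infty$ for $\eta \leq 4/27$. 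A transient nearest-neighbour walk on a locally finite tree converges a.s.\ to some end; if that end were not $t$, the walk would eventually be trapped forever in a subtree $T_{v}$ rooted off $t$ with $l(v)=2$, and your excursion argument (matching with $S^\eta$ inside $T_v$, recurrence, positive exit probability $G(\eta)^{-2}$ at each return to $v$) rules this out. No induced-walk or Lyapunov analysis is needed.

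Your ``cleaner route'' at the end is circular: ergodicity of $\nu_{(t,\eta)}$ is precisely what Theorem \ref{thm.classificationFC} establishes, and its proof invokes Corollary \ref{cor.convergenceRW} and Theorem \ref{thm.llnexit}. You cannot assume it here. Also, once you have convergence to \emph{some} end, the statement that ``asymmetry of the transition probabilities pins down $t$'' is not an argument by itself; what actually pins it down is the excursion/recurrence contradiction you already have in hand.
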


\begin{proof}
Because $\eta \leq 4/27$, $S_{(t,\eta)}$ is transient. If $\eta = 0$, $S_{(t,0)}$ converges deterministically to $t$. Therefore let $\eta > 0$. Assume that the event 
\[A = \{ S_{(t,\eta)} \text{ does not converge to } t \} \]
has positive probability and denote by $d(t_k)$ the unique son of $t_k$ that does not lie on $t$ for $k \geq 0, \ l(t_k) = 2$. Then, for every trajectory $\omega = (\omega_n)_{n \geq 0} \in A$, there exists a time $N(\omega)$ and a vertex $v(\omega), \ l(v(\omega))=2$ such that subtree rooted at $v(\omega)$ does not contain any vertex of $t$ and such that $\omega_n$ lies on this subtree for all $n \geq N(\omega)$. But then, the transition probabilities of the random walk $S_{(t,\eta)}$ conditioned on $n \geq N$ coincide with those of $S$. Since $S$ is recurrent by Lemma \ref{lem.recaux} this contradicts the transience of $S_{(t,\eta)}$.
\end{proof}

\subsubsection{ A law of large numbers for exit times} \label{subsubsec.LLN}

In this section, we prove the converse of Proposition \ref{prop.firstdirection} when $s=2$, i.e. the fact that the measures $\nu_{(t,\eta)}$ are ergodic. To do so, we prove a law of large numbers for normalized \emph{exit times} for the Fibonacci tree $\FT = \T^2$.

\begin{definition} \label{def.exittime}
Let $S = (S_n)_{n \geq 0}$ be a transient random walk on $\FT$ converging to an end $t = (t_k)_{k\geq 0}$. The \emph{exit time} at $t_k$ is the random variable $N_k$ defined as the last moment of passage at $t_k$, that is to say the unique nonnegative integer such that $S_{N_k} = t_k$ and $|S_m| > k$ for all $m \geq N_k$. Since $S$ converges to $t$, $N_k < \infty$ is almost surely well-defined. 

\end{definition}

Let us recall the statement of Theorem \ref{thmB}, now that we have clarified our notation. 
 
\begin{theorem}[Theorem \ref{thmB}] \label{thm.llnexit}
Let $t= (t_k)_{k \geq 0} \in \partial \FT$ an end and let $\eta \in [0,4/27]$. Denote by $N_k$ the exit time at $t_k$ for the random walk $S_{(t,\eta)}$. Then the limit 
\begin{align*}
f(\eta) = \lim_{k \to \infty} \frac{N_k -k}{r(t_k)} \in [0,\infty]
\end{align*}
exists (and is almost surely constant). It is finite if and only $\eta < 4/27$ and the function $f: [0,4/27) \to [0,+\infty), \ \eta \mapsto f(\eta)$ is injective.
\end{theorem}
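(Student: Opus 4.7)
The plan is to decompose $N_k - k$ according to level-by-level excursions of $S_{(t,\eta)}$, use the tree structure and strong Markov property to obtain independence across levels, and then apply a strong law of large numbers. The key algebraic input is the functional equation $G(z) = 1 + z G(z)^3$ for the generating function $G = G_2$ of the $2$-Fuss-Catalan numbers, which pinpoints the critical value $\eta = 4/27$ via $G(4/27) = 3/2$.

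For each $j \geq 1$, let $\alpha_j$ be the total number of transitions $t_j \to t_{j-1}$ ever performed by $S_{(t,\eta)}$, and let $\beta_j$ be the total number of excursions of $S_{(t,\eta)}$ from $t_j$ into the subtree $T_{d(t_j)}$ rooted at the non-$t$ son of $t_j$ (so $\beta_j = 0$ when $l(t_j) = 1$); denote by $R_j^{(1)}, \ldots, R_j^{(\beta_j)}$ the corresponding excursion lengths, boundary steps included. A direct bookkeeping of every edge crossing of $S_{(t,\eta)}$ up to time $N_k$, sorted by edge type, yields
\[
N_k - k \;=\; 2 \sum_{j=1}^{k} \alpha_j \;+\; \sum_{j=1}^{k} \sum_{i=1}^{\beta_j} R_j^{(i)} \;+\; E_k,
\]
where $E_k$ collects the steps performed strictly above level $k$. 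The strong Markov property at each visit to $t_j$ together with the vertex-disjointness of the subtrees $T_{d(t_j)}$ makes the pairs $(\beta_j, (R_j^{(i)})_i)$ independent across $j$ with distributions depending only on $\eta$, $l(t_j)$, and $l(d(t_j))$; the $\alpha_j$ decouple analogously after passing to the skeleton walk on the vertices of $t$.

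Let $\sigma_\ell(\eta)$ denote the expected first passage time to the parent from a vertex of label $\ell$ inside a side subtree, for the walk $S_{(t,\eta)}$. Conditioning on the first step and inserting the identity $G(\eta)^\ell = 1 + \sum_{c} \eta G(\eta)^{\ell + l(c)}$ from the proof of Lemma \ref{lem.welldefinedwalk} yields the linear system
\[
\sigma_\ell(\eta) \;=\; 1 + \sum_{c} \eta G(\eta)^{\ell + l(c)} \bigl( 1 + \sigma_{l(c)}(\eta) + \sigma_\ell(\eta) \bigr), \qquad \ell \in \{1,2\},
\]
which is explicitly solvable and whose solutions are finite precisely when $G(\eta) < 3/2$, i.e.\ when $\eta < 4/27$. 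Combining the $\sigma_\ell$'s with the escape probability at $t_j$ (derived from an analogous fixed-point equation for the return probability along $t$) gives closed forms for $\E[\alpha_j]$ and $\E[\beta_j]$. The crux is the algebraic identity, enforced by repeated use of $G = 1 + z G^3$, that the expected level-$j$ contribution $2\E[\alpha_j] + \E[\beta_j](1 + \sigma_{l(d(t_j))}(\eta))$ factorizes as $l(t_j) \cdot f(\eta)$ for a single function $f$ of $\eta$ alone. This proportionality is precisely what matches the normalization $r(t_k) = \sum_{i=1}^k l(t_i)$ and makes the almost sure limit independent of the particular end $t$.

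Granted the proportionality, the centered summands $Z_j := 2\alpha_j + \sum_i R_j^{(i)} - l(t_j) f(\eta)$ are independent, mean zero, and have uniformly bounded variances (since $l(t_j) \in \{1,2\}$ and $\eta < 4/27$), so Kolmogorov's strong law yields $\sum_{j \leq k} Z_j / r(t_k) \to 0$ almost surely; transience of $S_{(t,\eta)}$ (Lemma \ref{lem.transient}) forces $E_k / r(t_k) \to 0$ a.s.\ as well, whence $(N_k - k)/r(t_k) \to f(\eta)$ a.s. For $\eta = 4/27$, the blow-up $\sigma_\ell(4/27) = \infty$, consistent with the recurrence of the auxiliary walk $S^\eta$ in Lemma \ref{lem.recaux}, gives $f(4/27) = +\infty$. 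Finally, $f(0) = 0$ is immediate from Corollary \ref{cor.convergenceRW}, and strict monotonicity of $f$ on $[0, 4/27)$ follows from a pathwise coupling of $S_{(t, \eta_1)}$ and $S_{(t, \eta_2)}$ for $\eta_1 < \eta_2$, combined with the closed-form expression obtained above. The main obstacle is the algebraic factorization $\E[\text{level-}j\text{ contribution}] = l(t_j) f(\eta)$, which depends on a delicate cancellation driven by $G = 1 + zG^3$ and is what reduces the $t$-dependent local data to a single function of $\eta$.
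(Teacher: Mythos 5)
Your overall strategy---slicing $N_k - k$ by levels, using the functional equation $G = 1 + zG^3$ to produce a level-independent constant, and closing with a strong law of large numbers---is the same as the paper's. However, the specific decomposition you chose introduces a dependence problem that is a genuine gap. In your expression $N_k - k = 2\sum_j \alpha_j + \sum_j \sum_i R_j^{(i)} + E_k$, the edge-crossing numbers $\alpha_j$ are \emph{not} independent across $j$: the number of visits $V_j$ to $t_j$ satisfies the counting identity $V_j = \alpha_j + \alpha_{j+1} + \beta_j + 1$, so $\alpha_j$ and $\alpha_{j+1}$ are correlated through the occupation time of $t_j$ (this is the familiar Ray--Knight/branching structure of nearest-neighbour edge crossings). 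Consequently your centered summands $Z_j$ are not independent, and Kolmogorov's strong law does not apply as stated; one would need an SLLN adapted to this Markovian dependence, which you do not supply.

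The paper's proof sidesteps this entirely by decomposing $N_k - k = \sum_{l=1}^k (N_l - N_{l-1} - 1)$, where $N_l - N_{l-1} - 1$ is exactly the length of the loop the walk performs at $t_l$ inside the subtree rooted at $t_l$ before exiting for good. Since every such loop of length $2n$ carries probability $\eta^n$ (by centrality) and there are $C_n^{l(t_l)}$ of them, the increment has the explicit law $\bP[Y_\eta^{(j)} = 2n] = C_n^j \eta^n / G^j(\eta)$ with $j = l(t_l)$, and a direct factorization of the joint law shows the increments are genuinely independent. The proportionality you flag as the ``main obstacle'' is then a two-line computation, $\E[Y_\eta^{(j)}] = 2\eta(G^j)'(\eta)/G^j(\eta) = j \cdot 2\eta G'(\eta)/G(\eta)$, giving the closed form $f(\eta) = 4\eta G'(\eta)/G(\eta)$ directly; in particular the pathwise coupling you propose for injectivity is unnecessary, since differentiating $f$ and using $G = 1 + zG^3$ already gives $f' > 0$. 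Your $\sigma_\ell$-system and $E_k$-bookkeeping can likely be made rigorous, but they replace the one clean independent sequence that makes the strong law work with several entangled ones.
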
 

\begin{proof}
Let us first define discrete random variables $Y_{\eta}^{(1)}, Y_{\eta}^{(2)}$ on $2\N$ that are distributed according to
\begin{align*}
\bP[ Y_{\eta}^{(j)} = 2n] = \frac{C_n^{j} \cdot \eta^n}{G^{j}(\eta)}.
\end{align*}
Note that  $\E[Y_{\eta}^{(j)}] = \frac{2 \eta (G^{j})'(\eta)}{G^j(\eta)}$ when $\eta \in [0,4/27)$ and $\E[Y_{4/27}^{(j)}] = \infty$.
We observe that with probability one, between $N_{k-1} +1$ and $N_k$, the random walk $S_{(t,\eta)}$ performs a loop starting and ending at $t_k$ that always stays strictly below $t_{k-1}$. Since the number of such loops of length $2n$ is $C_n^{l(t_k)}$ and since every loop of length $2n$ is taken by $S_{(t,\eta)}$ with probability $\eta^n$, it follows that 
\[N_k - N_{k-1} -1 \sim Y_{\eta}^{(l(t_k))}. \] 
Moreover, thanks to the Markov property of the random walk $S_{(t,\eta)}$, the random variables $N_k - N_{k-1} -1, \ k \geq 1$ are independent. Write
\begin{align*}
F_j(t_k) = |\{ 0 \leq i \leq k \ ; \ l(t_i) =j  \}| \qquad \text{for } j=1,2. 
\end{align*}
Then $r(t_k) = F_1(t_k)+ 2 F_2(t_k)$ and $F_1(t_k) + F_2(t_k) = k+1$. Note that for all $k \geq 0$, $F_2(t_k) \geq k/2$ and $ k +1 \leq r(t_k) \leq 2k+1$. Let first $\eta = 4/27$. Then  
\begin{align*}
\frac{N_k -k}{r(t_k)} \geq \sum_{l=1 \atop l(t_l) = 2}^k (N_l - N_{l-1}-1) \sim \frac{F_2(t_k)}{r(t_k)} \frac{\sum_{l=1}^{F_2(t_k)} Y^{(2)}_l}{F_2(t_k)} \geq \frac{1}{3} \frac{\sum_{l=1}^{F_2(t_k)} Y^{(2)}_l}{F_2(t_k)}
\end{align*}
and by the law of large numbers the righthand side converges to $\infty$.
If $\eta < 4/27$, we write
\begin{align*}
N_k -k = \sum_{l=1}^k (N_l - N_{l-1}-1) &= \sum_{l=1 \atop l(t_l) = 1}^k (N_l - N_{l-1}-1) + \sum_{l=1 \atop l(t_l) = 2}^k (N_l - N_{l-1}-1) \\
&\sim \sum_{l=1}^{F_1(t_k)} Y^{(1)}_l + \sum_{l=1}^{F_1(t_k)} Y^{(2)}_l.
\end{align*}
Therefore 
\begin{align*}
\frac{N_k -k}{r(t_k)} \sim \frac{F_1(t_k)}{r(t_k)} \frac{\sum_{l=1}^{F_1(t_k)} Y^{(1)}_l}{F_1(t_k)} + \frac{F_2(t_k)}{r(t_k)} \frac{\sum_{l=1}^{F_2(t_k)} Y^{(2)}_l}{F_2(t_k)}.
\end{align*}
If $\sup_k F_1(t_k)< \infty$, by the law of large numbers for independent random variables, $\frac{N_k -k}{r(t_k)}$ converges a.s. to $f(\eta)=\tfrac{\E[Y^{(2)}_{\eta}]}{2} < \infty$. If $\sup_k F_1(t_k) = \infty$, we rewrite
\begin{align*}
\frac{N_k -k}{r(t_k)} \sim \frac{\sum_{l=1}^{F_1(t_k)} Y^{(1)}_l}{F_1(t_k)} + \frac{F_2(t_k)}{r(t_k)} \left( \frac{\sum_{l=1}^{F_2(t_k)} Y^{(2)}_l}{F_2(t_k)} -2 \frac{\sum_{l=1}^{F_1(t_k)} Y^{(1)}_l}{F_1(t_k)} \right)
\end{align*}
and we observe that
\begin{align*}
\E[Y^{(2)}_{\eta}]- 2 \E[Y^{(1)}_{\eta}] = \frac{2 \eta (G^{2})'(\eta)}{G^2(\eta)} - 2\frac{2 \eta G'(\eta)}{G(\eta)} = 0.
\end{align*} 
Hence, by the law of large numbers for independent random variables, $\tfrac{N_k -k}{r(t_k)} $ converges a.s. to $ \E[Y^{(1)}_{\eta}] = \tfrac{\E[Y^{(2)}_{\eta}]}{2} = f(\eta)$ as well.
We have established that 
\[ f(\eta) = \left(\frac{2z \tfrac{d}{dz}G^2(z)}{G^2(z)} \right) \bigg|_{z= \eta} = \frac{4\eta G'(\eta)}{G(\eta)}  \]
It  therefore remains to show that the function $f:[0,4/27) \to [0,+\infty), \ \eta \mapsto \frac{4\eta G'(\eta)}{G(\eta)}$ is injective. To prove this, we will once again recall that  the generating function $G$ satisfies 
\[ G(z) = z G^3(z) +1,  \]
so that 
\[ G'(z) = \frac{G^3(z)}{1-3zG^2(z)}. \]
Note that $ G(z) = z G^3(z) +1$, also implies that the denominator of the expression above is always nonzero.
Differentiating the function $f$, we see that
\[ f'(\eta) = \frac{G'(\eta)(G(\eta)-\eta G'(\eta))+\eta G''(\eta)}{G^2(\eta)} = \frac{G'(\eta)+\eta G''(\eta)}{G^2(\eta )} \]
using the formula for $G'(z)$ and $G(z) = z G^3(z) +1$ once more. This clearly implies that $f'(\eta)>0$ for all $\eta \in (0,4/27)$, so that $f$ is injective.
\end{proof}

\begin{remark}
Note that the previous theorem also holds for the random walk $S^*_{(t,4/27)}$, since we have only used the structure constant but no explicit transition probabilities in the proof.
\end{remark}

We are now ready to finish the proof of Theorem \ref{thmA}/Theorem \ref{thm.classificationFC}.

\begin{proof}[Proof of Theorem \ref{thm.classificationFC}]
Let $\nu$ be an ergodic central measure. By Lemmas \ref{lem.convergingtoend} and \ref{lem.etaMarkovchains}, $\nu$ converges to an end $t \in \partial \FT$ and has structure constant $\eta \in [0,4/27]$. If $\eta < 4/27$, then by Proposition \ref{prop.firstdirection}, $\nu = \nu_{(t,\eta)}$, if $\eta = 4/27$, then $\nu = \nu^*_{(t,4/27)} $. We need to prove that these measures are in fact ergodic and that $\nu^*_{(t,4/27)} = \nu_{(t,4/27)}$. If for given $t$, we show that $\nu_{(t,\eta)}$ is ergodic for $\eta < 4/27$, then $\nu_{(t,4/27)}$ is ergodic as well since the simplex of ergodic central measures is weakly closed and $\nu_{(t,\eta)} \to \nu_{(t,4/27)}$ as $\eta \to 4/27$. As there is at most one ergodic measures with structure constant $4/27$ converging to $t$, it follows that $\nu^*_{(t,4/27)} = \nu_{(t,4/27)}$.

Thus let us assume that $\eta < 4/27$. To show ergodicity of $\nu_{(t,\eta)}$, we argue as in \cite[Proof of Proposition 5.1]{VM15}. We already know that the set of ergodic measures is a subset of \begin{align*}
\tilde{\Sigma} := \{ \nu_{(t,\eta)} \ ; \ t \in \partial \FT, \ \eta \in [0,4/27) \} \cup \{\nu^*_{(t,4/27)}, \ t \in \partial \FT \}.
\end{align*}
Therefore, by ergodic decomposition, we know that $\nu_{(t,\eta)}$ can be decomposed as 
\[\nu_{(t,\eta)} = \int_{\tilde{\Sigma}} \xi \ d\rho(\xi) \]
for some probability measure $\rho$ on $\tilde{\Sigma}$. Let $c = \lim_{k \to \infty} \frac{N_k -k}{r(t_k)}$ the limit of Theorem \ref{thm.llnexit} for $\nu_{(t,\eta)}$. Again, using  Corollary \ref{cor.convergenceRW} and Theorem \ref{thm.llnexit}, it follows that,  if $\rho \neq \delta_{\nu_{(t,\eta)}}$ is not the Dirac measure at $\nu_{(t,\eta)}$, then the set of paths
\[A= \{ \omega \in \Omega_{\cP(\FT)}, \ \lim_{n \to \infty} w_n = t \text{ and } \lim_{k \to \infty} \tfrac{N_k(\omega) -k}{r(t_k)} = c  \} \]
has probability $\nu_{(t,\eta)}(A) < 1$, a contradiction. Thus, $\nu_{(t,\eta)}$ is ergodic. 
\end{proof}


\begin{thebibliography}{CCMT12}\setlength{\itemsep}{-1mm} \setlength{\parsep}{0mm} \small

\bibitem[Ai99]{Ai99} M. Aigner, Catalan-like numbers and determinants, \emph{J. Combin. Th., Series  A} \textbf{87} (1), 33--51 (1999).

\bibitem[Ai08]{Ai08} M. Aigner, Enumeration via ballot numbers, \emph{Discr. Math.} \textbf{308} (12),  2544--2563 (2008).

\bibitem[BBC07]{BBC07} T. Banica, J. Bichon and B. Collins, The hyperoctahedral quantum group. \emph{J. Ramanujan Math. Soc.} \textbf{22} (4), 345--384 (2007).

\bibitem[BS09]{BS09} T. Banica and R. Speicher, Liberation of orthogonal Lie groups, \emph{Adv. Math.} \textbf{222} (4), 1461--1501 (2009).

\bibitem[BH14]{BH14} G. Benkart and T. Halverson, Motzkin algebras, \emph{Eur. J. Comb.} \textbf{36}, 473--502 (2014).

\bibitem[Bn98]{Bn98} P. Biane, Representations of symmetric groups and free probability,
\emph{Adv. Math.} \textbf{138} (1998), 126--181. 

\bibitem[BiJo95]{BiJo95} D. Bisch and V.F.R. Jones, Algebras associated to intermediate subfactors, \emph{Invent. Math.} \textbf{128} (1997), 89--157.

\bibitem[BO98]{BO98} A. Borodin and G. Olshanski, Point processes and the infinite symmetric group, \emph{Math. Res. Lett.} \textbf{5} (1998), 799--816.

\bibitem[BO16]{BO16} A. Borodin and G. Olshanski, Representations of the infinite symmetric group, (Cambridge Studies in Advanced Mathematics). Cambridge: Cambridge University Press, 2016. 


\bibitem[FlP18]{FlP18} S. M. Flores and E. Peltola, Standard modules, radicals, and the valenced Temperley-Lieb algebra. \emph{preprint, arXiv:1801.10003} (2018).

\bibitem[FM20]{FM20} J. Flake and L. Maaßen, Semisimplicity and indecomposable objects in interpolating partition categories, \emph{preprint, arXiv:2003.13798} (2020).

\bibitem[FrWe16]{FrWe16} A. Freslon and M. Weber, On the representation theory of partition (easy) quantum groups, \emph{J. reine u. angew. Math. [Crelle's Journal]}, \emph{720} (2016).

\bibitem[Go12]{Go12} V. Gorin, The $q$-Gelfand-Tsetlin graph, Gibbs measures and $q$-Toeplitz matrices. \emph{Adv. Math.}  \textbf{229},  no. 1, 201--266 (2012).


\bibitem[GKP94]{GKP94} R. Graham, D. Knuth and O. Patashnik, Concrete Mathematics, Addison-Wesley Publishing Company, 2nd edition, 1994.

\bibitem[GHJ89]{GHJ89} F. Goodman, P. de la Harpe and V.F.R. Jones, Coxeter graphs and towers of algebras. \emph{Math. Sci. Res. Inst. Pub.} \textbf{14}. Springer-Verlag, New York,  1989.


\bibitem[HR05]{HR05} T. Halverson and A. Ram, Partition algebras, \emph{Europ. J. Comb.} \textbf{26} (6), 869--921 (2005).

\bibitem[Jo83]{Jo83} V.F.R. Jones, Index for subfactors, \emph{Inv. Math.} \textbf{72}, 1--25 (1983).

\bibitem[JS97]{JS97} V.F.R. Jones and V. Sunder, Introduction to Subfactors (London Mathematical Society Lecture Note Series), Cambridge University Press, Cambridge, 1997.

\bibitem[Kra15]{Kra15} C. Krattenthaler, Lattice path enumeration, in: \emph{Handbook Enumerative Combinatorics, M. Bóna (ed.), Discrete Math. and Its Appl.}, CRC Press, Boca Raton-London-New York, pp. 589-678, 2015. 

\bibitem[La01]{La01} Z.A. Landau, Fuss-Catalan algebras and chains of intermediate subfactors, \emph{Pac. J. Math.} \textbf{197}, 325--368 (2001).

\bibitem[LPW19]{LPW19} G. Lechner, U. Pennig and S. Wood,  Yang-Baxter representations of the infinite symmetric group, \emph{Adv. Math.} \textbf{355} (2019), 106769.

\bibitem[LT16]{LT16} F. Lemeux and P. Tarrago, Free wreath product quantum groups : the monoidal category, approximation property and free probability, \emph{J. Func. Anal.} \textbf{270} (2016), 3828--3883.




\bibitem[Me17]{Me17} P.L. M\'eliot, Representation Theory of Symmetric Groups. \emph{Discrete Mathematics and its Applications}, CRC Press, 2017.

\bibitem[MPZ13]{MPZ13}  W. Mlotkowski, K. A. Penson, and K. Zyczkowski, Densities of the Raney distributions, \emph{Documenta Mathematica} \textbf{18}, 1573--1596 (2013).


\bibitem[O03]{O03} G. Olshanski, Point Processes Related to the Infinite Symmetric Group, in: The Orbit Method in Geometry and Physics. Progress in Mathematics, vol 213. Birkhäuser, Boston, MA, 2003.


\bibitem[Ra12]{Ra12} S. Raum, Isomorphisms and fusion rules of orthogonal free quantum groups and their free complexifications, \emph{Proceedings of the AMS}, \textbf{140}, no. 9, 3207--3218 (2012). 

\bibitem[RaWe16]{RaWe16} S. Raum and M. Weber, The full classification of orthogonal easy quantum groups, \emph{Comm. Math. Phys.}, \textbf{341} (3), 751--779, (2016).

\bibitem[Sa19a]{Sa19a} R. Sato, Quantized Vershik-Kerov theory and quantized central measures on branching graphs , \emph{J. Func. Anal.} 2522--2557 (2019).

\bibitem[Sa19b]{Sa19b} R. Sato, Inductive limits of compact quantum groups and their unitary representations, \emph{preprint, arXiv:1908.03988} (2019). 

\bibitem[TW18]{TW18} P. Tarrago and J. Wahl, Free wreath product quantum groups and standard invariants of subfactors, \emph{Advances in Mathematics} \textbf{331} (2018), 1--57.

\bibitem[Th64]{Th64} E. Thoma, Die unzerlegbaren, positiv–definiten Klassenfunktionen der
abzählbar unendlichen symmetrischen Gruppe. \emph{Math. Zeitschr.}, \textbf{85}
(1964), 40--61.

\bibitem[VV19]{VV19} S. Vaes and M. Valvekens, Property (T) discrete quantum groups and subfactors with triangle presentations, \emph{Adv. Math.} \textbf{345}, 382--428 (2019).

\bibitem[VK81]{VK81} A. M. Vershik and S. V. Kerov, Characters and factor representations
of the infinite symmetric group. \emph{Dokl. Akad. Nauk SSSR} \textbf{257} (1981),1037--1040 (Russian); English translation in \emph{Soviet Math., Doklady} \textbf{23}
(1981), 389--392.

\bibitem[VK82]{VK82} A. M. Vershik and S. V. Kerov, Characters and factor representations
of the infinite unitary group. \emph{Doklady AN SSSR} \textbf{267} (1982), no. 2, 272--
276 (Russian); English translation: \emph{Soviet Math. Doklady} \textbf{26} (1982),
570--574.

\bibitem[VM15]{VM15} A. Vershik and A. Malyutin, Phase transition in the exit boundary problem for random walks on groups, \emph{Func. Ana. its Appl.}  \textbf{49}, 86--96 (2015).

\bibitem[VM18]{VM18} A. M. Vershik and A. Malyutin, The absolute of finitely generated groups: I. Com-mutative (semi) groups, \emph{European J. Math.}, \textbf{4} (2018), 1476--1490.

\bibitem[VN06]{VN06} A. M. Vershik and P. P. Nikitin, Traces on infinite-dimensional Brauer algebras, \emph{Funct. Anal. Its Appl.} \textbf{40}, 165--172 (2006).

\bibitem[Vi68]{Vi68} N. Vilenkin, Special functions and the theory of group representations, Transl. Math. Monographs, Amer. Math. Soc., Providence, R.I., 1968.

\bibitem[Wa20]{Wa20} J. Wahl, Traces on diagram algebras II. To appear.
 
\bibitem[Was81]{Was81} A. J. Wassermann, Automorphic actions of compact groups on operator algebras, \emph{PhD thesis, University of Pennsylvania}, 1981.

\bibitem[Wen88]{Wen88} H. Wenzl, On the structure of Brauer's centralizer algebras, \emph{Ann. Math.} \textbf{128}, 173--193 (1988).

\bibitem[We13]{We13} M. Weber, On the classification of easy quantum groups, Adv. Math. 245, 500--533 (2013).

\bibitem[W46]{W46} H. Weyl, The classical groups, Princeton University Press, Rev. 1946.

\bibitem[Wo88]{Wo88}
  S.L.~Woronowicz, Tannaka-Krein duality for compact matrix pseudogroups. Twisted SU(N) groups. \emph{Invent. Math.} \textbf{93}, 1 (1988), 35–76.

\end{thebibliography}
\end{document}